\begin{document}

\newcommand{\nc}{\newcommand}
\newcommand{\delete}[1]{}
\nc{\mfootnote}[1]{\footnote{#1}} 
\nc{\todo}[1]{\tred{To do:} #1}

\nc{\mlabel}[1]{\label{#1}}  
\nc{\mcite}[1]{\cite{#1}}  
\nc{\mref}[1]{\ref{#1}}  
\nc{\mbibitem}[1]{\bibitem{#1}} 

\delete{
\nc{\mlabel}[1]{\label{#1}  
{\hfill \hspace{1cm}{\bf{{\ }\hfill(#1)}}}}
\nc{\mcite}[1]{\cite{#1}{{\bf{{\ }(#1)}}}}  
\nc{\mref}[1]{\ref{#1}{{\bf{{\ }(#1)}}}}  
\nc{\mbibitem}[1]{\bibitem[\bf #1]{#1}} 
}

\newtheorem{theorem}{Theorem}[section]
\newtheorem{prop}[theorem]{Proposition}
\newtheorem{defn}[theorem]{Definition}
\newtheorem{lemma}[theorem]{Lemma}
\newtheorem{coro}[theorem]{Corollary}
\newtheorem{prop-def}[theorem]{Proposition-Definition}
\newtheorem{claim}{Claim}[section]
\newtheorem{remark}[theorem]{Remark}
\newtheorem{propprop}{Proposed Proposition}[section]
\newtheorem{conjecture}{Conjecture}
\newtheorem{exam}[theorem]{Example}
\newtheorem{assumption}{Assumption}
\newtheorem{condition}[theorem]{Assumption}
\newtheorem{question}[theorem]{Question}

\renewcommand{\labelenumi}{{\rm(\alph{enumi})}}
\renewcommand{\theenumi}{\alph{enumi}}

\nc{\tred}[1]{\textcolor{red}{#1}}
\nc{\tblue}[1]{\textcolor{blue}{#1}}
\nc{\tgreen}[1]{\textcolor{green}{#1}}
\nc{\tpurple}[1]{\textcolor{purple}{#1}}
\nc{\btred}[1]{\textcolor{red}{\bf #1}}
\nc{\btblue}[1]{\textcolor{blue}{\bf #1}}
\nc{\btgreen}[1]{\textcolor{green}{\bf #1}}
\nc{\btpurple}[1]{\textcolor{purple}{\bf #1}}


\nc{\vertset}{\Omega} 
\nc{\leafset}{\calx} 
\nc{\gensp}{V} 
\nc{\relsp}{R} 
\nc{\leafsp}{\mathcal{X}}    
\nc{\treesp}{\mathbb{T}} 
\nc{\genbas}{\mathcal{V}} 
\nc{\opd}{\mathcal{P}} 

\nc{\vin}{{\mathrm Vin}}    
\nc{\lin}{{\mathrm Lin}}    
\nc{\inv}{\mathrm{I}n}

\nc{\bvp}{V_P}     

\nc{\gop}{{\,\omega\,}}     
\nc{\gopb}{{\,\nu\,}}
\nc{\svec}[2]{{\textrm{\tiny{$\left(\begin{matrix}#1\\
#2\end{matrix}\right)$}}}}  
\nc{\ssvec}[2]{{\textrm{\tiny{$\left(\begin{matrix}#1\\
#2\end{matrix}\right)$}}}} 
\nc{\treeg}[5]{\vcenter{\xymatrix@M=1.5pt@R=1.5pt@C=0pt{#1 & & #2 & & & #3 \\ & #5 \ar@{-}[lu] \ar@{-}[ru] & & & & \\ & & #4 \ar@{-}[lu] \ar@{-}[rrruu] & & \\ & & & & & \\ & & \ar@{-}[uu] & & & }}}
\nc{\treed}[5]{\vcenter{\xymatrix@M=2pt@R=4pt@C=2pt{#1 & & & #2  & & #3 \\ & & & & #5 \ar@{-}[lu] \ar@{-}[ru] & \\ & & & #4 \ar@{-}[ru] \ar@{-}[llluu] & \\ & & & & & \\ & & & \ar@{-}[uu] & & }}}

\nc{\tsvec}[3]{{\textrm{\tiny{$\left(\begin{matrix}#1\\
#2\\#3\end{matrix}\right)$}}}}  

\nc{\stsvec}[3]{{\textrm{\tiny{$\left(\begin{matrix}#1\\
#2\\#3\end{matrix}\right)$}}}} 

\nc{\su}{\mathrm{Su}}
\nc{\bsu}{\mathrm{BSu}}
\nc{\tsu}{\mathrm{TSu}}
\nc{\TSu}{\mathrm{TSu}}
\nc{\eval}[1]{{#1}_{\big|D}}
\nc{\oto}{\leftrightarrow}

\nc{\du}{\mathrm{Du}}
\nc{\tdu}{\mathrm{Tri}}
\nc{\rep}{\dagger}

\nc{\oaset}{\mathbf{O}^{\rm alg}}
\nc{\omset}{\mathbf{O}^{\rm mod}}
\nc{\oamap}{\Phi^{\rm alg}}
\nc{\ommap}{\Phi^{\rm mod}}
\nc{\ioaset}{\mathbf{IO}^{\rm alg}}
\nc{\iomset}{\mathbf{IO}^{\rm mod}}
\nc{\ioamap}{\Psi^{\rm alg}}
\nc{\iommap}{\Psi^{\rm mod}}

\nc{\suc}{{successor}\xspace}
\nc{\Suc}{{Successor}\xspace}
\nc{\sucs}{{successors}\xspace}
\nc{\Sucs}{{Successors}\xspace}
\nc{\Tsuc}{{T-successor}\xspace}
\nc{\Tsucs}{{T-successors}\xspace}

\nc{\ddup}{{duplicator}\xspace}
\nc{\Ddup}{{Duplicator}\xspace}
\nc{\ddups}{{duplicators}\xspace}
\nc{\Ddups}{{Duplicators}\xspace}
\nc{\tdup}{{triplicator}\xspace}
\nc{\tdups}{{triduplicators}\xspace}
\nc{\Tdup}{{Triplicator}\xspace}
\nc{\Tdups}{{Triplicators}\xspace}

\nc{\perm}{{perm}}
\nc{\tperm}{{tri-perm}}

\nc{\Lsuc}{{L-successor}\xspace}
\nc{\Lsucs}{{L-successors}\xspace} \nc{\Rsuc}{{R-successor}\xspace}
\nc{\Rsucs}{{R-successors}\xspace}

\nc{\bia}{{$\mathcal{P}$-bimodule ${\bf k}$-algebra}\xspace}
\nc{\bias}{{$\mathcal{P}$-bimodule ${\bf k}$-algebras}\xspace}

\nc{\rmi}{{\mathrm{I}}}
\nc{\rmii}{{\mathrm{II}}}
\nc{\rmiii}{{\mathrm{III}}}

\nc{\pll}{\beta}
\nc{\plc}{\epsilon}

\nc{\ass}{{\mathit{Ass}}}
\nc{\comm}{{\mathit{Comm}}}
\nc{\comtrias}{{\mathit{ComTriass}}}
\nc{\TriLeib}{{\mathit{TriLeib}}}
\nc{\dend}{{\mathit{Dend}}}
\nc{\diass}{{\mathit{Diass}}}
\nc{\leib}{{\mathit{Leib}}}
\nc{\ldend}{{\mathit{LDend}}}
\nc{\lie}{{\mathit{Lie}}}
\nc{\lquad}{{\mathit{LQuad}}}
\nc{\octo}{{\mathit{Octo}}}
\nc{\Perm}{{\mathit{Perm}}}
\nc{\postlie}{{\mathit{PostLie}}}
\nc{\prelie}{{\mathit{preLie}}}
\nc{\Prelie}{{\mathit{PreLie}}}
\nc{\quado}{{\mathit{Quad}}}
\nc{\tridend}{{\mathit{TriDend}}}
\nc{\zinb}{{\mathit{Zinb}}}

 \nc{\adec}{\check{;}} \nc{\aop}{\alpha}
\nc{\dftimes}{\widetilde{\otimes}} \nc{\dfl}{\succ} \nc{\dfr}{\prec}
\nc{\dfc}{\circ} \nc{\dfb}{\bullet} \nc{\dft}{\star}
\nc{\dfcf}{{\mathbf k}} \nc{\apr}{\ast} \nc{\spr}{\cdot}
\nc{\twopr}{\circ} \nc{\tspr}{\star} \nc{\sempr}{\ast}
\nc{\disp}[1]{\displaystyle{#1}}
\nc{\bin}[2]{ (_{\stackrel{\scs{#1}}{\scs{#2}}})}  
\nc{\binc}[2]{ \left (\!\! \begin{array}{c} \scs{#1}\\
    \scs{#2} \end{array}\!\! \right )}  
\nc{\bincc}[2]{  \left ( {\scs{#1} \atop
    \vspace{-.5cm}\scs{#2}} \right )}  
\nc{\sarray}[2]{\begin{array}{c}#1 \vspace{.1cm}\\ \hline
    \vspace{-.35cm} \\ #2 \end{array}}
\nc{\bs}{\bar{S}} \nc{\dcup}{\stackrel{\bullet}{\cup}}
\nc{\dbigcup}{\stackrel{\bullet}{\bigcup}} \nc{\etree}{\big |}
\nc{\la}{\longrightarrow} \nc{\fe}{\'{e}} \nc{\rar}{\rightarrow}
\nc{\dar}{\downarrow} \nc{\dap}[1]{\downarrow
\rlap{$\scriptstyle{#1}$}} \nc{\uap}[1]{\uparrow
\rlap{$\scriptstyle{#1}$}} \nc{\defeq}{\stackrel{\rm def}{=}}
\nc{\dis}[1]{\displaystyle{#1}} \nc{\dotcup}{\,
\displaystyle{\bigcup^\bullet}\ } \nc{\sdotcup}{\tiny{
\displaystyle{\bigcup^\bullet}\ }} \nc{\hcm}{\ \hat{,}\ }
\nc{\hcirc}{\hat{\circ}} \nc{\hts}{\hat{\shpr}}
\nc{\lts}{\stackrel{\leftarrow}{\shpr}}
\nc{\rts}{\stackrel{\rightarrow}{\shpr}} \nc{\lleft}{[}
\nc{\lright}{]} \nc{\uni}[1]{\widetilde{#1}} \nc{\wor}[1]{\check{#1}}
\nc{\free}[1]{\bar{#1}} \nc{\den}[1]{\check{#1}} \nc{\lrpa}{\wr}
\nc{\curlyl}{\left \{ \begin{array}{c} {} \\ {} \end{array}
    \right .  \!\!\!\!\!\!\!}
\nc{\curlyr}{ \!\!\!\!\!\!\!
    \left . \begin{array}{c} {} \\ {} \end{array}
    \right \} }
\nc{\leaf}{\ell}       
\nc{\longmid}{\left | \begin{array}{c} {} \\ {} \end{array}
    \right . \!\!\!\!\!\!\!}
\nc{\ot}{\otimes} \nc{\sot}{{\scriptstyle{\ot}}}
\nc{\otm}{\overline{\ot}}
\nc{\ora}[1]{\stackrel{#1}{\rar}}
\nc{\ola}[1]{\stackrel{#1}{\la}}
\nc{\pltree}{\calt^\pl}
\nc{\epltree}{\calt^{\pl,\NC}}
\nc{\rbpltree}{\calt^r}
\nc{\scs}[1]{\scriptstyle{#1}} \nc{\mrm}[1]{{\rm #1}}
\nc{\dirlim}{\displaystyle{\lim_{\longrightarrow}}\,}
\nc{\invlim}{\displaystyle{\lim_{\longleftarrow}}\,}
\nc{\mvp}{\vspace{0.5cm}} \nc{\svp}{\vspace{2cm}}
\nc{\vp}{\vspace{8cm}} \nc{\proofbegin}{\noindent{\bf Proof: }}
\nc{\proofend}{$\blacksquare$ \vspace{0.5cm}}
\nc{\freerbpl}{{F^{\mathrm RBPL}}}
\nc{\sha}{{\mbox{\cyr X}}}  
\nc{\ncsha}{{\mbox{\cyr X}^{\mathrm NC}}} \nc{\ncshao}{{\mbox{\cyr
X}^{\mathrm NC,\,0}}}
\nc{\shpr}{\diamond}    
\nc{\shprm}{\overline{\diamond}}    
\nc{\shpro}{\diamond^0}    
\nc{\shprr}{\diamond^r}     
\nc{\shpra}{\overline{\diamond}^r}
\nc{\shpru}{\check{\diamond}} \nc{\catpr}{\diamond_l}
\nc{\rcatpr}{\diamond_r} \nc{\lapr}{\diamond_a}
\nc{\sqcupm}{\ot}
\nc{\lepr}{\diamond_e} \nc{\vep}{\varepsilon} \nc{\labs}{\mid\!}
\nc{\rabs}{\!\mid} \nc{\hsha}{\widehat{\sha}}
\nc{\lsha}{\stackrel{\leftarrow}{\sha}}
\nc{\rsha}{\stackrel{\rightarrow}{\sha}} \nc{\lc}{\lfloor}
\nc{\rc}{\rfloor}
\nc{\tpr}{\sqcup}
\nc{\nctpr}{\vee}
\nc{\plpr}{\star}
\nc{\rbplpr}{\bar{\plpr}}
\nc{\sqmon}[1]{\langle #1\rangle}
\nc{\forest}{\calf}
\nc{\altx}{\Lambda_X} \nc{\vecT}{\vec{T}} \nc{\onetree}{\bullet}
\nc{\Ao}{\check{A}}
\nc{\seta}{\underline{\Ao}}
\nc{\deltaa}{\overline{\delta}}
\nc{\trho}{\widetilde{\rho}}

\nc{\rpr}{\circ}
\nc{\dpr}{{\tiny\diamond}}
\nc{\rprpm}{{\rpr}}

\nc{\mmbox}[1]{\mbox{\ #1\ }} \nc{\ann}{\mrm{ann}}
\nc{\Aut}{\mrm{Aut}} \nc{\can}{\mrm{can}}
\nc{\twoalg}{{two-sided algebra}\xspace}
\nc{\colim}{\mrm{colim}}
\nc{\Cont}{\mrm{Cont}} \nc{\rchar}{\mrm{char}}
\nc{\cok}{\mrm{coker}} \nc{\dtf}{{R-{\rm tf}}} \nc{\dtor}{{R-{\rm
tor}}}
\renewcommand{\det}{\mrm{det}}
\nc{\depth}{{\mrm d}}
\nc{\Div}{{\mrm Div}} \nc{\End}{\mrm{End}} \nc{\Ext}{\mrm{Ext}}
\nc{\Fil}{\mrm{Fil}} \nc{\Frob}{\mrm{Frob}} \nc{\Gal}{\mrm{Gal}}
\nc{\GL}{\mrm{GL}} \nc{\Hom}{\mrm{Hom}} \nc{\hsr}{\mrm{H}}
\nc{\hpol}{\mrm{HP}} \nc{\id}{\mrm{id}} \nc{\im}{\mrm{im}}
\nc{\incl}{\mrm{incl}} \nc{\length}{\mrm{length}}
\nc{\LR}{\mrm{LR}} \nc{\mchar}{\rm char} \nc{\NC}{\mrm{NC}}
\nc{\mpart}{\mrm{part}} \nc{\pl}{\mrm{PL}}
\nc{\ql}{{\QQ_\ell}} \nc{\qp}{{\QQ_p}}
\nc{\rank}{\mrm{rank}} \nc{\rba}{\rm{RBA }} \nc{\rbas}{\rm{RBAs }}
\nc{\rbpl}{\mrm{RBPL}}
\nc{\rbw}{\rm{RBW }} \nc{\rbws}{\rm{RBWs }} \nc{\rcot}{\mrm{cot}}
\nc{\rest}{\rm{controlled}\xspace}
\nc{\rdef}{\mrm{def}} \nc{\rdiv}{{\rm div}} \nc{\rtf}{{\rm tf}}
\nc{\rtor}{{\rm tor}} \nc{\res}{\mrm{res}} \nc{\SL}{\mrm{SL}}
\nc{\Spec}{\mrm{Spec}} \nc{\tor}{\mrm{tor}} \nc{\Tr}{\mrm{Tr}}
\nc{\mtr}{\mrm{sk}}

\nc{\ab}{\mathbf{Ab}} \nc{\Alg}{\mathbf{Alg}}
\nc{\Algo}{\mathbf{Alg}^0} \nc{\Bax}{\mathbf{Bax}}
\nc{\Baxo}{\mathbf{Bax}^0} \nc{\RB}{\mathbf{RB}}
\nc{\DA}{\mathbf{DA}}
\nc{\RBo}{\mathbf{RB}^0} \nc{\BRB}{\mathbf{RB}}
\nc{\Dend}{\mathbf{DD}} \nc{\bfk}{{\bf k}} \nc{\bfone}{{\bf 1}}
\nc{\base}[1]{{a_{#1}}} \nc{\detail}{\marginpar{\bf More detail}
    \noindent{\bf Need more detail!}
    \svp}
\nc{\Diff}{\mathbf{Diff}} \nc{\gap}{\marginpar{\bf
Incomplete}\noindent{\bf Incomplete!!}
    \svp}
\nc{\FMod}{\mathbf{FMod}} \nc{\mset}{\mathbf{MSet}}
\nc{\rb}{\mathrm{RB}} \nc{\Int}{\mathbf{Int}}
\nc{\da}{\mathrm{DA}}
\nc{\Mon}{\mathbf{Mon}}
\nc{\remarks}{\noindent{\bf Remarks: }}
\nc{\OS}{\mathbf{OS}} 
\nc{\Rep}{\mathbf{Rep}}
\nc{\Rings}{\mathbf{Rings}} \nc{\Sets}{\mathbf{Sets}}
\nc{\DT}{\mathbf{DT}}

\nc{\BA}{{\mathbb A}} \nc{\CC}{{\mathbb C}} \nc{\DD}{{\mathbb D}}
\nc{\EE}{{\mathbb E}} \nc{\FF}{{\mathbb F}} \nc{\GG}{{\mathbb G}}
\nc{\HH}{{\mathbb H}} \nc{\LL}{{\mathbb L}} \nc{\NN}{{\mathbb N}}
\nc{\QQ}{{\mathbb Q}} \nc{\RR}{{\mathbb R}} \nc{\BS}{{\mathbb{S}}} \nc{\TT}{{\mathbb T}}
\nc{\VV}{{\mathbb V}} \nc{\ZZ}{{\mathbb Z}}


\nc{\calao}{{\mathcal A}} \nc{\cala}{{\mathcal A}}
\nc{\calc}{{\mathcal C}} \nc{\cald}{{\mathcal D}}
\nc{\cale}{{\mathcal E}} \nc{\calf}{{\mathcal F}}
\nc{\calfr}{{{\mathcal F}^{\,r}}} \nc{\calfo}{{\mathcal F}^0}
\nc{\calfro}{{\mathcal F}^{\,r,0}} \nc{\oF}{\overline{F}}
\nc{\calg}{{\mathcal G}} \nc{\calh}{{\mathcal H}}
\nc{\cali}{{\mathcal I}} \nc{\calj}{{\mathcal J}}
\nc{\call}{{\mathcal L}} \nc{\calm}{{\mathcal M}}
\nc{\caln}{{\mathcal N}} \nc{\calo}{{\mathcal O}}
\nc{\calp}{{\mathcal P}} \nc{\calq}{{\mathcal Q}} \nc{\calr}{{\mathcal R}}
\nc{\calt}{{\mathscr T}} \nc{\caltr}{{\mathcal T}^{\,r}}
\nc{\calu}{{\mathcal U}} \nc{\calv}{{\mathcal V}}
\nc{\calw}{{\mathcal W}} \nc{\calx}{{\mathcal X}}
\nc{\CA}{\mathcal{A}}

\nc{\fraka}{{\mathfrak a}} \nc{\frakB}{{\mathfrak B}}
\nc{\frakb}{{\mathfrak b}} \nc{\frakd}{{\mathfrak d}}
\nc{\oD}{\overline{D}}
\nc{\frakF}{{\mathfrak F}} \nc{\frakg}{{\mathfrak g}}
\nc{\frakm}{{\mathfrak m}} \nc{\frakM}{{\mathfrak M}}
\nc{\frakMo}{{\mathfrak M}^0} \nc{\frakp}{{\mathfrak p}}
\nc{\frakS}{{\mathfrak S}} \nc{\frakSo}{{\mathfrak S}^0}
\nc{\fraks}{{\mathfrak s}} \nc{\os}{\overline{\fraks}}
\nc{\frakT}{{\mathfrak T}}
\nc{\oT}{\overline{T}}
\nc{\frakX}{{\mathfrak X}} \nc{\frakXo}{{\mathfrak X}^0}
\nc{\frakx}{{\mathbf x}}
\nc{\frakTx}{\frakT}      
\nc{\frakTa}{\frakT^a}        
\nc{\frakTxo}{\frakTx^0}   
\nc{\caltao}{\calt^{a,0}}   
\nc{\ox}{\overline{\frakx}} \nc{\fraky}{{\mathfrak y}}
\nc{\frakz}{{\mathfrak z}} \nc{\oX}{\overline{X}}

\font\cyr=wncyr10

\nc{\redtext}[1]{\textcolor{red}{#1}}
\nc{\cm}[1]{\textcolor{blue}{Chengming: #1}}
\nc{\li}[1]{\textcolor{red}{Li: #1}}
\nc{\jun}[1]{\textcolor{blue}{Jun: #1}}
\nc{\xiang}[1]{\textcolor{purple}{Xiang: #1}}


\title{Replicating of binary operads, Koszul duality, Manin products and average operators}

\author{Jun Pei}
\address{Department of Mathematics, Lanzhou University, Lanzhou, Gansu 730000, China}
         \email{peitsun@163.com}

\author{Chengming Bai}
\address{Chern Institute of Mathematics \& LPMC, Nankai University, Tianjin 300071, China}
         \email{baicm@nankai.edu.cn}

\author{Li Guo}
\address{Department of Mathematics and Computer Science,
         Rutgers University,
         Newark, NJ 07102}
\email{liguo@rutgers.edu}

\author{Xiang Ni}
\address{Department of Mathematics, Caltech, Pasadena, CA 91125, USA} \email{xni@caltech.edu}

\date{\today}


\begin{abstract}
We consider the notions of the replicators, including the duplicator and triplicator, of a binary operad. As in the closely related notions of di-Var-algebra and tri-Var-algebra in~\mcite{GK2}, they provide a general operadic definition for the recent constructions of replicating the operations of algebraic structures. We show that taking replicators is in Koszul dual to taking successors in~\mcite{BBGN} for binary quadratic operads and is equivalent to taking the white product with certain operads such as $Perm$. We also relate the replicators to the actions of average operators.
\end{abstract}


\maketitle

\tableofcontents

\setcounter{section}{0}

\section{Introduction}
Motivated by the study of the periodicity in algebraic $K$-theory, J.-L. Loday~\mcite{Lo1} introduced the concept of a Leibniz algebra twenty years ago as a non-skew-symmetric generalization of the Lie algebra.
He then defined the diassociative algebra~\mcite{Lo2} as the enveloping algebra of the Leibniz algebra in analogue to the associative algebra as the enveloping algebra of the Lie algebra. The dendriform algebra was introduced as the Koszul dual of the diassociative algebra. These structures were studied systematically in the next few years in connection with operads~\mcite{Lo5}, homology~\mcite{Fr1,Fr2}, Hopf algebras~\mcite{AL,Hol,LR,Ron}, arithmetic~\mcite{Lo7}, combinatorics~\mcite{Fo,LR1}, quantum field theory~\mcite{Fo} and Rota-Baxter algebra~\mcite{Ag2}.

The diassociative and dendriform algebras extend the associative algebra in two directions. While the diassociative algebra ``doubles" the associative algebra in the sense that it has two associative operations with certain compatible conditions, the dendriform algebra ``splits" the associative algebra in the sense that it has two binary operations with relations between them so that the sum of the two operations is associative.

Into this century, more algebraic structures with multiple binary operations emerged, beginning with the triassociative algebra that ``triples" the associative algebra and the tridendriform algebra that gives a three way splitting of the associative algebra~\mcite{LR}. Since then, quite a few dendriform related structures, such as the quadri-algebra~\mcite{AL}, the ennea-algebra, the NS-algebra, the dendriform-Nijenhuis algebra, the octo-algebra~\mcite{Le1,Le2,Le3} and eventually a whole class of algebras~\mcite{Lo5,EG2} were introduced.
All these dendriform type structures have a
common property of ``splitting" the associativity into multiple pieces. Furthermore, analogues of the dendriform algebra, quadri-algebra and octo-algebra for the Lie algebra, commutative algebra, Jordan algebra, alternative algebra and Poisson algebra have been obtained~\mcite{Ag2,BLN,HNB,LNB,Lo4,NB}, such as the pre-Lie and Zinbiel algebras.
More recently, these constructions can be put into the framework of operad products (Manin black square and black dot products)~\mcite{EG,Lo3,Va}.

In~\mcite{BBGN}, the notions of ``successors" were introduced to give the precise meaning of two way and three way splitting of a binary operad and thus put the previous constructions in a uniform framework. This notion is also related to the Manin black products that had only been dealt with in special cases before, as indicated above. It is also shown to be related to the action of the Rota-Baxter operator, completing a long series of studies starting from the beginning of the century~\mcite{Ag2}.

In this paper, we take a similar approach to the other class of structures starting from the diassociative (resp. triassociative) algebra. That is, we seek to understand the phenomena of ``replicating" the operations in an operad.
After the completion of the paper, we realized that the closely related notions di-Var-algebra and tri-Var-algebra have been introduced in~\mcite{GK2} (see also~\mcite{Ko,KV}) by Kolesnikov and his coauthors. In fact their notions also apply to not necessarily binary operads~\mcite{KV}. We thank Kolesnikov for informing us to their studies. In this regards, the current paper provides an alternative and more detailed treatment of these notations for binary operads.

In Section~\mref{sec:conc} we set up a general framework to make precise the notion of ``replicating" any binary algebraic operad. This provides a general framework to study the previously well-known di-type (resp. tri-type) algebras which are analogues of the diassociative (resp. triassociative) algebra associated to the associative algebra, including the Leibniz algebra for the Lie algebra and the permutative algebra for the commutative algebra, as well as the recently defined pre-Lie dialgebra~\mcite{F}.
In general, it gives a ``rule" to construct new di-type (resp. tri-type) algebraic structures associated to any other binary operads. This notion is simpler in formulation but turns out to be equivalent to the notion of di-Var-algebra in~\mcite{GK2} for binary operads with nontrivial relations.

We show in Section~\mref{sec:mp} that taking the replicator of a binary quadratic operad is in Koszul dual with taking the successor of the dual operad. A direct application of this duality (Theorem~\mref{thm:dusu} and~\mref{thm:tdutsu}) is to explicitly compute the Koszul dual of the operads of existing algebras, for example the Koszul dual of the commutative tridendriform algebra of Loday~\mcite{Lo4}.
We also relate replicating to the Manin white product in the case of binary quadratic operads.
In fact taking the duplicator (resp. triplicator) of such an operad with nontrivial relations is isomorphic to taking the white product of the operad $\Perm$ (resp. $\comtrias$) with this operad, as in the case of taking di-Var-algebras and tri-Var-algebras\cite{GK2}. Thus showing the notations of duplicator and triplicator are equivalent to those of di-Var-algebras and tri-Var-algebras.

Finally, in Section~\mref{sec:rb}, we relate the replicating process to the action of average operators on binary quadratic operads.
Aguiar~\mcite{Ag2} showed that the action of the two-sided average operator on a commutative associative algebra (resp. associative algebra) gives a perm algebra (resp. associative dialgebra).
In \mcite{Uc}, Uchino extended the classical derived bracket construction to any algebra over a binary quadratic operad, showing that the derived bracket construction can be given by the Manin white product with the operad $\Perm$.

Thus there are relationship among the three operations applied to a binary operad $\calp$: taking its duplicator (resp. triplicator), taking its Manin white product with $\Perm$ (resp. $\comtrias$), when the operad is quadratic, and apply a di-average operator (resp. tri-average operator) to it, as summarized in the following diagram.
\begin{equation*}
 \xymatrix{
& \left\{ {\begin{array}{l} \text{Duplicator} \\ \text{Triplicator} \end{array} } \right .
 \ar@{<->}[ld] \ar@{<->}[rd] & \\
{\begin{array}{c}\text{Manin white}\\ \text{product with} \end{array} } \left\{ {\begin{array}{l}\Perm \\ \comtrias \end{array}} \right.  \ar@{<->}[rr]  && \left\{{\begin{array}{l} \text{di-} \\ \text{tri-}\end{array} } \right\} {\begin{array}{c} \text{average} \\ \text{operators} \end{array}}
}
\end{equation*}

Combining the replicators with the successors introduced in~\mcite{BBGN} allows us to put the splitting and replicating processes together, as exemplified in the following commutative diagram of operads. The arrows should be reversed on the level of categories.
\begin{equation*}
\xymatrix{&&PreLie \ar@{->}^{-}[rr]& &Dend\ar[rr] & &Zinb \\
\ar@{=>}^{Bsu}[u]\ar@{=>}_{Du}[d]&&Lie \ar@{->}_{+}[u]\ar@{->}_{-}[rr] & & \ass\ar[rr] \ar@{->}_{+}[u] & & Comm\ar@{->}_{+}[u]\\
&&Leib \ar[u]\ar@{->}_{-}[rr]& & Dias \ar[rr] \ar[u]& & Perm\ar[u]
}
\end{equation*}
Here the vertical arrows in the upper half of the diagram are addition of the two operations given in ~\cite[Proposition~2.31.(a)]{BBGN} while those in the lower half of the diagram are given in Proposition~\mref{prop:quotient} (\ref{it:quotienta}). The horizontal arrows in the left half of the diagram are anti-symmetrization of the binary operations while those in the right half of the diagram are induced by the identity maps on the binary operations.
In the diagram, the Koszul dual of an operad is the reflection across the center. A similar commutative diagram holds for the trisuccessors and triplicators.

\section{The replicators of a binary  operad}
\mlabel{sec:conc}
In this section, we first introduce the concepts of the replicators, namely the duplicator and triplicator, of a labeled planar binary tree, following the framework in~\mcite{BBGN} and in close resemblance with the concepts of the di-Var-algebra tri-Var-algebra in~\mcite{GK2}. These concepts are then applied to define similar concepts for a nonsymmetric operad and a (symmetric) operad. A list of examples is provided, followed by a study of the relationship among an operad, its duplicator and its triplicator.

\subsection{The replicators of a planar binary tree}
We first recall notions on operads represented by trees. For more details see~\mcite{BBGN,LV}.
\subsubsection{Labeled trees}
\begin{defn}
{\rm
\begin{enumerate}
\item
Let $\calt$ denote the set of planar binary reduced rooted trees together with the trivial tree $\vcenter{\xymatrix@M=4pt@R=8pt@C=4pt{\ar@{-}[d]\\ \\}}$. If $t\in \calt$ has $n$ leaves, we call $t$ an {\bf $n$-tree}. The trivial tree $\vcenter{\xymatrix@M=4pt@R=8pt@C=4pt{\ar@{-}[d]\\ \\}}$ has one leaf.
\item
 Let $\vertset$ be a set. By a {\bf decorated tree} we mean a tree $t$ of $\calt$ together with a decoration on the vertices of $t$ by elements of $\vertset$ and a decoration on the leaves of $t$ by distinct positive integers. Let $t(\vertset)$ denote the set of decorated trees for $t$ and denote
\begin{equation*}
\calt(\vertset):=\coprod_{ t \in \calt} t(\vertset).
\end{equation*}
If $\tau\in t(\vertset)$ for an $n$-tree $t$, we call $\tau$ a {\bf labeled $n$-tree}.
\item
For $\tau\in \calt(\vertset)$, we let $\vin(\tau)$ (resp. $\lin(\tau)$) denote the set (resp. ordered set) of labels of the vertices (resp. leaves) of $\tau$.
\item
Let $\tau\in \calt(\vertset)$ with $|\lin(\tau)|>1$ be a labeled tree from $t\in \calt$. Then $t$ can be written uniquely as the grafting $t_\ell\vee t_r$ of $t_\ell$ and $t_r$. Correspondingly, let $\tau=\tau_\ell\vee_{\gop} \tau_r$ denote the unique decomposition of $\tau$ as a grafting of $\tau_\ell$ and $\tau_r$ in $\calt(\vertset)$ along $\gop\in \vertset$.
\end{enumerate}}
\end{defn}

Let $V$ be a vector space, regarded as an arity graded vector space concentrated in arity 2: $V=V_2$. Recall~\cite[Section 5.8.5]{LV} that the free nonsymmetric operad $\mathcal{T}_{\hspace*{-0.1cm}ns}(\gensp)$ on $V$ is given by the vector space
$$ \mathcal{T}_{\hspace*{-0.1cm}ns}(\gensp) := \displaystyle{ \bigoplus_{t \in \calt } } \ t[\gensp] \ , $$
where $t[\gensp]$ is the treewise tensor module associated to $t$, explicitly given by
$$  t[\gensp] := \displaystyle{\bigotimes_{v \in \vin(t)}} \gensp_{|\inv(v)|} \ . $$
Here $|\inv(v)|$ denotes the number of incoming edges of $v$. A basis $\genbas$ of $\gensp$ induces a basis $t(\genbas)$ of $t[\gensp]$ and a basis $\calt(\genbas)$ of $\mathcal{T}_{\hspace*{-0.1cm}ns}(\gensp)$. Consequently any element of $t[\gensp]$ can be represented as a linear combination of elements in $t(\genbas)$.

\subsubsection{Duplicators}

\begin{defn} \mlabel{defn:vector}
{\rm
Let $\gensp$ be a vector space with a basis $\genbas$.
\begin{enumerate}
\item
Define a vector space
\begin{equation}
\du(\gensp)=\gensp \ot (\bfk \dashv \oplus ~\bfk \vdash) \ ,
\mlabel{eq:tsp}
\end{equation}
where we denote $(\gop \otimes \dashv)$ (resp. $(\gop \otimes \vdash)$) by $\svec{\gop}{\dashv}$ $\Big($resp. $\svec{\gop}{\vdash}$$\Big)$ for $\gop\in \genbas$. Then $\displaystyle \bigcup_{\gop \in \genbas }\left\{\svec{\gop}{\dashv},\svec{\gop}{\vdash}\right\}$ is a basis of $\du(\gensp)$.
\item
For a labeled $n$-tree $\tau$ in $\calt(\genbas)$, define  a subset $\du(\tau)$ of $\mathcal{T}_{\hspace*{-0.1cm}ns}(\du(\genbas))$ by
\begin{itemize}
 \item[$\bullet$] $\du(\,\vcenter{\xymatrix@M=0pt@R=8pt@C=4pt{\ar@{-}[d]\\ \\}}\,)=\big\{\vcenter{\xymatrix@M=4pt@R=8pt@C=4pt{\ar@{-}[d]\\ \\}}\big\}$,
\item[$\bullet$] when $n\geq 2$, $\du(\tau)$ is obtained by replacing each decoration $\gop\in \vin(\tau)$ by $$\svec{\gop}{\rep}:=\left\{\svec{\gop}{\dashv}, \svec{\gop}{\vdash}\right\} \ . $$
\end{itemize}
\end{enumerate}
}
\end{defn}
Thus $\du(\tau)$ is a set of labeled trees.

\begin{defn}
{\rm
Let $\gensp$ be a vector space with a basis $\genbas$. Let $\tau$ be a labeled $n$-tree in $\calt(\genbas)$.
The {\bf duplicator} $\du_{x}(\tau) $ of $\tau$ with respect to a leaf $x\in \lin(\tau)$ is the subset of $ \mathcal{T}_{\hspace*{-0.1cm}ns}(\du(\genbas))$ defined by induction on $|\lin(\tau)|$ as follows:
\begin{enumerate}
\item[$\bullet$] $\du_x(\vcenter{\xymatrix@M=4pt@R=8pt@C=4pt{\ar@{-}[d]\\ \\}})=\left\{\,\vcenter{\xymatrix@M=0pt@R=8pt@C=4pt{\ar@{-}[d]\\ \\}}\,\right\}$ \ ;
\item[$\bullet$] assume that $\du_x(\tau)$ have been defined for $\tau$ with $|\lin(\tau)|\leq k$ for a $k\geq 1$. Then, for a labeled $(k+1)$-tree $\tau\in \calt(\genbas)$ with decomposition $\tau=\tau_\ell \vee_{\gop} \tau_r$, we define
\begin{equation*}
\du_{x}(\tau)=\du_{x}(\tau_\ell\vee_{\gop} \tau_r) = \left \{\begin{array}{ll}
\du_{x}(\tau_\ell)\vee_{\ssvec{\gop}{\dashv}} \du(\tau_{r}), & x\in \lin(\tau_\ell), \\
\du(\tau_{\ell})\vee_{\ssvec{\gop}{\vdash}} \du_{x}(\tau_r),& x\in \lin(\tau_r). \end{array} \right .
\end{equation*}
\end{enumerate}
For labeled $n$-trees $\tau_i, 1\leq i\leq r,$ with the same set of leaf decorations and $c_i\in \bfk, 1\leq i\leq r$, define
\begin{equation}
\du_x\left(\sum_{i=1}^r c_i \tau_i\right): = \sum_{i=1}^r c_i\du_x(\tau_i).
\end{equation}
Here and in the rest of the paper we use the notation \begin{equation}
\sum_{i=1}^r c_i W_i:=
\left\{ \sum_{i=1}^r c_i w_i\,\Big|\, w_i\in W_i, 1\leq i\leq r\right\},
\mlabel{eq:ssum}
\end{equation}
for nonempty subsets $W_i, 1\leq i\leq r$, of a $\bfk$-module.
}
\end{defn}
The next explicit description of the duplicator follows from an induction on $|\lin(\tau)|$.
\begin{prop}\label{reppath}
Let $V$ be a vector space with a basis $\genbas$, $\tau$ be in $\calt(\genbas)$ and $x$ be in $\lin(\tau)$. The duplicator $\du_x(\tau)$ is obtained by relabeling a vertex $\gop$ of $\vin(\tau)$ by
$$\left\{\begin{array}{ll}
\svec{\gop}{\dashv}\,, &  \text{the path from the root of } \tau \text{ to } x \text{ turns left at } \gop;\\
\svec{\gop}{\vdash}\,, & \text{the path from the root of } \tau \text{ to } x \text{ turns right at } \gop;\\
\svec{\gop}{\rep}:=\left\{\svec{\gop}{\dashv}, \svec{\gop}{\vdash}\right\}\,, & \text{the path from the root of } \tau \text{\ to\ } x \text{ does not pass } \gop.
\end{array}
\right.$$
\end{prop}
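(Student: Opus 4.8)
The plan is to prove the identity by induction on the number of leaves $n=|\lin(\tau)|$, following the recursion that defines $\du_x(\tau)$. The base case $n=1$ is the trivial tree, which has no internal vertices; both $\du_x(-)$ and the relabeling prescribed by the Proposition leave it unchanged, so there is nothing to verify.

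For the inductive step I would take a labeled $(k+1)$-tree $\tau$ and use its canonical decomposition $\tau=\tau_\ell\vee_{\gop}\tau_r$, where $\gop$ is the decoration of the root vertex and $|\lin(\tau_\ell)|,|\lin(\tau_r)|\le k$. The leaf $x$ lies either in $\lin(\tau_\ell)$ or in $\lin(\tau_r)$, and the two cases are symmetric, so suppose $x\in\lin(\tau_\ell)$. By definition $\du_x(\tau)=\du_x(\tau_\ell)\vee_{\ssvec{\gop}{\dashv}}\du(\tau_r)$. Since $x$ is a leaf of the left subtree, the path from the root of $\tau$ to $x$ turns left at $\gop$, so the rule of the Proposition assigns $\ssvec{\gop}{\dashv}$ to the root vertex, matching the grafting label. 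For a vertex $\gopb$ of $\tau_\ell$, the portion of the root-to-$x$ path of $\tau$ contained in $\tau_\ell$ is precisely the root-to-$x$ path of $\tau_\ell$; hence ``turns left at $\gopb$'', ``turns right at $\gopb$'', and ``does not pass $\gopb$'' have the same meaning whether computed in $\tau$ or in $\tau_\ell$, and the inductive hypothesis applied to $\du_x(\tau_\ell)$ yields exactly the relabeling prescribed by the Proposition on the vertices of $\tau_\ell$. For a vertex $\gopb$ of $\tau_r$, the root-to-$x$ path never enters $\tau_r$, so the rule prescribes $\ssvec{\gopb}{\rep}$; and by Definition~\mref{defn:vector}, $\du(\tau_r)$ is obtained from $\tau_r$ by substituting $\ssvec{\gopb}{\rep}$ for every decoration $\gopb$. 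Since the relabelings at distinct vertices are chosen independently, assembling the contributions of $\gop$, of $\vin(\tau_\ell)$, and of $\vin(\tau_r)$ shows that $\du_x(\tau)$ equals the set of relabeled trees described by the Proposition. The case $x\in\lin(\tau_r)$ is identical, using $\du_x(\tau)=\du(\tau_\ell)\vee_{\ssvec{\gop}{\vdash}}\du_x(\tau_r)$ and the fact that the path turns right at $\gop$.

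The argument is essentially bookkeeping, and the only place requiring a little care --- the ``main obstacle'', such as it is --- is the compatibility of the three path conditions with the grafting decomposition: one must verify that restricting the root-to-$x$ path of $\tau$ to a subtree reproduces the corresponding root-to-$x$ path of that subtree, and that every vertex of the subtree not containing $x$ indeed falls into the ``does not pass'' case, so that the uniform substitution $\gopb\mapsto\ssvec{\gopb}{\rep}$ performed by $\du(-)$ is exactly what the rule demands. There is no deeper combinatorial or analytic content.
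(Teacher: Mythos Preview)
Your proof is correct and follows exactly the approach the paper indicates: the paper's entire ``proof'' of this proposition is the single sentence that it ``follows from an induction on $|\lin(\tau)|$,'' and you have simply written out that induction in full. The only difference is level of detail, not strategy.
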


\begin{exam}
${\rm Du}_{x_2} \left( \vcenter{\xymatrix@M=2pt@R=4pt@C=4pt{
x_1 \ar@{-}[dr] & & x_2 & & x_3 \ar@{-}[dr] & & x_4 \ar@{-}[dl]\\
 & \gop_1 \ar@{-}[ur] & & & & \gop_3 \ar@{-}[ddll] & \\
 & & & & & & \\
 & & & \gop_2 \ar@{-}[uull] \ar@{-}[d] & & & \\
 & & & & & & \\
 & & & \ar@{-}[uu] & & &\\
}}\right) = $
$$\vcenter{\xymatrix@M=2pt@R=4pt@C=4pt{
x_1 \ar@{-}[dr] & & x_2 & & x_3 \ar@{-}[dr] & & x_4 \ar@{-}[dl]\\
 & \ssvec{\gop_1}{\vdash} \ar[ur] & & & & \ssvec{\gop_3}{\rep} \ar@{-}[ddll] & \\
 & & & & & & \\
 & & & \ssvec{\gop_2}{\dashv} \ar[uull] & & & \\
 & & & & & & \\
 & & & \ar[uu] & & \\
}}= \left\{\vcenter{\xymatrix@M=2pt@R=4pt@C=4pt{
x_1 \ar@{-}[dr] & & x_2 & & x_3 \ar@{-}[dr] & & x_4 \ar@{-}[dl]\\
 & \ssvec{\gop_1}{\vdash} \ar@{-}[ur] & & & & \ssvec{\gop_3}{\dashv} \ar@{-}[ddll] & \\
 & & & & & & \\
 & & & \ssvec{\gop_2}{\dashv} \ar@{-}[uull] & & & \\
 & & & & & & \\
 & & & \ar@{-}[uu] & & \\
}} ,  \vcenter{\xymatrix@M=2pt@R=4pt@C=4pt{
x_1 \ar@{-}[dr] & & x_2 & & x_3 \ar@{-}[dr] & & x_4 \ar@{-}[dl]\\
 & \ssvec{\gop_1}{\vdash} \ar@{-}[ur] & & & & \ssvec{\gop_3}{\vdash} \ar@{-}[ddll] & \\
 & & & & & & \\
 & & & \ssvec{\gop_2}{\dashv} \ar@{-}[uull] & & & \\
 & & & & & & \\
 & & & \ar@{-}[uu] & & \\
}} \right\}$$
\end{exam}

\subsubsection{Triplicators}
\begin{defn}
{\rm Let $\gensp$ be a vector space with a basis $\genbas$.
\begin{enumerate}
\item
Define a vector space
\begin{equation}
\tdu(\gensp)=\gensp\ot (\bfk \dashv \oplus \ \bfk \vdash \oplus \ \bfk \perp) \ ,
\mlabel{eq:ttsp}
\end{equation}
where we denote $(\gop \otimes \dashv)$ (resp. $(\gop \otimes \vdash)$, resp. $(\gop \otimes \perp)$) by $\svec{\gop}{\dashv}$ $\Big($resp. $\svec{\gop}{\vdash}$, resp. $\svec{\gop}{\perp}$$\Big)$ for $\gop\in \genbas$. Then $\displaystyle \bigcup_{\gop \in \genbas }\left\{\svec{\gop}{\dashv},\svec{\gop}{\vdash}, \svec{\gop}{\perp}\right\}$ is a basis of $\tdu(\gensp)$.
\item
Let $\tau$ be a labeled $n$-tree in $\calt(\genbas)$ and let $J$ be a subset of $\lin(\tau)$.
The {\bf triplicator} $\tdu_{J}(\tau)$ of $\tau$ with respect to $J$ is a subset of $ \mathcal{T}_{\hspace*{-0.1cm}ns}(\tdu(\genbas))$ defined by induction on $|\lin(\tau)|$ as follows:
\begin{enumerate}
\item[$\bullet$] $\tdu_J(\vcenter{\xymatrix@M=4pt@R=8pt@C=4pt{\ar@{-}[d]\\ \\}})=\big\{\,\vcenter{\xymatrix@M=0pt@R=8pt@C=4pt{\ar@{-}[d]\\ \\}}\,\big\}$ \ ;
\item[$\bullet$] assume that $\tdu_J(\tau)$ have been defined for $\tau$ with $|\lin(\tau)|\leq k$ for a $k\geq 1$. Then, for a labeled $(k+1)$-tree $\tau\in \calt(\genbas)$ with decomposition $\tau=\tau_\ell \vee_{\gop} \tau_r$, we define
$$ \tdu_{J}(\tau)=\tdu_J(\tau_\ell\vee_\gop \tau_r)= \tdu_{J\cap\lin(\tau_\ell)} \vee_{\ssvec{\gop}{(\tau,J)}} \tdu_{J\cap\lin(\tau_r)},$$
where
$$ (\tau,J)=\left\{ \begin{array}{ll}
    \dashv, & J\cap \lin(\tau_\ell)\neq \varnothing, J\cap \lin(\tau_r)=\varnothing, \text{ that is, } J\subseteq \lin(\tau_r),\\
    \vdash, & J\cap \lin(\tau_\ell)= \varnothing, J\cap \lin(\tau_r)\neq\varnothing, \text{ that is, } J\subseteq \lin(\tau_\ell),\\
    \rep:=\{\dashv,\vdash,\perp\}, & J\cap \lin(\tau_\ell)= \varnothing, J\cap \lin(\tau_r)=\varnothing, \text{ that is, } J=\varnothing, \\
    \perp, & J\cap \lin(\tau_\ell)\neq \varnothing, J\cap \lin(\tau_r)\neq\varnothing, \text{ that is,  none of the above}.
\end{array} \right .
    $$
Equivalently,
\begin{equation*}
\tdu_{J}(\tau)= \left \{\begin{array}{ll}
\tdu_{J}(\tau_\ell) \vee_{\ssvec{\gop}{\dashv}} \tdu_\varnothing(\tau_{r}), & J\subseteq \lin(\tau_\ell), \\
\tdu_\varnothing(\tau_{\ell})\vee_{\ssvec{\gop}{\vdash}} \tdu_{J}(\tau_r),& J\subseteq \lin(\tau_r),\\
\tdu_\varnothing (\tau_\ell)\vee_{\ssvec{\gop}{\rep}} \tdu_\varnothing (\tau_r), & J=\varnothing, \\
\tdu_{J\cap\lin(\tau_\ell)}(\tau_\ell) \vee_{\ssvec{\gop}{\perp}} \tdu_{J\cap\lin(\tau_r)}(\tau_r), & \text{ otherwise}. \end{array} \right .
\end{equation*}
\end{enumerate}
\end{enumerate}
}
\end{defn}

We have the following explicit description of the triplicator that follows from an induction on $|\lin(\tau)|$.

\begin{prop}\mlabel{treppath}
Let $V$ be a vector space with a basis $\calv$, let $\tau$ be in $\calt(\genbas)$ and let $J$ be a nonempty subset of $\lin(\tau)$. The triplicator $\tdu_J(\tau)$ is obtained by relabeling each vertex $\gop$ of $\vin(\tau)$ by the following rules:
\begin{enumerate}
\item Suppose $\gop$ is on the paths from the root of $\tau$ to some (possibly multiple) $x$ in $J$. Then
    \begin{enumerate}
    \item[(i)] replace $\gop$ by $\svec{\gop}{\dashv}$ if all of such paths turn left at $\gop$;
    \item[(ii)] replace $\gop$ by $\svec{\gop}{\vdash}$ if all of such paths turn right at $\gop$;
    \item[(iii)] replace $\gop$ by $\svec{\gop}{\perp}$ if some of such paths turn left at $\gop$ and some of such paths turn right at $\gop$.
    \end{enumerate}
\item Suppose $\gop$ is not on the path from the root of $\tau$ to any $x\in J$. Then replace $\gop$ by $\svec{\gop}{\rep}:=\left\{\svec{\gop}{\dashv} , \svec{\gop}{\vdash}, \svec{\gop}{\perp}\right\}$;
\end{enumerate}
\end{prop}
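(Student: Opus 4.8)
The plan is to prove Proposition~\mref{treppath} by induction on $|\lin(\tau)|$, running the induction in parallel with the recursive definition of $\tdu_J(\tau)$. The base case $|\lin(\tau)|=1$ is the trivial tree, for which both sides are $\{\vcenter{\xymatrix@M=0pt@R=8pt@C=4pt{\ar@{-}[d]\\ \\}}\}$ and there are no vertices to relabel, so there is nothing to check. For the inductive step I would take a labeled $(k+1)$-tree $\tau$ with its unique decomposition $\tau=\tau_\ell\vee_\gop\tau_r$, where $\gop$ is the root vertex, and split into the four cases that appear in the definition of $\tdu_J(\tau)$ according to how $J$ meets $\lin(\tau_\ell)$ and $\lin(\tau_r)$.

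The core observation that drives each case is a dictionary between the combinatorics of root-to-leaf paths in $\tau$ and those in $\tau_\ell$ and $\tau_r$: a vertex $\gop'\neq\gop$ of $\tau$ lies in (say) the left branch $\tau_\ell$, and the path in $\tau$ from the root to a leaf $x\in\lin(\tau_\ell)$ passes through $\gop'$, turns left/right there, exactly when the corresponding path in $\tau_\ell$ from its root to $x$ does; while such a path can never reach a vertex of $\tau_r$, and symmetrically. For the root vertex $\gop$ itself, the path from the root of $\tau$ to $x$ turns left at $\gop$ iff $x\in\lin(\tau_\ell)$ and turns right iff $x\in\lin(\tau_r)$. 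With this dictionary in hand, in the case $J\subseteq\lin(\tau_\ell)$ (so $J\cap\lin(\tau_r)=\varnothing$): every $x\in J$ lies in the left branch, so every such path turns left at $\gop$, matching the label $\svec{\gop}{\dashv}$ assigned by the definition and by rule (1)(i); for a non-root vertex in $\tau_\ell$ the claim is exactly the inductive hypothesis applied to $\tdu_J(\tau_\ell)$; and for a vertex in $\tau_r$, no root-to-$J$ path passes through it, so rule (2) assigns $\svec{\gop'}{\rep}$, which is precisely what $\tdu_\varnothing(\tau_r)$ produces by the inductive hypothesis (the case $J=\varnothing$ of the proposition, or directly from the definition, all vertices get the $\rep$ label). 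The case $J\subseteq\lin(\tau_r)$ is symmetric with left/right and $\dashv/\vdash$ interchanged, and the case $J=\varnothing$ is immediate since then $\tdu_\varnothing(\tau)$ relabels every vertex by $\rep$, agreeing with rule (2) applied to all vertices.

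The remaining case, $J\cap\lin(\tau_\ell)\neq\varnothing$ and $J\cap\lin(\tau_r)\neq\varnothing$, is where I expect the only real subtlety and hence the main obstacle: here the definition gives the root label $\svec{\gop}{\perp}$ and recursively forms $\tdu_{J\cap\lin(\tau_\ell)}(\tau_\ell)\vee_{\ssvec{\gop}{\perp}}\tdu_{J\cap\lin(\tau_r)}(\tau_r)$, and I must check this matches rules (1)--(2) for $\tau$ with the full set $J$. For the root $\gop$: since $J$ has leaves in both branches, some root-to-$J$ path turns left at $\gop$ and some turns right, so rule (1)(iii) gives $\svec{\gop}{\perp}$ — agreement. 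For a non-root vertex $\gop'$, say in $\tau_\ell$: a root-to-$x$ path in $\tau$ with $x\in J$ passes through $\gop'$ iff $x\in J\cap\lin(\tau_\ell)$ and the root-to-$x$ path in $\tau_\ell$ passes through $\gop'$ — so the set of paths through $\gop'$ "seen" by $J$ in $\tau$ is in bijection with those seen by $J\cap\lin(\tau_\ell)$ in $\tau_\ell$, with matching left/right turning behavior; hence the label assigned to $\gop'$ by rules (1)--(2) for $(\tau,J)$ equals that assigned for $(\tau_\ell,J\cap\lin(\tau_\ell))$, which by the inductive hypothesis is the label in $\tdu_{J\cap\lin(\tau_\ell)}(\tau_\ell)$. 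The point requiring care is that $J\cap\lin(\tau_\ell)$ may itself be nonempty with vertices that are "off all its paths," so one genuinely needs the inductive hypothesis in its full form (including rule (2)), rather than just the special cases; once this is granted the verification is a routine bookkeeping of which of $\dashv,\vdash,\perp,\rep$ is produced, and the two sides coincide vertex by vertex, completing the induction.
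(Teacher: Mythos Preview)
Your proposal is correct and follows exactly the approach indicated in the paper, which simply states that the result ``follows from an induction on $|\lin(\tau)|$'' without further detail. Your careful case analysis along the decomposition $\tau=\tau_\ell\vee_\gop\tau_r$ and the dictionary between root-to-leaf paths in $\tau$ and in its subtrees is precisely what such an induction requires; the only minor point is that since the proposition is stated for nonempty $J$ while the recursion invokes $\tdu_\varnothing$, it is cleanest (as you implicitly do) to either prove the statement simultaneously for all $J$ including $\varnothing$, or to note separately that $\tdu_\varnothing(\tau)$ relabels every vertex by $\rep$ directly from the recursive definition.
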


\begin{exam}
$\tdu_{\{1,2\}} \left( \vcenter{\xymatrix@M=2pt@R=4pt@C=4pt{
1 \ar@{-}[dr] & & 2 & & 3 \ar@{-}[dr] & & 4 \ar@{-}[dl]\\
 & \gop_1 \ar@{-}[ur] & & & & \gop_3 \ar@{-}[ddll] & \\
 & & & & & & \\
 & & & \gop_2 \ar@{-}[uull] \ar@{-}[d] & & & \\
 & & & & & & \\
 & & & \ar@{-}[uu] & & &\\
}}\right) = \vcenter{\xymatrix@M=2pt@R=4pt@C=4pt{
1  & & 2 & & 3 & & 4 \ar@{-}[dl]\\
 & \ssvec{\gop_1}{\perp} \ar[ul] \ar[ur]& & & & \ssvec{\gop_3}{\rep} \ar@{-}[ul] & \\
 & & & & & & \\
 & & & \ssvec{\gop_2}{\dashv} \ar[uull] \ar@{-}[uurr] & & & \\
 & & & & & & \\
 & & & \ar[uu] & & \\
}}$
$$ = \left\{\vcenter{\xymatrix@M=2pt@R=4pt@C=4pt{
1 \ar@{-}[dr] & & 2 & & 3 \ar@{-}[dr] & & 4 \ar@{-}[dl]\\
 & \ssvec{\gop_1}{\perp} \ar@{-}[ur] & & & & \ssvec{\gop_3}{\dashv} \ar@{-}[ddll] & \\
 & & & & & & \\
 & & & \ssvec{\gop_2}{\dashv} \ar@{-}[uull] & & & \\
 & & & & & & \\
 & & & \ar@{-}[uu] & & \\
}},
\vcenter{\xymatrix@M=2pt@R=4pt@C=4pt{
1 \ar@{-}[dr] & & 2 & & 3 \ar@{-}[dr] & & 4 \ar@{-}[dl]\\
 & \ssvec{\gop_1}{\perp} \ar@{-}[ur] & & & & \ssvec{\gop_3}{\vdash} \ar@{-}[ddll] & \\
 & & & & & & \\
 & & & \ssvec{\gop_2}{\dashv} \ar@{-}[uull] & & & \\
 & & & & & & \\
 & & & \ar@{-}[uu] & & \\
}} ,
\vcenter{\xymatrix@M=2pt@R=4pt@C=4pt{
1 \ar@{-}[dr] & & 2 & & 3 \ar@{-}[dr] & & 4 \ar@{-}[dl]\\
 & \ssvec{\gop_1}{\perp} \ar@{-}[ur] & & & & \ssvec{\gop_3}{\perp} \ar@{-}[ddll] & \\
 & & & & & & \\
 & & & \ssvec{\gop_2}{\dashv} \ar@{-}[uull] & & & \\
 & & & & & & \\
 & & & \ar@{-}[uu] & & \\
}}\right\}$$

\end{exam}

\subsection{The replicators of a binary nonsymmetric operad}\label{repnsopd}

\begin{defn}
{\rm
Let $\gensp$ be a vector space with a basis $\calv$.
\begin{enumerate}
\item
An element
$$r:=\sum_{i=1}^r c_{i}\tau_{i}, \quad c_{i}\in\bfk, \tau_i\in \calt(\genbas),$$
in $\mathcal{T}_{\hspace*{-0.1cm}ns}(\gensp)$ is called {\bf homogeneous} if $\lin(\tau_i)$ are the same for $1\leq i\leq r$. Then denote $\lin(r)=\lin(\tau_i)$ for any $1\leq i\leq r$.
\item
A collection of elements
$$r_s:=\sum_{i=1}^r c_{s,i}\tau_{s,i}, \quad c_{s,i}\in\bfk, \tau_{s,i}\in \calt(\genbas), 1\leq s\leq k, k\geq 1, $$
in $\mathcal{T}_{\hspace*{-0.1cm}ns}(\gensp)$ is called {\bf locally homogenous} if each element $r_s$, $1\leq s\leq k$, is homogeneous.
\end{enumerate}
}
\end{defn}

\begin{defn}
{\rm Let $\opd=\mathcal{T}_{\hspace*{-0.1cm}ns}(V)/(R)$ be a binary nonsymmetric operad where $V$ is a vector space with a basis $\genbas$ regarded as an arity graded vector space concentrated in arity two:  $V=V_2$ and $R$ is a set consisting of locally homogeneous elements:
\begin{equation*}
r_s=\sum_i c_{s,i}\tau_{s,i}\ \in \mathcal{T}_{\hspace*{-0.1cm}ns}(\gensp)\; , \; \ c_{s,i}\in\bfk, \ \tau_{s,i}\in \calt(\genbas), \ 1\leq s\leq k.
\end{equation*}
\begin{enumerate}
\item
The {\bf duplicator} of $\opd$ is defined to be the binary nonsymmetric operad $$
\du(\opd):=\mathcal{T}_{\hspace*{-0.1cm}ns}(\du(V) )/(\du(R)).
$$
Here $\du(V)=V\ot(\bfk \dashv \oplus\, \bfk\vdash)$ is regarded as an arity graded vector space concentrated in arity two and
\begin{equation*}
\du(R):= \bigcup_{s=1}^k\left(\bigcup_{x \in Lin(t_{s})}\du_x(r_{s})\right),
\ \text{ where } \du_x(r_{s}):= \sum_{i}c_{s,i}\du_{x}(\tau_{s,i})),
\end{equation*}
with the notation in Eq.~(\mref{eq:ssum}).
\item The {\bf triplicator} of $\opd$ is defined to be the  binary nonsymmetric operad
    $$
    \tdu(\opd):= \mathcal{T}_{\hspace*{-0.1cm}ns}(\tdu(V))/(\tdu(R)).$$
Here $\tdu(V)=V\ot(\bfk \dashv \oplus\, \bfk\vdash \oplus\, \bfk\perp)$ is regarded as an arity graded vector space concentrated in arity two and
\begin{equation*}
\tdu(R):= \bigcup_{s=1}^k\left(\bigcup_{\varnothing\neq J \subseteq Lin(r_s)}
\tdu_{J}(r_{s})\right),
\text{ where }
\tdu_{J}(r_{s}) := \sum_i c_{s,i}\tdu_{J}(\tau_{s,i}).
\end{equation*}
\end{enumerate}
}
\end{defn}

\begin{prop}\label{indgenbas}
The duplicator (resp. triplicator) of a binary nonsymmetric operad $\opd=\mathcal{T}_{\hspace*{-0.1cm}ns}(V)/(R)$ does not depend on the choice of a basis $\genbas$ of $V$.
\end{prop}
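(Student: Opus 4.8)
The plan is to show that the construction of $\du(\opd)$ (resp. $\tdu(\opd)$) is natural with respect to linear changes of basis of $V$, so that any two bases produce canonically isomorphic operads. Concretely, suppose $\genbas$ and $\genbas'$ are two bases of $V$, and let $\phi\colon V\to V$ be the linear automorphism sending $\genbas$ to $\genbas'$. The first step is to observe that $\du(\phi)\colon\du(V)\to\du(V)$, defined by $\du(\phi)\svec{\gop}{\dashv}=\svec{\phi(\gop)}{\dashv}$ and $\du(\phi)\svec{\gop}{\vdash}=\svec{\phi(\gop)}{\vdash}$ and extended linearly, is a well-defined linear automorphism of $\du(V)$, independent of the chosen bases once $\phi$ is fixed (because $\du(V)=V\ot(\bfk\dashv\oplus\bfk\vdash)$ canonically, and $\du(\phi)=\phi\ot\id$). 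Hence $\du(\phi)$ induces an automorphism $\mathcal{T}_{\hspace*{-0.1cm}ns}(\du(\phi))$ of the free nonsymmetric operad $\mathcal{T}_{\hspace*{-0.1cm}ns}(\du(V))$.

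The key step is then to prove the compatibility identity
\[
\mathcal{T}_{\hspace*{-0.1cm}ns}(\du(\phi))\bigl(\du_x(\tau)\bigr)=\du_x\bigl(\mathcal{T}_{\hspace*{-0.1cm}ns}(\phi)(\tau)\bigr)
\]
for every labeled tree $\tau\in\calt(\genbas)$ and every leaf $x\in\lin(\tau)$, where on the right $\mathcal{T}_{\hspace*{-0.1cm}ns}(\phi)(\tau)$ is first expanded as a linear combination of trees in $\calt(\genbas')$ (replacing each vertex decoration $\gop$ by $\phi(\gop)$) and then $\du_x$ is applied using the notation of Eq.~(\ref{eq:ssum}). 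I would prove this by induction on $|\lin(\tau)|$, using Proposition~\ref{reppath}: the duplicator relabels a vertex $\gop$ according only to whether the path from the root to $x$ turns left, turns right, or misses $\gop$, and this combinatorial datum of $\tau$ depends only on the underlying planar tree $t$, not on the decorations. Therefore relabeling the vertices first by $\phi$ and then applying $\du_x$ gives the same set of trees in $\mathcal{T}_{\hspace*{-0.1cm}ns}(\du(V))$ as applying $\du_x$ first and then relabeling by $\du(\phi)=\phi\ot\id$. Summing over $x\in\lin(r_s)$ and over the generating relations $r_s$, this yields $\mathcal{T}_{\hspace*{-0.1cm}ns}(\du(\phi))(\du(R))=\du(R')$ as subsets of $\mathcal{T}_{\hspace*{-0.1cm}ns}(\du(V))$, where $R'$ is $R$ rewritten in the basis $\genbas'$; since $(R)=(R')$ as operadic ideals (they are the same ideal, only the chosen generating set differs), we get that $\mathcal{T}_{\hspace*{-0.1cm}ns}(\du(\phi))$ descends to an isomorphism of $\du(\opd)$ computed with $\genbas$ onto $\du(\opd)$ computed with $\genbas'$.

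For the triplicator the argument is identical, replacing Proposition~\ref{reppath} by Proposition~\ref{treppath} and the two-element decoration set $\{\dashv,\vdash\}$ by $\{\dashv,\vdash,\perp\}$; the only new feature is that a vertex $\gop$ on the paths to $J$ may be relabeled by $\perp$ when different elements of $J$ force turns in different directions, but this decision again depends only on the underlying planar tree and the leaf-subset $J$, not on the decorations, so the inductive step goes through verbatim. The main obstacle, and the point that requires care rather than cleverness, is bookkeeping: $\du_x$ and $\tdu_J$ produce \emph{sets} of trees, so one must check the naturality identity at the level of the set-valued operations and then verify that it is compatible with the linear-combination convention of Eq.~(\ref{eq:ssum}) — in particular that $\mathcal{T}_{\hspace*{-0.1cm}ns}(\du(\phi))$, being linear, commutes with forming the sets $\sum_i c_i W_i$. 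I would also remark that since everything is natural in $\phi$ (functoriality: $\du(\psi\circ\phi)=\du(\psi)\circ\du(\phi)$ and $\du(\id)=\id$), the resulting family of isomorphisms is coherent, so $\du(\opd)$ and $\tdu(\opd)$ are well defined up to canonical isomorphism, which is the precise meaning of ``does not depend on the choice of basis.''
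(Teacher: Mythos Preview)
Your proposal is correct and is essentially a careful unpacking of the paper's own proof, which is the one-line remark that the claim ``is straightforward to check from the linearity of the duplicator (resp.\ triplicator) and from the treewise tensor module structure on $\mathcal{T}_{\hspace*{-0.1cm}ns}(\gensp)$.'' Your naturality argument via the automorphism $\du(\phi)=\phi\otimes\id$ is precisely what ``linearity'' and ``treewise tensor module structure'' encode: since $t[\du(V)]\cong t[V]\otimes(\bfk\dashv\oplus\bfk\vdash)^{\otimes|\vin(t)|}$ and the decoration rule of Proposition~\ref{reppath} depends only on the shape of $t$ and the leaf $x$, the construction is manifestly functorial in $V$, so two bases give the same ideal.
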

\begin{proof}
It is straightforward to check from the linearity of the duplicator
(resp. triplicator) and from the treewise tensor module structure on $\mathcal{T}_{\hspace*{-0.1cm}ns}(\gensp)$.
\end{proof}

We give some examples of duplicators and triplicators of nonsymmetric operads.

\begin{exam}
{\rm Let $\ass$ be the nonsymmetric operad of the associative algebra with product $\cdot$. Using the abbreviations $\dashv:=\svec{\cdot}{\dashv}$ and
$\vdash:=\svec{\cdot}{\vdash}$, we have
\begin{eqnarray*}
&\du_{y}((x \cdot y) \cdot z - x \cdot (y \cdot z)) = \{(x \vdash y)\dashv z-x \vdash (y \dashv z)\},&\\
&\du_{x}((x \cdot y) \cdot z - x \cdot (y \cdot z)) = \{(x \dashv y)\dashv z-x \dashv (y \dashv z), (x \dashv y)\dashv z-x \dashv (y \vdash z)\},&\\
&\du_{z}((x \cdot y) \cdot z - x \cdot (y \cdot z)) = \{(x \dashv y)\vdash z-x \vdash(y \vdash z), (x \vdash y)\vdash z-x \vdash(y \vdash z)\},&
\end{eqnarray*}
giving the five relations of the
{\bf diassociative algebra} of
Loday~\mcite{Lo2}. Therefore the duplicator of $\ass$ is $\diass$.
}
\end{exam}

\begin{exam}
{\rm A similar computation shows that the triplicator of $\ass$ is the operad $\mathit{Trias}$ of the {\bf triassociative algebra} of Loday and Ronco~\mcite{LR}. For example,
\begin{eqnarray*}
&\tdu_{\{x\}}((xy)z- x(yz)) = \{(x \dashv y) \dashv z - x \dashv (y \dashv z), (x \dashv y) \dashv z - x \dashv (y \vdash z),(x \dashv y) \dashv z - x \dashv (y \perp z)\},&\\
&\tdu_{\{x,y\}}((xy)z- x(yz)) = \{(x \perp y) \dashv z - x \perp (y \dashv z)\},&\\
&\tdu_{\{x,y,z\}}((xy)z- x(yz)) = \{(x \perp y) \perp z - x \perp (y \perp z)\}.&
\end{eqnarray*}
}
\end{exam}

\subsection{The replicators of a binary operad}

When $\gensp=\gensp(2)$ is an $\BS$-module concentrated in arity two with a linear basis $\genbas$.
%
For any finite set $\mathcal{X}$ of cardinal $n$, define the coinvariant space
$$ \gensp(\mathcal{X}):= \left( \displaystyle{\bigoplus_{f: \underline{n}\rightarrow \mathcal{X}}} \gensp(n) \right)_{\BS_{n}} \ ,$$
where the sum is over all the bijections from $\underline{n}:=\{ 1, \ldots , n \}$ to $\mathcal{X}$ and where the symmetric group acts diagonally.

Let $\mathbb{T}$ denote the set of isomorphism classes of reduced binary trees~\cite[Appendix C]{LV}. For $\textsf{t}\in \mathbb{T}$, define the treewise tensor $\BS$-module associated to $\textsf{t}$, explicitly given by
$$  \textsf{t}[\gensp] := \displaystyle{\bigotimes_{v \in \vin(\textsf{t})} } \gensp(\inv(v)) \ , $$
see~\cite[Section 5.5.1]{LV}.
Then the free operad $\mathcal{T}(\gensp)$ on an $\BS$-module $\gensp=\gensp(2)$ is given by the $\BS$-module
$$ \mathcal{T}(\gensp) := \displaystyle{ \bigoplus_{\textsf{t} \in \mathbb{T} } } \ \textsf{t}[\gensp] \,. $$

Each tree $\textsf{t}$ in $\mathbb{T}$ can be represented by a planar tree $t$ in $\calt$ by choosing a total order on the set of inputs of each vertex of $\textsf{t}$. Further, $t[\gensp]\cong \mathsf{t}[\gensp]$~\cite[Section 2.8]{Ho}. Fixing such a choice $t$ for each $\textsf{t}\in \mathbb{T}$ gives a subset $\mathfrak{R}\subseteq \calt$ with a bijection $ \mathbb{T} \cong \mathfrak{R}$.
Then we have
$$\mathcal{T}(\gensp)\cong \displaystyle{ \bigoplus_{t \in \mathfrak{R} } } \ t[\gensp] \ , $$
allowing us to use the notations in Section~\ref{repnsopd}.

\begin{defn}\mlabel{rule}
{\rm Let $\opd=\mathcal{T}(\gensp)/ (\relsp)$ be a binary operad where the $\BS$-module $\gensp$ is concentrated in arity $2$: $\gensp=\gensp(2)$ with an $\BS_2$-basis $\genbas$ and the space of relations is generated, as an $\BS$-module, by a set $R$ of locally homogeneous elements
\begin{equation}
r_s:=\sum_i c_{s,i}\tau_{s,i}, \ c_{s,i}\in\bfk, \tau_{s,i} \in \bigcup_{t\in \mathfrak{R}} t(\genbas), \ 1\leq s\leq k.
\mlabel{eq:pres}
\end{equation}
\begin{enumerate}
\item The {\bf duplicator} of $\opd$ is defined to be the binary operad
$$\du(\opd)=\mathcal{T}(\du(\gensp))/ (\du(\relsp))$$
where
the $\BS_2$-action on $\du(\gensp)=V\ot \left (\bfk\dashv \oplus\, \bfk \vdash\right)$ is given by
\begin{equation*}
\svec{\gop}{\dashv}^{(12)}:=\svec{\gop^{(12)}}{\vdash} \; ,\quad
\svec{\gop}{\vdash}^{(12)}:=\svec{\gop^{(12)}}{\dashv}\; , \;  \gop\in \gensp,
\end{equation*}
and the space of relations is generated, as an $\BS$-module, by
\begin{equation}
\du(R):= \bigcup_{s=1}^k\left(\bigcup_{x \in Lin(r_{s})}\du_x(r_{s})\right)
\text{ with }
\du_x(r_{s}):= \sum_ic_{s,i}\du_{x}(\tau_{s,i}).
\mlabel{reprelation}
\end{equation}
\item The {\bf triplicator} of $\opd$ is defined to be the binary operad
    $$\tdu(\opd)=\mathcal{T}(\tdu(\gensp))/ (\tdu(\relsp))
    $$
where the $\BS_2$-action on $\tdu(\gensp)=V\ot \left (\bfk\dashv \oplus\, \bfk \vdash\oplus\, \bfk \perp\right)$ is given by
\begin{equation*}
\svec{\gop}{\dashv}^{(12)}:=\svec{\gop^{(12)}}{\vdash} \; ,\quad \svec{\gop}{\vdash}^{(12)}:=\svec{\gop^{(12)}}{\dashv}\; , \quad
\svec{\gop}{\perp}^{(12)}:=\svec{\gop^{(12)}}{\perp} \; ,\;  \gop\in \gensp,
\end{equation*}
and the space of relations is generated, as an $\BS$-module, by
\begin{equation*}
\tdu(R):= \bigcup_{s=1}^k\left(\bigcup_{\varnothing\neq J \subseteq Lin(r_s)}
\tdu_{J}(r_{s})\right)
\text{ with }
\tdu_{J}(r_{s}) :=\sum_i c_{s,i}\tdu_{J}(\tau_{s,i}).
\end{equation*}
\end{enumerate}
}
\end{defn}
See\mcite{GK2} for the closely related notions of the di-Var-algebra and tri-Var-algebra, and~\mcite{KV} for these notions for not necessarily binary operads.
For later reference, we also recall the definitions of bisuccessors~\cite{BBGN}.

\begin{defn}
{\rm The {\bf bisuccessor}~\cite{BBGN} of a binary operad $\opd=\mathcal{T}(\gensp)/(\relsp)$ is defined to be the binary operad $\su(\opd)=\mathcal{T}( \widetilde{\gensp}  )/ (\su(\relsp))$ where
the $\BS_2$-action on $\widetilde{\gensp}$ is given by
\begin{equation}
\svec{\gop}{\prec}^{(12)}:=\svec{\gop^{(12)}}{\succ} \; ,\quad
\svec{\gop}{\succ}^{(12)}:=\svec{\gop^{(12)}}{\prec}\; , \;  \gop\in \gensp,
\notag 
\end{equation}
and the space of relations is generated, as an $\BS$-module, by
\begin{equation}
\su(R):= \left\lbrace  \su_x(r_{s}):=\sum_ic_{s,i}\su_{x}(t_{s,i})\;\Big| \; x\in \lin(r_s), \; 1\leq s\leq
k  \right\rbrace . \mlabel{eq:sup'}
\end{equation}
Here for $\tau \in  \mathcal{T}(V)$ and a leaf $x \in \lin(\tau)$, $\su_{x}(\tau)$ is defined by relabeling a vertex $\omega$ of $\vin(\tau)$ by
$$\left\{\begin{array}{ll}
\svec{\omega}{\prec}\,, &  \text{the path from the root of } \tau \text{ to } x \text{ turns left at } \omega;\\
\svec{\omega}{\succ}\,, & \text{the path from the root of } \tau \text{ to } x \text{ turns right at } \omega;\\
\svec{\omega}{\star}\,,& \text{$\omega$ is not on the path from the root of } \tau \text{\ to\ } x,
\end{array}
\right.$$
where $\svec{\omega}{\star}:=\left\{\svec{\omega}{\prec}+ \svec{\omega}{\succ}\right\}$.
}
\mlabel{de:bsu}
\end{defn}

There is a similar notion of a trisuccessor splitting an operation into three pieces ~\cite{BBGN}.

With an argument similar to the proof of Proposition 2.20 in \mcite{BBGN}, we see that the duplicator and triplicator of a binary algebraic operad $\opd=\mathcal{T}(\gensp)/ (R)$ depends neither on the linear basis $\genbas$ of $\gensp$ nor on the set $\mathfrak{R}$.

\subsection{Examples of duplicators and triplicators}
\mlabel{ss:exam}

We give some examples of duplicators and triplicators of binary operads.

Let $V$ be an $\BS$-module concentrated in arity two.
Then we have
$$\mathcal{T}(V)(3)=(V\otimes_{\BS_2}(V\otimes\bfk\oplus\bfk\otimes V))\otimes_{\BS_2}\bfk[\BS_3],$$
which can be identify with 3 copies of $V\otimes V$, denoted by $V\circ_{{\rm I}}V,V\circ_{{\rm II}}V$ and $V\circ_{{\rm III}}V$, following the convention in~\mcite{Va}.
Then, as an abelian group, $\mathcal{T}(V)(3)$ is generated by elements of the form
\begin{equation}
\gop \circ_{\rmi} \gopb (\oto (x\gopb y)\gop z), \gop\circ_{\rmii} \gopb (\oto (y\gopb z)\gop x), \gop\circ_{\rmiii} \gopb (\oto (z\gopb x)\gop y), \forall\gop, \gopb\in \gensp.
\mlabel{eq:type}
\end{equation}

For an operad where the space of generators $V$ is equal to $\bfk[\BS_2]=\mu.\bfk\oplus\mu'.\bfk$ with $\mu.(12)=\mu'$, we will adopt the convention in~\cite[p. 129]{Va} and denote the 12 elements of $\mathcal{T}(V)(3)$ by $v_i, 1\leq i\leq 12,$ in the following table.
\begin{center}
\begin{tabular}{|c|c|c|c|c|c|}
  \hline
  $v_1$ &  $\mu\circ_{{\rm I}}\mu\leftrightarrow(xy)z$ & $v_{5}$ & $\mu\circ_{{\rm III}}\mu\leftrightarrow(zx)y$ & $v_{9}$ & $\mu\circ_{{\rm II}}\mu\leftrightarrow(yz)x$ \\ \hline
 $v_{2}$ & $\mu'\circ_{{\rm II}}\mu\leftrightarrow x(yz)$ & $v_{6}$ & $\mu'\circ_{{\rm I}}\mu\leftrightarrow z(xy)$ & $v_{10}$ & $\mu'\circ_{{\rm III}}\mu\leftrightarrow y(zx)$ \\ \hline
  $v_{3}$ & $\mu'\circ_{{\rm II}}\mu'\leftrightarrow x(zy)$ & $v_{7}$ & $\mu'\circ_{{\rm I}}\mu'\leftrightarrow z(yx)$ & $v_{11}$ & $\mu'\circ_{{\rm III}}\mu'\leftrightarrow y(xz)$ \\ \hline
  $v_{4}$ & $\mu\circ_{{\rm III}}\mu'\leftrightarrow(xz)y$ & $v_{8}$ & $\mu\circ_{{\rm II}}\mu'\leftrightarrow(zy)x$ & $v_{12}$ & $\mu\circ_{{\rm I}}\mu'\leftrightarrow(yx)z$ \\
  \hline
\end{tabular}
\end{center}

\subsubsection{Examples of duplicators}
Recall that a {\bf (left) Leibniz algebra}~\mcite{Lo2} is defined by a bilinear operation $\{,\}$ and a relation $$\{x,\{y,z\}\} = \{\{x,y\},z\} + \{y,\{x,z\}\}.$$
\begin{prop}\label{leibniz}
The operad $\leib$ of the Leibniz algebra is the duplicator of $\lie$, the operad of the Lie algebra.
\mlabel{pp:leib}
\end{prop}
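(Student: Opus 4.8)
The plan is to compute $\du(\lie)$ directly from Definition~\ref{rule}, just as the duplicator of $\ass$ was identified with $\diass$ above, and to read off the left Leibniz relation inside $\du(\relsp)$ using the ``turning'' description of the duplicator in Proposition~\ref{reppath}.

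\textbf{Generators.} I would use the standard quadratic presentation $\lie=\mathcal{T}(\gensp)/(\relsp)$, in which $\gensp$ is the one-dimensional sign representation of $\BS_2$ spanned by the bracket $\gop=[-,-]$ (so $\gop^{(12)}=-\gop$) and $\relsp=\{r\}$ consists of the single Jacobi relator, which in the notation of~\eqref{eq:type} is
\[
r=\gop\circ_{\rmi}\gop+\gop\circ_{\rmii}\gop+\gop\circ_{\rmiii}\gop\qquad\oto\qquad [[x,y],z]+[[y,z],x]+[[z,x],y].
\]
Then $\du(\gensp)=\gensp\ot(\bfk\dashv\oplus\bfk\vdash)$ is two-dimensional, and the $\BS_2$-action prescribed in Definition~\ref{rule}, combined with $\gop^{(12)}=-\gop$, forces $\svec{\gop}{\dashv}^{(12)}=-\svec{\gop}{\vdash}$. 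Writing $\dashv:=\svec{\gop}{\dashv}$ and $\vdash:=\svec{\gop}{\vdash}$, this says $x\vdash y=-(y\dashv x)$, so $\du(\lie)$ carries essentially a single binary operation; put $\{x,y\}:=x\vdash y$, so that $x\dashv y=-\{y,x\}$. In particular $\du(\gensp)\cong\bfk[\BS_2]$, which is exactly the generating space of $\leib$.

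\textbf{Relations.} Because $r$ spans a copy of the sign representation of $\BS_3$, the $\BS$-submodule $\du(\relsp)=\bigcup_{w}\du_w(r)$ is already generated by $\du_x(r)$ for one leaf $x$. Applying Proposition~\ref{reppath} to the three left-comb trees of $r$ gives
\[
\du_x(r)=\Big\{(x\dashv y)\dashv z+(y\dashv z)\vdash x+(z\vdash x)\dashv y,\ \ (x\dashv y)\dashv z+(y\vdash z)\vdash x+(z\vdash x)\dashv y\Big\},
\]
a two-element set of relators. Substituting $x\vdash y=\{x,y\}$ and $x\dashv y=-\{y,x\}$ and simplifying, the first relator becomes $\{z,\{y,x\}\}-\{\{z,y\},x\}-\{y,\{z,x\}\}$ and the second $\{z,\{y,x\}\}+\{\{y,z\},x\}-\{y,\{z,x\}\}$; up to sign and a relabelling of the inputs, both equal the left Leibniz relator $\{x,\{y,z\}\}-\{\{x,y\},z\}-\{y,\{x,z\}\}$. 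Hence $\du(\relsp)$ and the left Leibniz relation generate the same operadic ideal, and therefore
\[
\du(\lie)=\mathcal{T}(\du(\gensp))/(\du(\relsp))=\leib.
\]

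\textbf{Checks and obstacle.} For a sanity check, $\dim\du(\gensp)=2=\dim\leib(2)$, and at arity three the relators cut the $12$-dimensional space $\mathcal{T}(\du(\gensp))(3)$ down to the $6$-dimensional $\leib(3)$. The only genuine work is sign bookkeeping: one must keep track of the sign produced by the antisymmetry $\gop^{(12)}=-\gop$ as it propagates through the vertex-relabelling of Proposition~\ref{reppath} and through the collapse of the two operations $\dashv$ and $\vdash$ into the single bracket $\{-,-\}$. Relatedly, one should confirm that although $\du(\relsp)$ is a priori larger (two relators for each of the three leaves), all of its members are $\BS_3$-translates, up to sign, of the one left Leibniz relator, so that nothing beyond the Leibniz identity is imposed.
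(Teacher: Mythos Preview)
Your proof is correct and follows essentially the same route as the paper's: compute the duplicator of the Jacobi relator via Proposition~\ref{reppath}, use the identity $\dashv^{(12)}=-\vdash$ coming from the antisymmetry of the bracket to collapse to a single operation $\{-,-\}$, and recognize both elements of the resulting two-element set as $\BS_3$-translates (up to sign) of the left Leibniz relator. The only cosmetic difference is that the paper computes $\du_z$ rather than $\du_x$ and rewrites everything in terms of $\vdash$ before renaming, whereas you substitute $\{x,y\}=x\vdash y$ and $x\dashv y=-\{y,x\}$ directly; your added observation that $r$ spans a sign representation, so one leaf suffices, is a clean way to phrase what the paper covers by ``similar computations for $\du_x$ and $\du_y$.''
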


\begin{proof}
Let $\mu$ denote the operation of the operad $\lie$. The space of relations of $\lie$ is generated as an $\BS_{3}$-module by
\begin{equation}
v_1+v_5+v_9=\mu\circ_{{\rm I}}\mu+\mu\circ_{{\rm II}}\mu+\mu\circ_{{\rm III}}\mu=(x\mu y)\mu z+(z\mu x)\mu y+(y\mu z)\mu x.
\mlabel{eq:lie}
\end{equation}
Use the abbreviations $\dashv:=\svec{\mu}{\dashv}$ and
$\vdash:=\svec{\mu}{\vdash}$. Then from $\svec{\mu}{\dashv}^{(12)}=\svec{\mu^{(12)}}{\vdash}=-\svec{\mu}{\vdash}$, we have
$\dashv^{(12)}=-\vdash$.
Then we have
\small{\begin{eqnarray*}
\du_z(v_1+v_5+v_9)&=&\{(x \vdash y)\vdash z + (y \vdash z) \dashv x + (z \dashv x)\dashv y,(x \dashv y)\vdash z + (y \vdash z) \dashv x + (z \dashv x)\dashv y\}\\
&=&\{\underline{(x \vdash y) \vdash z - x \vdash (y \vdash z) + y \vdash (x \vdash z)}, y \vdash (x \vdash z) - (y \vdash x) \vdash z - x \vdash (y \vdash z)\},
\end{eqnarray*}
}
with similar computations for $\du_x$ and $\du_y$.
Replacing the operation $\vdash$ by $\{,\}$, we see that the underlined relation is precisely the relation of the Leibniz algebra while the other relations are obtained from this relation by a permutation of the variables. Therefore $\du(\lie)=\leib$.
\end{proof}

Also recall that a {\bf (left) permutative
algebra}~\mcite{Ch} (also called commutative diassociative algebra) is defined by one bilinear operation $\cdot$ and
the relations
$$ x \cdot (y \cdot z)= (x \cdot y) \cdot z = (y \cdot x) \cdot z.$$

\begin{prop}
The operad $\Perm$ of the permutative algebra is the
duplicator of $\comm$, the operad of the commutative associative algebra.
\mlabel{pp:perm}
\end{prop}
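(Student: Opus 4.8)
The plan is to mimic the proof of Proposition~\ref{pp:leib}: fix an economical presentation of $\comm$, apply the duplicator to its generating relations, and recognize the output as the defining relations of $\Perm$.

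First I would record the presentation $\comm=\mathcal{T}(\gensp)/(\relsp)$ with $\gensp=\bfk\mu$ concentrated in arity two and carrying the \emph{trivial} $\BS_2$-action $\mu^{(12)}=\mu$, and with the space of relations generated, as an $\BS_3$-module, by the single associativity element $r=(x\mu y)\mu z-x\mu(y\mu z)$. One orbit suffices here: $\mathcal{T}(\gensp)(3)$ is three dimensional, $\comm(3)$ is one dimensional, and the two-dimensional space of relations is the irreducible standard representation of $\BS_3$, which is spanned by the $\BS_3$-orbit of $r$.

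Next I would unwind $\du(\comm)=\mathcal{T}(\du(\gensp))/(\du(\relsp))$. The space $\du(\gensp)=\gensp\otimes(\bfk\dashv\oplus\bfk\vdash)$ has basis $\{\svec{\mu}{\dashv},\svec{\mu}{\vdash}\}$, and because $\mu^{(12)}=\mu$ the $\BS_2$-action reads $\svec{\mu}{\dashv}^{(12)}=\svec{\mu}{\vdash}$; thus $\du(\gensp)\cong\bfk[\BS_2]$ is the regular representation, exactly the arity-two part of $\Perm$, and the two generators together amount to a single binary operation $\cdot$ with no symmetry (one being the opposite of the other). Abbreviating $\dashv:=\svec{\mu}{\dashv}$, $\vdash:=\svec{\mu}{\vdash}$ as in the $\lie$- and $\ass$-examples, I would use the path description of Proposition~\ref{reppath} to compute
\begin{align*}
\du_x(r)&=\{(x\dashv y)\dashv z-x\dashv(y\dashv z),\ (x\dashv y)\dashv z-x\dashv(y\vdash z)\},\\
\du_y(r)&=\{(x\vdash y)\dashv z-x\vdash(y\dashv z)\},\\
\du_z(r)&=\{(x\dashv y)\vdash z-x\vdash(y\vdash z),\ (x\vdash y)\vdash z-x\vdash(y\vdash z)\}.
\end{align*}
Rewriting these five elements in terms of $\cdot$ (using $\svec{\mu}{\dashv}^{(12)}=\svec{\mu}{\vdash}$ to replace one of $\dashv,\vdash$ by the opposite of the other), each reduces, once associativity of $\cdot$ is taken into account, either to associativity itself or to the permutative identity $(x\cdot y)\cdot z=(y\cdot x)\cdot z$ in one of its equivalent forms. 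Passing to $\BS_3$-orbits on both sides, $\du(\relsp)$ generates the same $\BS$-submodule of $\mathcal{T}(\du(\gensp))(3)$ as the associativity and permutativity relations, i.e. exactly the relation ideal of $\Perm$; hence $\du(\comm)=\Perm$.

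The only genuine obstacle is the verification in the last step that the $\BS_3$-orbit of $\{\du_x(r),\du_y(r),\du_z(r)\}$ is \emph{exactly} the $\Perm$-ideal, neither larger nor smaller. This is a finite computation inside the twelve-dimensional space $\mathcal{T}(\du(\gensp))(3)$: imposing the $\BS_3$-orbit of associativity leaves the six left-combed monomials, i.e. the arity-three part of $\ass$; the $\BS_3$-orbit of the remaining component of $\du_x(r)$ then cuts these down to a three-dimensional quotient, namely $\Perm(3)$, while the components coming from $\du_y(r)$ and $\du_z(r)$ add nothing further, and the reverse inclusion holds because each such component equals an associativity relation plus an $\BS_3$-translate of permutativity. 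Alternatively, once the material of Section~\ref{sec:mp} is in place, one may instead recognize $\du(\comm)$ as the Manin white product of $\Perm$ with $\comm$, which is $\Perm$; but the direct computation above parallels Proposition~\ref{pp:leib} and uses nothing beyond Proposition~\ref{reppath}.
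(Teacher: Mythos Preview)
Your proposal is correct and is essentially the paper's own argument. Note that since $\mu^{(12)}=\mu$, your generating relation $r=(x\mu y)\mu z-x\mu(y\mu z)$ equals $(x\mu y)\mu z-(y\mu z)\mu x=v_1-v_9$, which is exactly the paper's chosen generator; the paper then only displays $\du_z$ (your $\du_z(r)$, rewritten via $\dashv^{(12)}=\vdash$) and waves at the other two, whereas you spell out all three and add the dimension check in $\mathcal{T}(\du(\gensp))(3)$, but the method is identical.
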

\begin{proof}
Let $\gop$ denote the operation of the operad $\comm$. Setting $\dashv:=\svec{\gop}{\dashv}$ and
$\vdash:=\svec{\gop}{\vdash}$, then from $\svec{\gop}{\vdash}^{(12)} =\svec{\gop^{(12)}}{\vdash}=\svec{\gop}{\vdash}$ we have
$\dashv^{(12)}=\vdash$. The space of relations of $\comm$ is generated as an $\BS_{3}$-module by $$v_1 - v_9=\gop \circ_{{\rm I}}\gop - \gop \circ_{{\rm II}}\gop = (x \gop y) \gop z - (y \gop z) \gop x.$$
Then we have
\begin{eqnarray*}
\du_z(v_1 - v_9)&=&\{(x \dashv y) \vdash z - (y \vdash z) \dashv x, (x \vdash y) \vdash z - (y \vdash z) \dashv x\}\\
&=&\{\underline{(y \vdash x) \vdash z - x \vdash (y \vdash z),(x \vdash y) \vdash z - x \vdash (y \vdash z)}\},
\end{eqnarray*}
with similar computations for $\du_x$ and $\du_y$. Replacing the operation $\vdash$ by $\cdot$ and following the same proof as in Proposition~\mref{pp:leib}, we get $\du(\comm)=\Perm$.
\end{proof}

A {\bf (left) Poisson algebra} is
defined to be a $\bfk$-vectors space with two bilinear operations $\{,\}$ and $\circ$ such that
$\{,\}$ is the Lie bracket and $\circ$ is the product of commutative
associative algebra, and they are compatible in the sense that
$$\{x,y\circ z\}=\{x,y\}\circ z+y\circ\{x,z\}.$$
A {\bf dual (left) pre-Poisson algebra}~\mcite{Ag2} is defined to be a $\bfk$-vector space with two bilinear operations $\{\,,\}$ and $\circ$ such that
$\{\,,\}$ is a Leibniz bracket and $\circ$ is a product of permutative algebra, and they are compatible in the sense that
$$
\{x,y\circ z\}=\{x,y\} \circ z + y \circ \{x, z\},\
\{x \circ y, z\} = x \circ \{y,z\} + y \circ \{x,z\},\
\{x,y\} \circ z = -\{y,x\} \circ z.
$$

By a similar argument as in Proposition~\mref{pp:leib}, we obtain
\begin{prop}
The duplicator of {\it Pois}, the operad of the Poisson algebra, is
{\it DualPrePois}, the operad of the dual pre-Poisson algebra.
\mlabel{pp:prepois}
\end{prop}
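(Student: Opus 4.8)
The plan is to follow the proof of Proposition~\mref{pp:leib}, exploiting that the defining relations of the Poisson operad split into three families which the duplicator treats almost independently. Present $\mathit{Pois}=\mathcal{T}(\gensp)/(\relsp)$ with $\gensp=\gensp_\mu\oplus\gensp_\nu$, where $\gensp_\mu$ is the sign $\BS_2$-representation spanned by the bracket $\mu$ (so $\mu^{(12)}=-\mu$) and $\gensp_\nu$ is the trivial one spanned by the product $\nu$ (so $\nu^{(12)}=\nu$), and with $\relsp$ generated as an $\BS$-module by the Jacobi relation $r_J\in\mathcal{T}(\gensp_\mu)(3)$, the associativity relation $r_A\in\mathcal{T}(\gensp_\nu)(3)$, and the derivation relation $d:=\{x,y\circ z\}-\{x,y\}\circ z-y\circ\{x,z\}$. (Including the ``other'' derivation relation $\{x\circ y,z\}=x\circ\{y,z\}+y\circ\{x,z\}$ would change nothing, since given $\mu^{(12)}=-\mu$ and $\nu^{(12)}=\nu$ it lies in the $\BS$-submodule generated by $d$.) Since $\du(\gensp)=\du(\gensp_\mu)\oplus\du(\gensp_\nu)$, and $\du(r_J)$ (resp. $\du(r_A)$) involves only the generators $\svec{\mu}{\dashv},\svec{\mu}{\vdash}$ (resp. $\svec{\nu}{\dashv},\svec{\nu}{\vdash}$), Proposition~\mref{pp:leib} identifies the $\BS$-submodule generated by $\du(r_J)$ with the left Leibniz relation on $\svec{\mu}{\vdash}$, and Proposition~\mref{pp:perm} identifies the one generated by $\du(r_A)$ with the permutative relations on $\svec{\nu}{\vdash}$. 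Relabeling $\svec{\mu}{\vdash}$ as $\{\,,\}$ and $\svec{\nu}{\vdash}$ as $\circ$ (so that, via $\svec{\mu}{\dashv}^{(12)}=-\svec{\mu}{\vdash}$ and $\svec{\nu}{\dashv}^{(12)}=\svec{\nu}{\vdash}$, one has $a\svec{\mu}{\dashv}b=-\{b,a\}$ and $a\svec{\nu}{\dashv}b=b\circ a$), these supply exactly the ``Leibniz bracket'' and ``permutative product'' clauses in the definition of a dual pre-Poisson algebra.

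It then remains to compute $\du_x(d)$, $\du_y(d)$, $\du_z(d)$ and to identify the $\BS$-submodule they generate. I would apply the path description of Proposition~\mref{reppath} to the three trees $\{x,y\circ z\}$, $\{x,y\}\circ z$, $y\circ\{x,z\}$ appearing in $d$ --- recording in each case whether the path from the root to the distinguished leaf turns left at, turns right at, or misses the vertex carrying $\mu$ and the vertex carrying $\nu$ --- and then rewrite the outcome through the two $\BS_2$-relations above in terms of $\{\,,\}$ and $\circ$ only. The expected result: $\du_z(d)$ contains the first compatibility relation $\{x,y\circ z\}=\{x,y\}\circ z+y\circ\{x,z\}$; $\du_x(d)$ contains, up to a relabeling of the variables, the second one $\{x\circ y,z\}=x\circ\{y,z\}+y\circ\{x,z\}$; and the ``extra'' members of $\du_x(d)$, $\du_y(d)$, $\du_z(d)$ --- those arising from the two choices in $\svec{\cdot}{\rep}$ at the vertex off the path --- differ from relabeled copies of these by the third relation $\{x,y\}\circ z=-\{y,x\}\circ z$. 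Conversely, each of the three dual pre-Poisson relations occurs, up to the $\BS_3$-action, among the computed elements; here one uses that the second compatibility relation forces $\{a\circ b,c\}=\{b\circ a,c\}$, so the two members of $\du_x(d)$ lie in the same $\BS$-submodule, and similarly for $\du_y(d),\du_z(d)$. This gives $\du(\relsp)=$ the dual pre-Poisson relations, hence $\du(\mathit{Pois})=\mathit{DualPrePois}$.

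The substance of the argument is the bookkeeping in the second paragraph: carrying out the nine path computations without error, simplifying the resulting one- and two-element sets via the $\BS_2$-actions, and --- the delicate point I expect to be the main obstacle --- verifying the \emph{two-way} equality of $\BS$-submodules, i.e. that the relations produced by the $\svec{\cdot}{\rep}$-choices are consequences of the $\BS_3$-orbit of Aguiar's three relations, and conversely. Everything else is immediate from Propositions~\mref{pp:leib} and~\mref{pp:perm} together with the compatibility of the duplicator with the direct-sum decomposition $\gensp=\gensp_\mu\oplus\gensp_\nu$ and with the partition of $\relsp$ into $r_J$, $r_A$, $d$.
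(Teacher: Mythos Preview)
Your proposal is correct and is precisely the ``similar argument as in Proposition~\mref{pp:leib}'' that the paper invokes without spelling out: the paper's proof is a single sentence, and your plan --- reducing $\du(r_J)$ and $\du(r_A)$ to Propositions~\mref{pp:leib} and~\mref{pp:perm}, then computing $\du_x(d),\du_y(d),\du_z(d)$ via Proposition~\mref{reppath} and matching the result against Aguiar's three compatibility relations --- is exactly the omitted computation. Your identification of which duplicator produces which compatibility relation, and in particular that the $\svec{\cdot}{\rep}$-choices in $\du_z(d)$ yield the relation $\{x,y\}\circ z=-\{y,x\}\circ z$ as a difference, is accurate.
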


We next consider the duplicator of the the operad $\prelie$ of {\bf (left) pre-Lie} algebra (also called left-symmetric algebra). A pre-Lie algebra is defined by a bilinear operation $\{~,~\}$ that satisfies
$$
R_{\prelie}:= \{\{x,y\},z\} - \{x,\{y,z\}\} - \{\{y,x\},z\} + \{y,\{x,z\}\} =0.
$$

By Definition~\mref{rule} and the abbreviations $\dashv:=\svec{\gop}{\dashv}, \vdash:=\svec{\gop}{\vdash}$, we have
\begin{eqnarray*}
\du(R_{\prelie}) &=& \left\{ \underline{x \dashv (y \dashv z) - x \dashv (y \vdash z)},~y \dashv (x \dashv z) - y \dashv (x \vdash z),\right.\\
&&~\underline{(x \vdash y) \vdash z - (x \dashv y) \vdash z},~(y \vdash x) \vdash z - (y \dashv x) \vdash z,\\
&&~\underline{x \dashv (y \dashv z) - (x \dashv y) \dashv z -y \vdash(x \dashv z) + (y \vdash x) \dashv z}, \\
&&~x \vdash (y \dashv z) - (x \vdash y) \dashv z - y \dashv(x \dashv z) + (y \dashv x) \dashv z, \\
&&\left.~\underline{x \vdash (y \vdash z) - (x \vdash y) \vdash z  - y \vdash(x \vdash z) + (y \vdash x) \vdash z}\right\}
\end{eqnarray*}
These underline relations coincide with the axioms of preLie dialgebra (left-symmetric dialgebra) defined in~\mcite{F}, and the other relations are obtained from this relation by a permutation of the variables. Then we have

\begin{prop}
The duplicator of $\prelie$, the operad of the pre-Lie algebra,
 is {\it DipreLie}, the operad of the pre-Lie dialgebra.
\end{prop}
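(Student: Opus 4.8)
The plan is to mirror the proof of Proposition~\mref{pp:leib}: start from a presentation of $\prelie$, apply the explicit relabeling description of the duplicator (Proposition~\mref{reppath}), and match the resulting relations against those of the pre-Lie dialgebra. Concretely, I would first fix the standard presentation $\prelie=\mathcal{T}(V)/(R_{\prelie})$, where $V=\bfk[\BS_2]$ is concentrated in arity two with basis $\{\gop,\gop^{(12)}\}$ --- writing $\{a,b\}$ for the image of $\gop$ on inputs in the order $a,b$ --- and where the $\BS$-module of relations is generated by the single locally homogeneous element $R_{\prelie}\in\mathcal{T}(V)(3)$ recalled above. By Definition~\mref{rule} we then have $\du(\prelie)=\mathcal{T}(\du(V))/(\du(R_{\prelie}))$, with $\du(V)=V\ot(\bfk\dashv\oplus\,\bfk\vdash)$ carrying the prescribed $\BS_2$-action, and $\du(R_{\prelie})=\du_x(R_{\prelie})\cup\du_y(R_{\prelie})\cup\du_z(R_{\prelie})$.

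Next I would carry out the three computations $\du_x(R_{\prelie})$, $\du_y(R_{\prelie})$, $\du_z(R_{\prelie})$ summand by summand. For each of the four terms $\{\{x,y\},z\}$, $\{x,\{y,z\}\}$, $\{\{y,x\},z\}$, $\{y,\{x,z\}\}$ of $R_{\prelie}$ and each leaf $\ell\in\{x,y,z\}$, Proposition~\mref{reppath} prescribes relabeling the $\gop$ at a given vertex by $\svec{\gop}{\dashv}$, by $\svec{\gop}{\vdash}$, or by the pair $\{\svec{\gop}{\dashv},\svec{\gop}{\vdash}\}$, according as the path from the root to $\ell$ turns left there, turns right there, or misses that vertex; the abbreviations $\dashv=\svec{\gop}{\dashv}$ and $\vdash=\svec{\gop}{\vdash}$ stay valid throughout because all four terms use the same generator $\gop$, only the planar order of the leaves differing. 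Assembling the resulting subsets, and replacing certain members by $\bfk$-linear combinations (pairwise differences) within each homogeneous component --- which leaves the generated ideal unchanged --- reproduces precisely the seven-element set $\du(R_{\prelie})$ displayed just before the statement.

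Finally I would identify the outcome. Reading $\dashv$ and $\vdash$ as the two operations of a pre-Lie dialgebra, the four underlined members of $\du(R_{\prelie})$ are exactly the defining relations of the pre-Lie dialgebra (left-symmetric dialgebra) of \mcite{F}, while the three non-underlined members are obtained from underlined ones by permuting the variables $x,y,z$ and therefore already belong to the $\BS$-submodule generated by the latter; conversely the underlined relations sit inside $\du(R_{\prelie})$. Hence the operadic ideals generated by $\du(R_{\prelie})$ and by the defining relations of $\mathit{DipreLie}$ coincide, so $\du(\prelie)=\mathit{DipreLie}$. I expect the main friction to be the bookkeeping of the second step --- especially the terms $\{\{y,x\},z\}$ and $\{y,\{x,z\}\}$, where one must track that the path to $x$ enters its $\gop$-vertex from the right while the path to $y$ enters from the left --- together with the elementary check that the reduced seven-element generating set spans the same subspace as the raw union $\du_x(R_{\prelie})\cup\du_y(R_{\prelie})\cup\du_z(R_{\prelie})$; matching with \mcite{F} is then a routine comparison of relation lists.
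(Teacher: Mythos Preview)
Your proposal is correct and follows essentially the same approach as the paper: the paper simply applies Definition~\mref{rule} with the abbreviations $\dashv,\vdash$, displays the resulting seven-element set $\du(R_{\prelie})$ (obtained, as you anticipate, by taking pairwise differences within the raw $\du_x,\du_y,\du_z$ sets), identifies the underlined relations with the axioms of the pre-Lie dialgebra in~\mcite{F}, and observes that the remaining relations are permutations of these. Your more explicit bookkeeping plan and your identification of the friction points are accurate and match what the paper does implicitly.
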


\subsubsection{Examples of triplicators}
We similarly have the following examples of triplicators of operads.

A {\bf commutative trialgebra}~\mcite{LR} is a vector space $A$ equipped with a product $\star$ and a commutative product $\bullet$ satisfying the following equations:
$$
(x \star y) \star z = \star (y \star z),
x \star (y \star z) =x \star (y \bullet z),
x \bullet (y \star z) =(x \bullet y) \star z,
(x \bullet y) \bullet z =x \bullet (y \bullet z).
$$

\begin{prop}
The operad {\it ComTrias} of the commutative trialgebra is the triplicator of  {\it Comm}.
\end{prop}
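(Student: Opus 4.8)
The plan is to carry out, for the triplicator, the same computation that gave $\du(\comm)=\Perm$ in Proposition~\mref{pp:perm}, in parallel with the identification of the triplicator of $\ass$ with $\mathit{Trias}$ recorded above. First I recall the presentation used in that proof: $\comm=\mathcal{T}(V)/(R)$, where $V=\bfk\gop$ is concentrated in arity two with trivial $\BS_2$-action $\gop^{(12)}=\gop$, and where one may take $R=\{r\}$ with $r=v_1-v_9=(x\gop y)\gop z-(y\gop z)\gop x$. By Definition~\mref{rule}, $\tdu(\comm)=\mathcal{T}(\tdu(V))/(\tdu(R))$, where $\tdu(V)=V\ot(\bfk\dashv\oplus\bfk\vdash\oplus\bfk\perp)$ is three-dimensional on the basis $\svec{\gop}{\dashv},\svec{\gop}{\vdash},\svec{\gop}{\perp}$ and, because $\gop^{(12)}=\gop$, the $\BS_2$-action is $\svec{\gop}{\dashv}^{(12)}=\svec{\gop}{\vdash}$ and $\svec{\gop}{\perp}^{(12)}=\svec{\gop}{\perp}$. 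So in any $\tdu(\comm)$-algebra the operation $\svec{\gop}{\vdash}$ is the opposite of $\svec{\gop}{\dashv}$ while $\svec{\gop}{\perp}$ is commutative; writing $\star$ for one of $\svec{\gop}{\dashv},\svec{\gop}{\vdash}$ (the other then being its opposite) and $\bullet:=\svec{\gop}{\perp}$, the operad $\tdu(\comm)$ is generated by exactly one product $\star$ and one commutative product $\bullet$, the pair of operations underlying a commutative trialgebra.

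Next I would compute the relations. Since $R=\{r\}$, the relation space of $\tdu(\comm)$ is generated as an $\BS$-module by $\tdu(R)=\bigcup_{\varnothing\neq J\subseteq\{x,y,z\}}\tdu_J(r)$, the union over the seven nonempty subsets $J$ of $\lin(r)=\{x,y,z\}$. For each $J$ I apply the explicit rule of Proposition~\mref{treppath} to the two trees (each with two internal vertices) underlying $(x\gop y)\gop z$ and $(y\gop z)\gop x$: a vertex through which every root-to-$J$ path turns left (resp. right) is relabelled $\svec{\gop}{\dashv}$ (resp. $\svec{\gop}{\vdash}$), a vertex with both kinds of turns is relabelled $\svec{\gop}{\perp}$, and a vertex lying off all of those paths is replaced by the three-element set $\left\{\svec{\gop}{\dashv},\svec{\gop}{\vdash},\svec{\gop}{\perp}\right\}$. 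One then rewrites each resulting element in terms of $\star$ and $\bullet$, using that $\svec{\gop}{\dashv}$ and $\svec{\gop}{\vdash}$ are opposite and that $\bullet$ is commutative. The three singleton cases $\tdu_{\{x\}}(r),\tdu_{\{y\}}(r),\tdu_{\{z\}}(r)$, in each of which exactly one vertex of one of the two trees lies off the path and so splits into three labels, yield the associativity and permutativity identities for $\star$ together with a compatibility between $\star$ and $\bullet$; the three two-element cases $\tdu_{\{x,y\}}(r),\tdu_{\{x,z\}}(r),\tdu_{\{y,z\}}(r)$, in each of which the vertex descending to both elements of $J$ is forced to carry $\svec{\gop}{\perp}$, yield the remaining $\star$--$\bullet$ compatibilities; and $\tdu_{\{x,y,z\}}(r)$, in which every vertex of each tree descends to $J$ on both sides and hence must carry $\svec{\gop}{\perp}$, yields the associativity of $\bullet$. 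Collecting these and closing under the $\BS_3$-action, one recovers precisely the defining relations of $\comtrias$, whence $\tdu(\comm)=\comtrias$.

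The step I expect to be the main obstacle is this last matching. Because each $\tdu_J(r)$ in which some vertex lies off every root-to-$J$ path is genuinely a set of two or three relations, the raw list produced by the computation is redundant, and one has to verify — using that $\svec{\gop}{\dashv}$ and $\svec{\gop}{\vdash}$ are opposite, that $\bullet$ is commutative, and the $\BS_3$-symmetry of the relation space — that every relation so obtained already lies in the ideal of the defining relations of $\comtrias$, and conversely that every defining relation arises. This is a finite computation of exactly the same character as (though somewhat longer than) the one carried out for $\du(\comm)=\Perm$ in Proposition~\mref{pp:perm} and the one behind $\tdu(\ass)=\mathit{Trias}$; by the remark following Definition~\mref{de:bsu} its outcome is moreover independent of the auxiliary choices of linear basis $\genbas$ and of the planar representatives $\mathfrak{R}$.
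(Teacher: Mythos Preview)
Your proposal is correct and follows essentially the same approach as the paper: both take the presentation $r=v_1-v_9$ of $\comm$, compute the sets $\tdu_J(r)$ via the tree-relabeling rule of Proposition~\mref{treppath}, and then match the resulting relations (after identifying $\star$ with $\svec{\gop}{\dashv}$ and $\bullet$ with $\svec{\gop}{\perp}$, noting $\bullet$ is commutative) against the defining relations of $\comtrias$, with the remaining $J$'s handled by $\BS_3$-symmetry. The paper carries out only the representative cases $J=\{x\}$, $\{x,y\}$, $\{x,y,z\}$ explicitly and asserts the rest follow by permutation, exactly as you outline.
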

\begin{proof}
Let $\gop$ be the operation of the operad $\comm$. Set $\dashv:=\svec{\gop}{\dashv}$,
$\vdash:=\svec{\gop}{\vdash}$ and $\perp:=\svec{\gop}{\perp}$. Since $\svec{\gop}{\dashv}^{(12)}=\svec{\gop^{(12)}}{\vdash}=\svec{\gop}{\vdash}$
and $\svec{\gop}{\perp}^{(12)}=\svec{\gop^{(12)}}{\perp}=\svec{\gop}{\perp}$, we have
$\dashv^{(12)}=\vdash$ and $\perp^{(12)}=\;\perp$.The space of relations of $\comm$ is generated as an $\BS_{3}$-module by $$v_1 - v_9=\gop \circ_{{\rm I}}\gop - \gop \circ_{{\rm II}}\gop = (x \gop y) \gop z - (y \gop z) \gop x.$$
Then we have, for example,
\begin{eqnarray*}
\tdu_x(v_1 - v_9)&=& \{ (x \dashv y) \dashv z - (y \dashv z) \vdash x,  (x \dashv y) \dashv z - (y \vdash z) \vdash x, (x \dashv y) \dashv z -(y \perp z) \vdash x\}\\
&=&\{\underline{(x \dashv y) \dashv z - x \dashv (y \dashv z),  (x \dashv y) \dashv z - x \dashv (z \dashv y), (x \dashv y) \dashv z-  x \dashv (y \perp z)}\};\\
\tdu_{\{x,y\}}(v_1 - v_9)&=&\{\underline{(x \perp y) \dashv z - (y \dashv z) \perp x}\};\\
\tdu_{\{x,y,z\}}(v_1 - v_9)&=& \{(x \perp y) \perp z - (y \perp z) \perp x\} = \{\underline{(x \perp y) \perp z - x \perp (y \perp z)}\}.
\end{eqnarray*}
Replacing the operation $\dashv$ by $\star$ and $\perp$ by $\bullet$, we see that the underlined relations are equivalent to the relations of the commutative trialgebra.  The other relations can be obtained from these relations by a permutation of the variables and the commutativity of $\perp$.
Thus we get $\tdu(\comm)={\it ComTrias}$.
\end{proof}

We next consider the triplicator of $\lie$.
Let $\mu$ be the operation of the operad $\lie$. Set $\dashv:=\svec{\mu}{\dashv}$,
$\vdash:=\svec{\mu}{\vdash}$ and $\perp:=\svec{\mu}{\perp}$. Since $\svec{\mu}{\dashv}^{(12)}=\svec{\mu^{(12)}}{\vdash}=-\svec{\mu}{\vdash}$
and $\svec{\mu}{\perp}^{(12)}=\svec{\mu^{(12)}}{\perp}=-\svec{\mu}{\perp}$, we have
$\dashv^{(12)}=-\vdash$ and $\perp^{(12)}=-\;\perp$. The space of relations of $\lie$ is generated as an $\BS_{3}$-module by $$v_1+v_5+v_9=\mu\circ_{{\rm I}}\mu+\mu\circ_{{\rm II}}\mu+\mu\circ_{{\rm III}}\mu=(x\mu y)\mu z+(z\mu x)\mu y+(y\mu z)\mu x.$$
Then we compute
{\allowdisplaybreaks
\begin{eqnarray*}
\tdu_{\{x\}}(v_1+v_5+v_9)&=&\{(x \dashv y) \dashv z+(z \vdash x)\dashv y+(y \dashv z) \vdash x, (x \dashv y) \dashv z+(z \vdash x)\dashv y \\
&& +(y \vdash z) \vdash x, (x \dashv y) \dashv z+(z \vdash x)\dashv y+(y \perp z) \vdash x\}\\
&=&\{\underline{(x \dashv y) \dashv z - (x \dashv z)\dashv y - x \dashv (y \dashv z)}, (x \dashv y) \dashv z - (x \dashv z)\dashv y \\
&& + x \dashv (z \dashv y),\underline{(x \dashv y) \dashv z - (x \dashv z)\dashv y - x \dashv (y \perp z)}\};\\
\tdu_{\{x,y\}}(v_1+v_5+v_9)&=&\{\underline{(x\perp y)\dashv z+(z\vdash x)\perp y+(y\dashv z)\perp x}\};\\
\tdu_{\{x,y,z\}}(v_1+v_5+v_9)&=&\{\underline{(x\perp y)\perp z+(z\perp x)\perp y+(y\perp z)\perp x}\},
\end{eqnarray*}
}
and other computations yield the same relations up to permutations.

Replacing the operation $\dashv$ by $\diamond$ and $\perp$ by $[,]$, then $[\,,\,]$ is skew-symmetric and the underlined relations are
\begin{eqnarray}
&[x,[y,z]]+[y,[z,x]]+[z,[x,y]]=0,&\notag \\
&x\diamond[y,z] = x \diamond (y \diamond z),\notag\\
&[x,y] \diamond z = [x \diamond z, y] + [x, y \diamond z],& \label{eq:trileib}\\
&(x \diamond y) \diamond z = x \diamond (y \diamond z) + (x \diamond z) \diamond y .&\notag
\end{eqnarray}
Then in particular $(A,\diamond)$ is a right Leibniz algebra. Since the duplicator of $\lie$ is $\leib$, the operad of the Leibniz algebra, we tentatively call the new algebra {\bf triLeibniz algebra}.
In summary, we obtain

\begin{prop}
The triplicator of $\lie$ is $\TriLeib$, the operad of the triLeibniz algebra.\mlabel{prop:triLeib}
\end{prop}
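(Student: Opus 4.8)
The plan is to reproduce the computation already displayed before the statement and then verify that the relations it produces, together with their $\BS_3$-translates, generate exactly the defining relations of $\TriLeib$. First I would record that the space of relations of $\lie$ is generated as an $\BS$-module by the single element $r:=v_1+v_5+v_9=(x\mu y)\mu z+(z\mu x)\mu y+(y\mu z)\mu x$, and that $\mu^{(12)}=-\mu$ forces $\sigma\cdot r=\pm r$ for every $\sigma\in\BS_3$. By Definition~\ref{rule}, $\tdu(\lie)=\mathcal{T}(\tdu(\gensp))/(\tdu(R))$, where $(\tdu(R))$ is the $\BS$-module generated by $\bigcup_{\varnothing\neq J\subseteq\{x,y,z\}}\tdu_J(r)$. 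Since the triplicator is equivariant under relabeling of leaves (the property underlying the basis- and $\mathfrak{R}$-independence noted after Definition~\ref{de:bsu}), each $\tdu_J(r)$ is, up to sign and a permutation of variables, one of $\tdu_{\{x\}}(r)$, $\tdu_{\{x,y\}}(r)$, $\tdu_{\{x,y,z\}}(r)$, so it suffices to analyze these three.

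Second, I would carry out these three computations using the explicit description in Proposition~\ref{treppath}, with the abbreviations $\dashv:=\svec{\mu}{\dashv}$, $\vdash:=\svec{\mu}{\vdash}$, $\perp:=\svec{\mu}{\perp}$ and the induced identities $\dashv^{(12)}=-\vdash$, $\perp^{(12)}=-\perp$. The first identity lets me rewrite every occurrence of $\vdash$ via $a\vdash b=-(b\dashv a)$, and the second says $\perp$ is skew-symmetric; consequently $\tdu(\gensp)$ decomposes, as an $\BS_2$-module, into the regular representation spanned by $\dashv,\vdash$ (set $\diamond:=\dashv$) and the sign representation spanned by $\perp$ (set $[\,,\,]:=\perp$), which matches the generators of the triLeibniz operad. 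Performing the substitutions $\dashv\mapsto\diamond$, $\perp\mapsto[\,,\,]$ in $\tdu_{\{x\}}(r)$, $\tdu_{\{x,y\}}(r)$, $\tdu_{\{x,y,z\}}(r)$, exactly as in the proof of Proposition~\ref{pp:leib}, yields in particular the four relations~\eqref{eq:trileib}.

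Third, I would check that the $\BS_3$-submodule of $\mathcal{T}(\tdu(\gensp))(3)$ generated by all the $\tdu_J(r)$ equals the one generated by~\eqref{eq:trileib}. The inclusion $\supseteq$ is immediate since~\eqref{eq:trileib} occur among the $\tdu_J(r)$. For $\subseteq$, I would verify that each relation in the three lists that is not already one of~\eqref{eq:trileib} is a scalar multiple of a $\BS_3$-translate of one of them; for instance $(x\diamond y)\diamond z-(x\diamond z)\diamond y+x\diamond(z\diamond y)$ is minus the image of the fourth relation of~\eqref{eq:trileib} under the transposition $y\leftrightarrow z$, and the analogous claims for the remaining $\dashv$- and mixed $\dashv$/$\perp$-relations are of the same kind, using skew-symmetry of $\perp$ to close up the $\perp$-relations. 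Combined with the skew-symmetry of $[\,,\,]$ already encoded in $\tdu(\gensp)$, this shows the presentations of $\tdu(\lie)$ and of $\TriLeib$ coincide, hence $\tdu(\lie)=\TriLeib$.

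The step I expect to be the main obstacle is the last one: making sure that the full list of relations coming from \emph{all} nonempty $J$ — not just the three orbit representatives — collapses to~\eqref{eq:trileib} under the $\BS_3$-action, so that no spurious constraint is introduced. Here one must track the signs produced by $\mu^{(12)}=-\mu$ consistently and be especially careful with the mixed relations involving both $\diamond$ and $[\,,\,]$, which are the ones most likely to hide an independent relation.
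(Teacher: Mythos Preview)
Your proposal is correct and follows essentially the same approach as the paper: the paper's argument consists precisely of computing $\tdu_{\{x\}}(r)$, $\tdu_{\{x,y\}}(r)$, $\tdu_{\{x,y,z\}}(r)$ for $r=v_1+v_5+v_9$, noting that the remaining $\tdu_J(r)$ give the same relations up to permutation, and then identifying the resulting underlined relations with~\eqref{eq:trileib} after the substitutions $\dashv\mapsto\diamond$, $\perp\mapsto[\,,\,]$. Your third step spells out more carefully what the paper summarizes in one line (``other computations yield the same relations up to permutations''), but the structure and content of the argument are the same.
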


As we will see in Section~\mref{ss:dual}, $\TriLeib$ is precisely the Koszul dual of the operad $CTD=ComTriDend$ of the commutative tridendriform algebra, namely the {\bf Dual CTD algebra} in~\cite{Zi}.

We next show that $\TriLeib$ plays the same role for the triassociative algebra as the role of the Leibniz algebra for the diassociative algebra~\mcite{Fr2,LR}.

\begin{prop}
Let $(A,\dashv,\vdash,\perp)$ be an associative trialgebra. Define new binary operations by
$$x\diamond y:=x\dashv y-y\vdash x,\quad [x,y]:=x\perp y-y\perp x.$$
Then $(A,\diamond, [,])$ becomes a Leibniz trialgebra.\mlabel{prop:asleib}
\end{prop}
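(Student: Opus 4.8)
The plan is to check that $(A,\diamond,[\,,\,])$ satisfies the four defining identities of $\TriLeib$: skew-symmetry of $[\,,\,]$, the Jacobi identity for $[\,,\,]$, and the three mixed relations displayed in Eq.~(\mref{eq:trileib}). This is the trialgebra analogue of the two classical facts that antisymmetrizing an associative algebra produces a Lie algebra and that $x\diamond y=x\dashv y-y\vdash x$ makes a diassociative algebra into a Leibniz algebra~\mcite{Fr2,LR}, so I expect the same pattern of argument to work.

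My preferred route is conceptual, via functoriality of the triplicator. Antisymmetrization is a morphism of operads $\phi\colon\lie\to\ass$ sending the Lie generator $\mu_\lie$ to $\mu-\mu^{(12)}$, where $\mu$ generates $\ass$. Applying $\tdu$ and invoking Proposition~\mref{prop:triLeib} (which gives $\tdu(\lie)=\TriLeib$) together with the Example identifying $\tdu(\ass)=\mathit{Trias}$, we get a morphism $\tdu(\phi)\colon\TriLeib\to\mathit{Trias}$. I then read off its effect on generators using the $\BS_2$-actions in Definition~\mref{rule}: it sends $\svec{\mu_\lie}{\dashv}$ to $\svec{\mu}{\dashv}-\svec{\mu^{(12)}}{\dashv}=\svec{\mu}{\dashv}-\svec{\mu}{\vdash}^{(12)}$ and $\svec{\mu_\lie}{\perp}$ to $\svec{\mu}{\perp}-\svec{\mu}{\perp}^{(12)}$. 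Interpreted as operations on an associative trialgebra $(A,\dashv,\vdash,\perp)$, these are exactly $(x,y)\mapsto x\dashv y-y\vdash x$ and $(x,y)\mapsto x\perp y-y\perp x$. Since, by the proof of Proposition~\mref{prop:triLeib}, the two generators $\diamond$ and $[\,,\,]$ of $\TriLeib$ are $\svec{\mu_\lie}{\dashv}$ and $\svec{\mu_\lie}{\perp}$ respectively, the morphism $\tdu(\phi)$ realizes precisely the construction in the statement, and the induced functor from $\mathit{Trias}$-algebras to $\TriLeib$-algebras is the assertion.

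If functoriality of $\tdu$ is not available at this point, I would instead verify the four identities by hand from the defining relations of $\mathit{Trias}$ (as computed from the triplicator of $\ass$ in the Example above). Skew-symmetry of $[\,,\,]$ is built in; since $\perp$ is associative, $[\,,\,]$ is automatically a Lie bracket and Jacobi follows. For the right-Leibniz identity $(x\diamond y)\diamond z=(x\diamond z)\diamond y+x\diamond(y\diamond z)$ one expands all terms into $\{\dashv,\vdash\}$-words and reduces with the five diassociative axioms relating $\dashv$ and $\vdash$; the surviving four terms match on both sides. For $x\diamond[y,z]=x\diamond(y\diamond z)$ and for $[x,y]\diamond z=[x\diamond z,y]+[x,y\diamond z]$ one additionally needs the mixed axioms $(x\perp y)\dashv z=x\perp(y\dashv z)$, $(x\dashv y)\perp z=x\perp(y\vdash z)$ and $(x\vdash y)\perp z=x\vdash(y\perp z)$ together with associativity of $\perp$.

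The only real obstacle is bookkeeping. In the conceptual route everything is formal once the images of the two generators are identified correctly, so the sole point to be careful about is the $\BS_2$-action sign and label chase that turns $\mu-\mu^{(12)}$ into the operations $x\dashv y-y\vdash x$ and $x\perp y-y\perp x$. In the direct route the delicate computation is $[x,y]\diamond z=[x\diamond z,y]+[x,y\diamond z]$, which consumes several of the $\perp$-axioms simultaneously; I would tame it by first rewriting every monomial so that the distinguished operation ($\perp$, or the $\diamond$ built from $\dashv$ and $\vdash$) sits outermost, after which the cancellations become transparent, mirroring the diassociative-to-Leibniz computation of~\mcite{Fr2,LR}.
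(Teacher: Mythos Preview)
Your fallback route---direct verification of the four identities in Eq.~(\mref{eq:trileib})---is exactly the paper's approach: the paper expands $[x,y]\diamond z$ and $[x\diamond z,y]+[x,y\diamond z]$ and matches terms using the mixed $\perp/\dashv/\vdash$ axioms of the triassociative algebra, then asserts that the remaining identities follow ``in the same way''. Your identification of which axioms are needed is accurate.

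Your preferred conceptual route---apply $\tdu$ to the antisymmetrization morphism $\lie\to\ass$ and read off the induced $\TriLeib\to\mathit{Trias}$---is genuinely different and more illuminating: it explains the result rather than merely checking it, and it immediately generalizes (e.g.\ it also yields Proposition~\mref{leibniz}'s analogue for the duplicator with no further work). The cost is that functoriality of $\tdu$ on operad morphisms is not stated or proved in the paper prior to this proposition; you correctly flag this. It becomes available only after Theorem~\mref{thm:whiteproduct}, where $\tdu(\calp)\cong\comtrias\bigcirc\calp$ makes functoriality in $\calp$ transparent, but that theorem requires the binary-quadratic hypothesis with $R\neq 0$. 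Proving functoriality directly from Definition~\mref{rule} is not hard, but it does require an argument that a map $V\to W$ carrying $(R)$ into $(S)$ sends $(\tdu(R))$ into $(\tdu(S))$---this is a small lemma, not a triviality, since $\phi(r)$ for $r\in R$ lands only in the $\BS_3$-span of $S$, so one must check that $\tdu_J$ is compatible with the $\BS_3$-action. If you want to run the conceptual argument self-contained, insert that lemma; otherwise your fallback suffices and coincides with the paper.
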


\begin{proof}
By definition, for any $x,y,z\in A$, we have
$$[x,y]\diamond z=[x,y]\dashv z-z\vdash [x,y]=(x\perp y-y\perp x)\dashv z-z\vdash(x\perp y-y\perp x)$$
and
\begin{eqnarray*}
& &[x \diamond z, y] + [x, y \diamond z]\\
&=&[x\dashv z-z\vdash x, y]+[x,y\dashv z-z\vdash y]\\
&=&(x\dashv z-z\vdash x)\perp y-y\perp(x\dashv z-z\vdash x)+x\perp(y\dashv z-z\vdash y)-(y\dashv z-z\vdash y)\perp x\\
& &
\end{eqnarray*}
Since
$$(x\perp y)\dashv z=x\perp(y\dashv z),\; (y\perp x)\dashv z=y\perp(x\dashv z),\; z\vdash(x\perp y)=(z\vdash x)\perp y,$$
$$z\vdash(y\perp x)=(z\vdash y)\perp x,\; (x\dashv z)\perp y=x\perp(z\vdash y),\; y\perp(z\vdash x)=(y\dashv z)\perp x$$
in a triassociative algebra, we have
$[x,y]\diamond z = [x \diamond z, y] + [x, y \diamond z].$
The other defining equations of the triLeibniz algebra can be proved in the same way.
\end{proof}

Moreover, we have the following commuting diagram.

\begin{equation*}
\xymatrix{Leib\ar@{->}[d]_{\diamond\to(\diamond,0)} & &Diass \ar@{->}_{-}[ll]\ar@{->}_{(\vdash,\dashv)\to(\vdash,\dashv,0)}[d] \\
TriLeib & & Triass \ar@{->}_{-}[ll] }
\end{equation*}
It would be interesting to consider the left adjoint of the functor defined in the bottom line of the above diagram, which could be called {\bf the universal envelope algebra of a triLeibniz algebra} just as in~\mcite{Lo2}.

\subsection{Operads, their duplicators and triplicators}
In this section, we study the relationship among a binary operad, its duplicator and its triplicator.
\subsubsection{Operads and their duplicators and triplicators}

For a given $\BS$-module $V$ concentrated in ariry $2$: $V=V(2)$. Let $i_V:V\to \mathcal{T}(V)$ denote the natural embedding to the free operad $\mathcal{T}(V)$. Let $\calp:=\mathcal{T}(V)/(R)$ be a binary operad and let $j_V:V\to \calp$ be $p_V\circ i_V$, where $p_V:\mathcal{T}(V)\to \calp$ is the operad projection. Similarly define the maps $i_{\du(V)}:\du(V)\to \mathcal{T}(\du(V))$ and operad morphism $p_{\du(V)}: \mathcal{T}(\du(V))\to \du(\calp)$ and $j_{\du(V)}:=p_{\du(V)}\circ i_{\du(V)}$, as well as the corresponding map and operad morphisms for $\tdu(V)$.

\begin{prop}\mlabel{prop:quotient}
Let $\calp=\mathcal{T}(V)/(R)$ be a binary operad.
\begin{enumerate}
\item
The linear map
\begin{equation}
\eta: \du(V) \to V, \quad
\svec{\gop}{u} \longmapsto \omega\ \text{for all } \svec{\gop}{u}\in \du(V), u \in \{\dashv,\vdash\}
\mlabel{eq:dumap}
\end{equation}
induces a unique operad morphism
$$ \tilde{\eta}:\du(\calp)\to \calp$$
such that $\tilde{\eta}\circ j_{\du(V)} = j_V\circ \eta.$
\mlabel{it:quotienta}
\item
The linear map
\begin{equation}
\zeta: \tdu(V) \to V, \quad
\svec{\gop}{u} \longmapsto \omega\ \text{for all } \svec{\gop}{u}\in \tdu(V), u \in \{\dashv,\vdash,\perp\}
\mlabel{eq:tdumap}
\end{equation}
induces a unique operad morphism
$$ \tilde{\zeta}:\tdu(\calp)\to \calp$$
such that $\tilde{\zeta}\circ j_{\tdu(V)} = j_V\circ \zeta.$
\mlabel{it:quotientb}
\item
There is a morphism $\rho:\tdu(\calp)\to\calp$ of operads that extends the linear map from $\tdu(V)$ to $V$ defined by
\begin{equation}
\svec{\gop}{\perp} \longmapsto \omega,\quad \svec{\gop}{u} \longmapsto 0,\quad \mbox{where} ~u \in \{\dashv,\vdash\}.\mlabel{tdumap2}
\end{equation}
\mlabel{it:quotientc}
\end{enumerate}
\end{prop}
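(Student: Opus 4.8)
The plan is to invoke, in each of the three cases, the universal property of the free operad: a morphism of $\BS$-modules from the arity-two generators into a target operad extends uniquely to an operad morphism out of the free operad, and this descends to a quotient by an operadic ideal precisely when the prescribed generators of that ideal go to zero. So for (a) I first check that $\eta$ is $\BS_2$-equivariant, which is immediate from the $\BS_2$-action on $\du(V)$ fixed in Definition~\ref{rule}: $\eta\bigl(\svec{\gop}{\dashv}^{(12)}\bigr)=\eta\bigl(\svec{\gop^{(12)}}{\vdash}\bigr)=\gop^{(12)}=\eta\bigl(\svec{\gop}{\dashv}\bigr)^{(12)}$, and similarly for $\svec{\gop}{\vdash}$; composing with $j_V$ gives a morphism $\du(V)\to\calp$ of $\BS$-modules, hence an operad morphism $\overline{\eta}\colon\mathcal{T}(\du(V))\to\calp$. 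The identical computation handles $\zeta$ in (b). It then remains only to see that $\overline{\eta}$ kills $\du(R)$ and $\overline{\zeta}$ kills $\tdu(R)$.

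Parts (a) and (b) are formal once this is set up. By the explicit descriptions in Propositions~\ref{reppath} and~\ref{treppath}, every labeled tree occurring in $\du_x(\tau)$ (resp.\ in $\tdu_J(\tau)$) is obtained from $\tau$ by relabeling each vertex $\gop$ by one of $\svec{\gop}{\dashv},\svec{\gop}{\vdash}$ (resp.\ one of $\svec{\gop}{\dashv},\svec{\gop}{\vdash},\svec{\gop}{\perp}$), and $\eta$ (resp.\ $\zeta$) sends each of these back to $\gop$; since $\overline{\eta}$ and $\overline{\zeta}$ are operad morphisms they send \emph{every} element of $\du_x(\tau)$ (resp.\ $\tdu_J(\tau)$) to $p_V(\tau)$, regardless of the choices allowed by Propositions~\ref{reppath} and~\ref{treppath}. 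Hence $\overline{\eta}$ sends each element of $\du_x(r_s)=\sum_i c_{s,i}\du_x(\tau_{s,i})$ to $\sum_i c_{s,i}p_V(\tau_{s,i})=p_V(r_s)=0$, and likewise $\overline{\zeta}$ annihilates $\tdu_J(r_s)$; so $\du(R)$ and $\tdu(R)$, hence the operadic ideals they generate, lie in the respective kernels, and $\overline{\eta},\overline{\zeta}$ descend to operad morphisms $\tilde{\eta}\colon\du(\calp)\to\calp$ and $\tilde{\zeta}\colon\tdu(\calp)\to\calp$. Uniqueness is automatic because $\du(V)$ generates $\du(\calp)$ and $\tdu(V)$ generates $\tdu(\calp)$, while $\tilde{\eta}\circ j_{\du(V)}=j_V\circ\eta$ and $\tilde{\zeta}\circ j_{\tdu(V)}=j_V\circ\zeta$ fix the values on generators.

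For (c) the same setup applies — the linear map $\tdu(V)\to V$, $\svec{\gop}{\perp}\mapsto\gop$, $\svec{\gop}{\dashv},\svec{\gop}{\vdash}\mapsto0$, is $\BS_2$-equivariant by the action in Definition~\ref{rule} and, post-composed with $j_V$, extends to an operad morphism $\overline{\rho}\colon\mathcal{T}(\tdu(V))\to\calp$ — and the main obstacle is to show $\overline{\rho}(\tdu(R))=0$. Because operadic composition in $\calp$ is multilinear, $\overline{\rho}$ annihilates any labeled tree carrying a vertex decorated by $\svec{\gop}{\dashv}$ or $\svec{\gop}{\vdash}$, so the crux is a combinatorial refinement of Proposition~\ref{treppath}: for $\varnothing\neq J\subseteq\lin(\tau)$, if $J=\lin(\tau)$ then, $\tau$ being binary so that every vertex is mixed with respect to $J$, the set $\tdu_{\lin(\tau)}(\tau)$ is the single tree obtained by relabeling every vertex $\gop$ by $\svec{\gop}{\perp}$, on which $\overline{\rho}$ takes the value $p_V(\tau)$; whereas if $J\subsetneq\lin(\tau)$ then $\tdu_J(\tau)$ has a vertex forcibly relabeled by a definite $\svec{\gop}{\dashv}$ or $\svec{\gop}{\vdash}$, so $\overline{\rho}$ kills every tree of $\tdu_J(\tau)$. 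I would prove the latter by choosing a leaf $y\in\lin(\tau)\setminus J$ and descending from the root toward $y$: along this path the property ``the subtree rooted here meets $J$'' holds at the root and can switch off at most once, so there is a vertex $v$ on the path whose subtree meets $J$ while the subtree of its child on the path does not; then all $J$-leaves below $v$ lie in the subtree of $v$ opposite to the path, so $v$ is not mixed and Proposition~\ref{treppath} relabels it by a definite $\svec{\gop}{\dashv}$ or $\svec{\gop}{\vdash}$. Granting this, for a relator $r_s=\sum_i c_{s,i}\tau_{s,i}$ (where $\lin(\tau_{s,i})=\lin(r_s)$ by homogeneity) and $\varnothing\neq J\subseteq\lin(r_s)$: if $J\subsetneq\lin(r_s)$ then $\overline{\rho}$ kills every element of $\tdu_J(r_s)=\sum_i c_{s,i}\tdu_J(\tau_{s,i})$ summand by summand, while if $J=\lin(r_s)$ then $\tdu_J(r_s)$ is a single element whose $\overline{\rho}$-image is $\sum_i c_{s,i}p_V(\tau_{s,i})=p_V(r_s)=0$. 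Hence $\tdu(R)\subseteq\ker\overline{\rho}$, so $\overline{\rho}$ descends to the desired $\rho\colon\tdu(\calp)\to\calp$. This combinatorial lemma is precisely what makes the ``partial'' relators $\tdu_J(r_s)$ with $J$ proper invisible to $\rho$, leaving only the all-$\svec{\gop}{\perp}$ relators, which impose the defining relations of $\calp$ on the operation $\svec{\gop}{\perp}$.
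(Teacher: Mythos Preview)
Your proposal is correct and follows the same approach as the paper: invoke the universal property of the free operad, check that the induced map on $\mathcal{T}(\du(V))$ or $\mathcal{T}(\tdu(V))$ sends the relation generators into $(R)$ by using the explicit relabeling descriptions in Propositions~\ref{reppath} and~\ref{treppath}, and then descend to the quotient. Your treatment of (c) is in fact more careful than the paper's, which simply asserts that $\rho$ kills $\tdu_J(r_s)$ when $J\neq\lin(r_s)$, whereas you supply the explicit path-to-$y$ argument locating a vertex forcibly labeled $\svec{\gop}{\dashv}$ or $\svec{\gop}{\vdash}$.
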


\begin{proof}
Let $R$ be the set of locally homogeneous elements
$$
r_s:=\sum_i c_{s,i}\tau_{s,i}, \ c_{s,i}\in\bfk, \tau_{s,i} \in \bigcup_{t\in \mathfrak{R}} t(\genbas), \ 1\leq s\leq k,
$$
as given in Eq.(\mref{eq:pres}).

(\ref{it:quotienta}) By the universal property of the free operad $\mathcal{T}(\du(V))$ on the $\BS$-module $\du(V)$, the $\BS$-module morphism
$i_V\circ \eta: \du(V)\to \mathcal{T}(V)$
induces a unique operad morphism
$ \free{\eta}:\mathcal{T}(\du(V))\to \mathcal{T}(V)$
such that $i_{\du(V)}\circ \free{\eta} = i_V\circ \eta$.

For any $x\in \lin(r_s)$ and $1\leq s\leq k$, by the description of $\du_{x}(\tau_{s,i}))$ in Proposition~\mref{reppath} and the definition of $\eta$ in Eq.~(\mref{eq:dumap}), the element $\eta(\du(\tau_{s,i}))$ is obtained by replacing each decoration $\svec{\gop}{u}$ of the vertices of $\du(\tau_{s,i})$ by $\gop$, where $\gop \in V$ and $u \in \{\dashv, \vdash\}$. Thus $\free{\eta} (\du(\tau_{s,i})) = \tau_{s,i}$. Then we have
$$
\free{f}\left(\sum_{i}c_{s,i}\du_{x}(\tau_{s,i})\right) = \sum_ic_{s,i}\tau_{s,i} \equiv 0 \mod(R).
$$
By Eq.~(\mref{reprelation}), we see that $(\du(R))  \subseteq \ker (\eta)$. Thus there is a unique operad morphism $\tilde{\eta}: \du(\calp):=\mathcal{T}(\du(V))/(\du(R)) \to \calp:=\mathcal{T}(V)/(R)$ such that $\tilde{\eta}\circ p_{\du(V)}=p_V\circ \free{\eta}.$ We then have
$\tilde{\eta}\circ j_{\du(V)}=j_V \circ \eta$.
In summary, we have the following diagram in which each square commutes.
$$ \xymatrix{ \du(V) \ar^{\eta}[d] \ar^{i_{\du(V)}}[rr] && \mathcal{T}(\du(V)) \ar^{\free{\eta}}[d] \ar^{p_{\du(V)}}[rr] & & \du(\calp)
\ar^{\tilde{\eta}}[d] \\
V\ar^{i_V}[rr] && \mathcal{T}(V) \ar^{p_V}[rr] & & \calp
}
$$
Suppose $\tilde{\eta}':\du(\calp)\to \calp$ be another operad morphism such that $\tilde{\eta}'\circ j_{\du(V)}=j_V\circ \eta$. Then we have
$\tilde{\eta}'\circ j_{\du(V)}=\tilde{\eta}'\circ p_{\du(V)}\circ i_{\du(V)}$ and
$j_V\circ \eta=p_V\circ j_V\circ \eta=p_V\circ \free{\eta}\circ i_{\du(V)}.$ By the universal property of the free operad $\mathcal{T}(\du(V))$, we obtain
$\tilde{\eta}'\circ p_{\du(V)} = p_V\circ \free{\eta} = \tilde{\eta}\circ p_{\du(V)}.$
Since $p_{\du(V)}$ is surjective, we obtain $\tilde{\eta}'=\tilde{\eta}.$ This proves the uniqueness of $\tilde{\eta}.$

(\ref{it:quotientb}) The proof is similar to the proof of Item (\ref{it:quotienta}).

(\ref{it:quotientc}) By the description of $\tdu_{\{x\}}(\tau_{s,i})$ in Proposition~\mref{treppath}, $\rho(\tdu_{\{x\}}(\tau_{s,i}))$ is obtained by replacing $\svec{\gop}{u}$ by $\rho(\svec{\gop}{u})$.
Since $\rho(\svec{\gop}{\dashv})=0$, $\rho(\svec{\gop}{\vdash})=0$ and $\rho(\svec{\gop}{\perp})= \omega$, it is easy to see that if $J \neq Lin(\tau)$, then
$\rho(\sum_{i}c_{s,i}\tdu_{J}(\tau_{s,i})) =\sum_ic_{s,i}\tau_{s,i} = 0,
$
and, if $J = Lin(\tau)$, then $\rho(\sum_{i}c_{s,i}\tdu_{Lin(\tau)}(\tau_{s,i}))= \sum_ic_{s,i}\tau_{s,i} \equiv 0 \mod(R).
$
Thus $\rho(\tdu(R))\subseteq R$ and $\rho$ induces the desired operad morphism.
\end{proof}

\subsubsection{Relationship between duplicators and triplicators of a binary operad}

The following result relates the duplicator and the triplicator of a binary algebraic operad.
\begin{prop}
Let $\calp=\mathcal{T}(V)/(R)$ be a binary algebraic operad. There is a morphism of operads from $\tdu(\calp)$ to $\du(\calp)$ that extends the linear map defined by
\begin{equation}
\svec{\gop}{\dashv}\to \svec{\gop}{\dashv},\quad  \svec{\gop}{\vdash}\to \svec{\gop}{\vdash},\quad \svec{\gop}{\perp}\to 0,\quad \gop\in\gensp.\mlabel{eq:sutsze}
\end{equation}
\mlabel{it:rests2}
\end{prop}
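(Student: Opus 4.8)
The plan is to mimic the proof of Proposition~\ref{prop:quotient}, in particular Item~(\ref{it:quotientc}), working with the explicit description of the triplicator from Proposition~\ref{treppath} and of the duplicator from Proposition~\ref{reppath}. First I would define the linear map $\phi:\tdu(V)\to\du(V)$ by $\svec{\gop}{\dashv}\mapsto\svec{\gop}{\dashv}$, $\svec{\gop}{\vdash}\mapsto\svec{\gop}{\vdash}$, $\svec{\gop}{\perp}\mapsto 0$, and check it is a morphism of $\BS_2$-modules: this is immediate from the two $\BS_2$-actions given in Definition~\ref{rule}, since $\svec{\gop}{\dashv}^{(12)}=\svec{\gop^{(12)}}{\vdash}$ and $\svec{\gop}{\perp}^{(12)}=\svec{\gop^{(12)}}{\perp}$ are respected by $\phi$ (the $\perp$ part being killed on both sides). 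By the universal property of the free operad $\mathcal{T}(\tdu(V))$, the composite $i_{\du(V)}\circ\phi:\tdu(V)\to\mathcal{T}(\du(V))$ extends uniquely to an operad morphism $\bar\phi:\mathcal{T}(\tdu(V))\to\mathcal{T}(\du(V))$ with $i_{\du(V)}\circ\bar\phi=i_{\tdu(V)}\circ\phi$ (abusing notation, $\bar\phi$ applied to a labeled tree just relabels each vertex $\svec{\gop}{u}$ by $\phi(\svec{\gop}{u})$, sending a tree to $0$ as soon as one of its vertices is labeled $\perp$).

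\textbf{The key step} is then to verify that $\bar\phi((\tdu(R)))\subseteq(\du(R))$, so that $\bar\phi$ descends to the desired morphism $\tdu(\calp)=\mathcal{T}(\tdu(V))/(\tdu(R))\to\du(\calp)=\mathcal{T}(\du(V))/(\du(R))$. Fix a relator $r_s=\sum_i c_{s,i}\tau_{s,i}$ and a nonempty $J\subseteq\lin(r_s)$; I must show $\bar\phi\big(\sum_i c_{s,i}\tdu_J(\tau_{s,i})\big)$ lies in $(\du(R))$. There are two cases. If $|J|\ge 2$, then by Proposition~\ref{treppath} some vertex $\gop$ lies on the paths to two leaves of $J$ that diverge, forcing at least one vertex of each $\tdu_J(\tau_{s,i})$ to carry the label $\svec{\gop}{\perp}$ (case (1)(iii), or else a $\perp$ is forced further down where the two paths separate); hence $\bar\phi$ kills every tree in $\tdu_J(\tau_{s,i})$ and the image is $0\in(\du(R))$. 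If $J=\{x\}$ is a singleton, comparison of the relabeling rule for $\tdu_{\{x\}}$ in Proposition~\ref{treppath} with that for $\du_x$ in Proposition~\ref{reppath} shows they agree \emph{except} that where $\du_x$ puts the two-element set $\svec{\gop}{\rep}=\{\svec{\gop}{\dashv},\svec{\gop}{\vdash}\}$ on a vertex off the path, $\tdu_{\{x\}}$ puts the three-element set $\{\svec{\gop}{\dashv},\svec{\gop}{\vdash},\svec{\gop}{\perp}\}$. Applying $\bar\phi$ to the set $\tdu_{\{x\}}(\tau_{s,i})$: each tree with only $\dashv,\vdash$ labels (off-path) maps to the corresponding tree of $\du_x(\tau_{s,i})$, while each tree using a $\perp$ on some off-path vertex maps to $0$. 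Using the convention of Eq.~(\ref{eq:ssum}) for sums of subsets, this gives $\bar\phi\big(\sum_i c_{s,i}\tdu_{\{x\}}(\tau_{s,i})\big)=\sum_i c_{s,i}\du_x(\tau_{s,i})\,\cup\,\{0\}$ (the $0$ arising from any summand choosing a $\perp$-labeled off-path tree for at least one $i$; note $0$ is automatically in $(\du(R))$), and $\sum_i c_{s,i}\du_x(\tau_{s,i})=\du_x(r_s)\subseteq\du(R)$ by Eq.~(\ref{reprelation}). Hence every element of $\bar\phi(\tdu_J(r_s))$ lies in $(\du(R))$, as needed.

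\textbf{Finishing up.} Once $\bar\phi((\tdu(R)))\subseteq(\du(R))$ is established, the induced map $\tilde\phi:\tdu(\calp)\to\du(\calp)$ exists and is unique with $\tilde\phi\circ p_{\tdu(V)}=p_{\du(V)}\circ\bar\phi$, exactly as in the last paragraph of the proof of Proposition~\ref{prop:quotient}(\ref{it:quotienta}) (surjectivity of $p_{\tdu(V)}$ gives uniqueness). By construction $\tilde\phi$ extends the linear map~(\ref{eq:sutsze}), completing the proof.

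\textbf{Anticipated main obstacle.} The routine parts — functoriality of $\mathcal{T}(-)$ and the $\BS_2$-equivariance of $\phi$ — are immediate. The delicate point, and the one requiring care in the write-up, is the singleton case $J=\{x\}$: one must argue precisely that, after applying $\bar\phi$ and using the subset-sum convention of Eq.~(\ref{eq:ssum}), the extra trees carrying a $\perp$-label on an off-path vertex contribute only $0$ and therefore do not enlarge the image beyond $\du_x(r_s)\cup\{0\}\subseteq(\du(R))$; and that when $|J|\ge 2$ a $\perp$ is \emph{always} forced somewhere in every tree of $\tdu_J(\tau_{s,i})$ (this is where Proposition~\ref{treppath} is used essentially, via the fact that the paths to two distinct leaves of $J$ must eventually branch, and the branch vertex — together with all vertices above it on the common initial segment, if those serve divergent leaves — receives a $\perp$ or a $\rep$ containing $\perp$; the only subtlety is making sure at least one genuine $\perp$ — not merely a $\rep$ that happens to allow $\dashv$ — is forced, which is precisely rule (1)(iii) at the branch vertex).
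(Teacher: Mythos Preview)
Your proposal is correct and follows essentially the same approach as the paper: define the $\BS_2$-equivariant map $\phi$ on generators, extend to free operads, and verify $\phi(\tdu(R))\subseteq(\du(R))$ by the same two-case analysis (singleton $J=\{x\}$ versus $|J|\ge 2$) using Propositions~\ref{reppath} and~\ref{treppath}. If anything, your treatment of the singleton case is slightly more careful than the paper's, since you explicitly track that trees carrying a $\perp$ on an off-path vertex are sent to $0$ and hence the image is $\du_x(r_s)\cup\{0\}$ rather than literally $\du_x(r_s)$.
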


\begin{proof}

The linear map $\phi:\tdu(V)\to \du(V)$ defined by Eq.(\mref{eq:sutsze}) is $\BS_{2}$-equivariant. Hence it induces a morphism of the free operads $\phi: \mathcal{T}(\tdu(V))\to \mathcal{T}(\du(V))$ which, by composing with the quotient map, induces the morphism of operads
$$ \phi: \mathcal{T}(\tdu(V))\to \du(\calp)=\mathcal{T}(\du(V))/(\du(R)).$$
Let $\tdu_J(r)\in \tdu(R)$ be one of the generators of $(\tdu(R))$ with $r=\sum_i c_i \tau_i \in R$ in Eq.~(\mref{eq:pres}) and $\varnothing\neq J\subseteq \lin(r)$.
If $J$ is the singleton $\{x\}$ for some $x\in \lin(r)$, then by the description of $\tdu_{\{x\}}(\tau_i)$ in Proposition~\mref{treppath}, $\phi(\tdu_{\{x\}}(\tau))$ is obtained by keeping all the $\svec{\gop}{\dashv}$ and $\svec{\gop}{\vdash}$, and by replacing all $\svec{\gop}{\perp}, \gop\in \gensp$ by zero. Thus in Case (b) of Proposition~\mref{treppath} we have $\phi(\tdu_{\{x\}}(\tau_i))=\du_x(\tau_i)$. Also Case (a)(iii) cannot occur for the singleton $\{x\}$. Thus in Case (a) of Proposition~\mref{treppath}, we also have
$\phi(\tdu_{\{x\}}(\tau_i))=\du_x(\tau_i)$. Thus $\phi(\tdu_{\{x\}}(r))=\du_x(r)$ and hence is in $\du(R)$.

If $J$ contains more than one element, then at least one of the vertices of $\tdu_J(\tau_i)$ is $\svec{\gop}{\perp}$ and hence the corresponding vertex of $\phi(\tsu_J(\tau_i))$ is zero. Thus we have
$\phi(\tdu_{J}(\tau_{i})) = 0$, $\phi(\tdu_J(r))=0$ and hence $\phi(\tdu_J(R))=0$.
Thus, for any $J\neq \varnothing$ and $r\in R$, we have $\phi(\tdu_J(r)) \in \du(R)$ and hence $\phi(\tdu(R))$ is a subset of $\du(R)$.

In summary, we have $\phi((\tdu(R))\subseteq \du(R)$. Thus the morphism $\phi:\mathcal{T}(\tdu(\gensp))\to \du(\calp)$ induces a morphism
$\phi:   \tdu(\calp) \to \du(\calp).$
\end{proof}


If we take $\calp$ to be the operad of the associative algebra, then we obtain the following result of Loday and Ronco~\mcite{LR}:
\begin{coro}
Let $(A,\dashv,\vdash)$ be an associative dialgebra. Then $(A,\dashv,\vdash,0)$ is an associative trialgebra, where 0 denotes the trivial product.
\end{coro}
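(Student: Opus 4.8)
The plan is to obtain the statement as an immediate corollary of Proposition~\mref{it:rests2}, specialized to $\calp=\ass$. Recall from the computations above that the duplicator of $\ass$ is the operad $\diass$ of associative dialgebras and that the triplicator of $\ass$ is the operad $\mathit{Trias}$ of associative trialgebras; under these identifications the distinguished generators $\svec{\cdot}{\dashv},\svec{\cdot}{\vdash}$ of $\du(\ass)$ are the two products of a dialgebra, and $\svec{\cdot}{\dashv},\svec{\cdot}{\vdash},\svec{\cdot}{\perp}$ of $\tdu(\ass)$ are the three products $\dashv,\vdash,\perp$ of a trialgebra. Hence Proposition~\mref{it:rests2} provides a morphism of operads $\phi:\mathit{Trias}\to\diass$ which on generators is $\svec{\cdot}{\dashv}\mapsto\svec{\cdot}{\dashv}$, $\svec{\cdot}{\vdash}\mapsto\svec{\cdot}{\vdash}$, $\svec{\cdot}{\perp}\mapsto 0$, as in Eq.~(\mref{eq:sutsze}).

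The second step is to push this through the contravariant passage from operad morphisms to functors on the associated categories of algebras, as recalled in the introduction: a morphism of operads $f:\calp\to\calq$ turns any $\calq$-algebra $A$, with structure morphism $\theta:\calq\to\End_A$, into a $\calp$-algebra via $\theta\circ f$, and a generator $\mu$ of $\calp$ then acts on $A$ as the operation $\theta(f(\mu))$. Applying this to $\phi:\mathit{Trias}\to\diass$ and to an associative dialgebra $(A,\dashv,\vdash)$ --- whose structure morphism $\theta$ sends $\svec{\cdot}{\dashv}$ to $\dashv$ and $\svec{\cdot}{\vdash}$ to $\vdash$ --- we find that $A$ carries an associative trialgebra structure in which the three products are $\theta(\phi(\svec{\cdot}{\dashv}))=\dashv$, $\theta(\phi(\svec{\cdot}{\vdash}))=\vdash$, and $\theta(\phi(\svec{\cdot}{\perp}))=\theta(0)=0$. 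This is exactly the assertion that $(A,\dashv,\vdash,0)$ is an associative trialgebra.

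I do not expect any genuinely hard step: Proposition~\mref{it:rests2} carries all the weight and what remains is formal. The one place that calls for a little care is the first step --- verifying that the abstract source and target of the operad morphism in Proposition~\mref{it:rests2}, when $\calp=\ass$, really are $\mathit{Trias}$ and $\diass$ with generators identified with the named products, so that the clause ``$\svec{\cdot}{\perp}\mapsto 0$'' on the operad level genuinely becomes ``the middle product is the zero map'' on the algebra level. One could instead bypass Proposition~\mref{it:rests2} and check directly that $(A,\dashv,\vdash,0)$ satisfies the defining relations of an associative trialgebra, but that amounts to re-running the proof of Proposition~\mref{it:rests2} in this special case, so invoking the proposition is the economical route.
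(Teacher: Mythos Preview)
Your proposal is correct and follows exactly the paper's own argument: the corollary is obtained by specializing Proposition~\mref{it:rests2} to $\calp=\ass$, using that $\du(\ass)=\diass$ and $\tdu(\ass)=\mathit{Trias}$. The paper leaves the contravariant passage from operad morphisms to functors on algebras implicit, whereas you spell it out, but the substance is identical.
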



\section{Duality of replicators with successors and Manin products}
\mlabel{sec:mp}
The similarity between the definitions of the replicators and successors~\mcite{BBGN} suggests that there is a close relationship between the two constructions. We show that this is indeed the case. More precisely, taking the replicator of a binary quadratic operad is in Koszul dual with taking the successor of the dual operad.
This in particular allows us to identify the duplicator (resp. triplicator) of a binary quadratic operad $\calp$ with the Manin white product of $\Perm$ (resp. {\it ComTrias}) with $\calp$, providing an easy way to compute these white products. Since it is shown in~\mcite{GK2} that taking di-Var and tri-Var are also isomorphic to taking these Manin products, taking duplicator (resp. triplicator) is isomorphic to taking di-Var (resp. tri-Var) other than the case of free operads.

\subsection{The duality of replicators with successors} \mlabel{ss:dual}

Let $\opd=\mathcal{T}(\gensp)/(R)$ be a binary quadratic operad. Then with the notations in Section~\mref{ss:exam}, we have $\mathcal{T}(\gensp)(3)=3 \gensp \ot \gensp=\bigoplus_{u\in \{\mathrm{I,II,III}\}} V\circ_u V$.

\begin{prop}
Let $\bfk$ be an infinite field. Let $W$ be a nonzero $\BS$-submodule of $3\gensp \ot \gensp$. Then there is a basis $\{e_1,\cdots,e_n\}$ of $\gensp$ such that the restriction to $W$ of the coordinate projections
$$ p_{i,j,u}: 3\gensp \ot \gensp = \bigoplus_{1\leq k,\ell\leq n,v\in \{\mathrm{I,II,III}\}} \bfk\, e_k\circ_v e_\ell \to \bfk\, e_i\circ_u e_j,$$
are nonzero and hence surjective for all $1\leq i, j\leq n$ and $u\in \{\mathrm{I,II,III}\}$.
\mlabel{pp:generic}
\end{prop}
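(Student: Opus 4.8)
The statement is a genericity assertion: for a fixed nonzero $\BS$-submodule $W$ of the $12n^2$-dimensional space $3\gensp\ot\gensp$, we must find a single basis of $\gensp$ that works \emph{simultaneously} for all $12n^2$ coordinate projections $p_{i,j,u}$. The natural approach is to parametrize choices of basis by the group $\GL(\gensp)=\GL_n(\bfk)$ and to show that the set of ``bad'' bases—those for which at least one $p_{i,j,u}|_W$ vanishes—is a proper closed subset in the Zariski topology, hence its complement is nonempty because $\bfk$ is infinite. So the first step is to set up: fix an arbitrary initial basis $\{f_1,\dots,f_n\}$ of $\gensp$, let $g\in\GL_n(\bfk)$ act to produce a new basis $e_k=e_k(g)$, and observe that each projection $p_{i,j,u}(g)$ restricted to $W$ is given by a matrix whose entries are polynomial functions of $g$ (and in fact of $g^{-1}$, or one can clear denominators using $\det g$). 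The condition that $p_{i,j,u}|_W$ is the zero map is the simultaneous vanishing of all these polynomial entries, hence a closed condition on $g$; the condition that it is \emph{nonzero} is Zariski-open.

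\textbf{Key steps in order.} (1) Reduce ``nonzero implies surjective'' to ``nonzero'': since the target $\bfk\,e_i\circ_u e_j$ is one-dimensional, any nonzero linear map into it is automatically surjective, so this is immediate and requires no genericity. (2) For \emph{each} fixed triple $(i,j,u)$, show that the set $U_{i,j,u}\subseteq\GL_n(\bfk)$ of $g$ for which $p_{i,j,u}|_W\neq 0$ is nonempty and Zariski-open. Openness is clear from the polynomial description above. For nonemptiness: because $W$ is a nonzero $\BS$-submodule and the $\BS_3$-action permutes the three ``slots'' I, II, III (and via $\BS_2\subseteq\BS_3$ on each slot acts by the flip of the two tensor factors), it suffices to find a nonzero component of $W$ in \emph{some} slot; the $\BS$-action then shows $W$ has nonzero image under the analogous projection in every slot, and it remains only to see that for a suitable choice of basis the projection onto a \emph{specific} basis bidegree $(i,j)$ is nonzero—this is the standard fact that a nonzero subspace of $\gensp\ot\gensp$ cannot have \emph{all} its coordinates (in the product basis $e_k\ot e_\ell$) vanish for \emph{every} choice of basis, which one proves by picking a nonzero $w\in W$, writing it as a sum of elementary tensors in some basis, and noting a generic change of basis makes any prescribed coordinate nonzero. (3) Since $\GL_n(\bfk)$ is irreducible as a variety (it is a principal open subset of affine space $\mathbb{A}^{n^2}$, cut out by $\det\neq 0$) and $\bfk$ is infinite, a nonempty Zariski-open subset is dense; in particular the finite intersection $\bigcap_{i,j,u} U_{i,j,u}$ over all $1\leq i,j\leq n$ and $u\in\{\mathrm{I,II,III}\}$ is again nonempty. (4) Any $g$ in this intersection yields the desired basis $\{e_1,\dots,e_n\}=\{g\cdot f_1,\dots,g\cdot f_n\}$.

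\textbf{The main obstacle.} The only genuinely substantive point is Step (2): verifying that for \emph{every} pair $(i,j)$ (not just some pair) the projection $p_{i,j,u}|_W$ can be made nonzero by choosing the basis appropriately, so that the corresponding $U_{i,j,u}$ is nonempty. One must be slightly careful that the basis change acts on the two tensor factors in a correlated way (the same $g$ on both), so that not every coordinate is independently controllable; nevertheless, starting from any nonzero $w=\sum_k a_k\,(\text{something})$ and conjugating, a Vandermonde-type or direct argument shows that for generic $g$ the coefficient of $e_i\circ_u e_j$ in $g\cdot w$ is nonzero for any prescribed $(i,j,u)$—here the hypothesis that $W$ is stable under the $\BS$-action (so that the three slots are interchangeable and within a slot the flip is available) is what guarantees no slot and, after a change of basis, no coordinate is forced to vanish. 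Once this nonemptiness is in hand, the rest is the soft irreducibility-of-$\GL_n$ argument over an infinite field. I expect the author's proof to proceed exactly along these lines, possibly phrasing Step (3) as ``a finite intersection of dense opens in an irreducible variety is dense.''
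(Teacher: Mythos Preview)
Your proposal is correct and follows essentially the same route as the paper. The paper's proof is simply a more explicit, elementary incarnation of your Zariski-open argument: it fixes a nonzero $w\in W$, uses the cycle $(123)$ to assume $w_{\mathrm{I}}\neq 0$, writes the $(i,j)$-coordinate of $w_{\mathrm{I}}$ in a variable basis as the bilinear polynomial $f_{ij}(d_{rs})=\sum c_{k\ell}d_{ki}d_{\ell j}$, and then chooses $D=(d_{rs})$ so that the single polynomial $\det(d_{rs})\cdot\prod_{i,j}f_{ij}(d_{rs})$ does not vanish (this is exactly your ``finite intersection of nonempty opens in $\GL_n$ is nonempty over an infinite field,'' carried out by hand). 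The slots $\mathrm{II},\mathrm{III}$ are then handled by applying $(123)$ to $w$ rather than by separately analyzing $U_{i,j,\mathrm{II}}$ and $U_{i,j,\mathrm{III}}$, but this is only a cosmetic difference from your plan. One small inaccuracy worth flagging: your parenthetical that ``$\BS_2\subseteq\BS_3$ on each slot acts by the flip of the two tensor factors'' is not quite the action here (the transpositions mix slots and also act through the $\BS_2$-module structure on $V$), but you do not actually use this claim, so it does not affect the argument.
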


\begin{proof}
Fix a $0\neq w\in W$ and write $w=w_{\mathrm{I}}+w_{\mathrm{II}}+w_{\mathrm{III}}$ with $w_u\in V\circ_u V, u\in \{\mathrm{I,II,III}\}$. Then at least one of the three terms is nonzero. Since $W$ is an $\BS$-module and $(w_{u})^{(123)} = w_{u+I}$ (where $\mathrm{III}+\mathrm{I}$ is taken to be $\mathrm{I}$), we might assume that $w\in W$ is chosen so that $w_\mathrm{I}\neq 0$. Fix a basis $\{v_1,\cdots,v_n\}$ of $V$. Then there are $c_{ij}\in \bfk, 1\leq i, j\leq n,$ that are not all zero such that $w_\mathrm{I}=\sum_{1\leq i,j\leq n} c_{ij}u_i\circ_\mathrm{I} u_j$.

Consider the set of polynomials
$$ f_{k\ell}(x_{rs}):=f_{k\ell} (\{x_{rs}\}):=\sum_{1\leq i, j\leq n} c_{ij}x_{ik}x_{j\ell} \in \bfk [x_{rs}\,|\, 1\leq r,s\leq n], \quad 1\leq k, \ell\leq n.$$
Then the polynomial
$\prod_{1\leq k,\ell\leq n} f_{k\ell}(x_{rs})$
is nonzero since at least one of $c_{ij}$ is nonzero, giving a monomial
$\prod_{1\leq r, s\leq n} c_{ij} x_{i r}x_{j s}$
in the product with nonzero coefficient. Hence the product
$$f(x_{rs}):=\det(x_{rs})\prod_{1\leq k,\ell\leq n} f_{k\ell}(x_{rs})$$
is nonzero since $\det(x_{rs}):=\prod_{\sigma\in \BS_n} x_{1 \sigma(1)}\cdots x_{n \sigma(n)}$ is also a nonzero polynomial. Thus, by our assumption that $\bfk$ is an infinite field, there are $d_{rs}\in \bfk, 1\leq r, s\leq n,$ such that
$f(d_{rs})\neq 0$. Thus $D:=(d_{rs})\in M_{n\times n}(\bfk)$ is invertible and $f_{k\ell}(d_{rs})\neq 0, 1\leq k, \ell\leq n$.

Fix such a matrix $D=(d_{rs})$ and define $$(e_1,\cdots,e_n)^T:=D^{-1}(v_1,\cdots,v_n)^T.$$
Then $\{e_1,\cdots, e_n\}$ is a basis of $V$ and  $v_i=\sum_{k=1}^n d_{ik}e_k$. Further

$$
w_{\mathrm{I}}= \sum_{1\leq i,j\leq n} c_{ij}v_i\circ_\mathrm{I} v_j
= \sum_{1\leq i,j\leq n} c_{ij}
\left( \sum_{1\leq k,\ell n} d_{ik}d_{j\ell} e_k \circ_\mathrm{I} e_{\ell} \right)
=
\sum_{1\leq k,\ell\leq n} \left (\sum_{1\leq i, j\leq n} c_{ij}d_{ik}d_{j\ell}\right) e_k \circ_\mathrm{I} e_{\ell}.
$$
The coefficients are $f_{k\ell}(d_{rs})$ and are nonzero by the choice of $D$. Thus $p_{i,j,\mathrm{I}}(w) =p_{i,j,\mathrm{I}}(w_\mathrm{I})$ is nonzero and hence $p_{i,j,\mathrm{I}}(W)$ is onto for all $1\leq i, j\leq n$.

Since $W$ is an $\BS$-module, we have $w^{(123)}\in W$ and $(w^{(123)})_\mathrm{II}=(w_\mathrm{I})^{(123)}$.
Thus $p_{i,j,\mathrm{II}}(w^{(123)}) =p_{i,j,\mathrm{II}}((w_\mathrm{I})^{(123)})$ is nonzero and hence $p_{i,j,\mathrm{II}}(W)$ is onto for all $1\leq i, j\leq n$. By the same argument, $p_{i,j,\mathrm{III}}(W)$ is onto for all $1\leq i, j\leq n$, completing the proof.
\end{proof}

\begin{lemma}
Let $W$ be a nonzero $\BS$-submodule of $3V\ot V$ and let $\{e_1,\cdots,e_n\}$ be a basis as chosen in Proposition~\mref{pp:generic}. Let $\{r_1,\cdots,r_m\}$ be a basis of $U$ and write
$$ r_k = \sum_{1\leq i, j\leq n} c^\ell_{iju} e_i\circ_u e_j, \quad c^k_{iju}\in \bfk, 1\leq i,j\leq n, u\in \{\mathrm{I,II,III}\}, 1\leq k\leq m.$$
Then for each $1\leq i,j\leq n$ and $u\in \{\mathrm{I,II,III}\}$, there is $1\leq k\leq m$, such that $c^\ell_{iju}$ is not zero.
\mlabel{lem:genbas}
\end{lemma}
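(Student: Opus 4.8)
The plan is to obtain this as an immediate consequence of Proposition~\mref{pp:generic}, the content being purely a change of viewpoint: Proposition~\mref{pp:generic} says that \emph{every} element of the coordinate component $\bfk\,e_i\circ_u e_j$ is hit by some element of the submodule, whereas the lemma asks that this already be visible in a fixed basis of that submodule. First I would fix indices $1\le i,j\le n$ and $u\in\{\mathrm{I},\mathrm{II},\mathrm{III}\}$, and recall that the submodule $U$ appearing in the statement is the nonzero $\BS$-submodule $W$ of $3\gensp\ot\gensp$ to which Proposition~\mref{pp:generic} has been applied, so that the basis $\{e_1,\dots,e_n\}$ is exactly the one furnished there. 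By that proposition, the restriction $p_{i,j,u}|_U$ of the coordinate projection $p_{i,j,u}\colon 3\gensp\ot\gensp\to\bfk\,e_i\circ_u e_j$ is surjective; since the target space $\bfk\,e_i\circ_u e_j$ is one-dimensional, surjectivity is the same as being nonzero, so there exists $w\in U$ with $p_{i,j,u}(w)\neq 0$.

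Next I would expand $w$ in the given basis of $U$, writing $w=\sum_{k=1}^m a_k r_k$ with $a_k\in\bfk$, and apply the linear map $p_{i,j,u}$. Using $p_{i,j,u}(r_k)=c^k_{iju}\,(e_i\circ_u e_j)$ this yields
\[
p_{i,j,u}(w)=\Big(\sum_{k=1}^m a_k c^k_{iju}\Big)\,(e_i\circ_u e_j).
\]
Since the left-hand side is nonzero, the scalar $\sum_{k=1}^m a_k c^k_{iju}$ is nonzero, hence at least one summand $a_k c^k_{iju}$ is nonzero, and in particular $c^k_{iju}\neq 0$ for that index $k$. Because $i,j,u$ were arbitrary, this proves the assertion.

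I do not expect any real obstacle here; the lemma is a bookkeeping corollary of Proposition~\mref{pp:generic}. The only points needing a word of care are notational: one should make explicit that the module called $U$ in the statement is the submodule $W$ of Proposition~\mref{pp:generic} (so the basis $\{e_1,\dots,e_n\}$ is legitimately the ``generic'' one), and one should note that the superscript in $c^k_{iju}$ is precisely the index of the basis vector $r_k$, so that applying $p_{i,j,u}$ to a linear combination of the $r_k$ produces the corresponding linear combination of the $c^k_{iju}$.
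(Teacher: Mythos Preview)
Your proposal is correct and takes essentially the same approach as the paper: both arguments deduce the lemma directly from the surjectivity in Proposition~\ref{pp:generic}, noting that if every coefficient $c^k_{iju}$ vanished then $p_{i,j,u}$ would annihilate all of $W$. The paper phrases this as a one-line proof by contradiction while you give the direct version, but the content is identical.
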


\begin{proof}
Suppose there is $1\leq i,j\leq n$ and $u\in \{\mathrm{I,II,III}\}$ such that $c^k_{iju}=0$ for all $1\leq k\leq m$. Then $p_{iju}(r_k)=0$ and hence $p_{iju}(W)=0$. This contradicts Proposition~\mref{pp:generic}.
\end{proof}

Let $\opd=\mathcal{T}(\gensp)/ (R)$ be a binary quadratic operad. Fix a $\bfk$-basis $\{e_{1},e_{2},\cdots,e_{n}\}$ for $(R)$. The space $\mathcal{T}(\gensp)(3)$ is spanned by the basis $\{e_{i} \circ_{u} e_{j}~|~ 1 \leq i,j \leq n, u \in \{\rm I,II,III\}\}$. Thus if $f \in \mathcal{T}(V)(3)$, we have
$$
f = \sum_{i,j}a_{i,j} e_{i} \circ_{\rm I} e_{j} + \sum_{i,j}b_{i,j}e_{i} \circ_{\rm II} e_{j} +\sum_{i,j}c_{i,j}e_{i} \circ_{\rm III} e_{j}.
$$
Then we can take the relation space $(R) \subset \mathcal{T}(V)(3)$ to be generated by $m$ linearly independent relations
\begin{equation}
R = \left\{ f_{k} = \sum_{i,j}a_{i,j}^{k} e_{i} \circ_{\rm I} e_{j} + \sum_{i,j}b_{i,j}^{k}e_{i} \circ_{\rm II} e_{j} +\sum_{i,j}c_{i,j}^{k}e_{i} \circ_{\rm III} e_{j}~\Big|~ 1\leq k \leq m \right\}.
\mlabel{relation}
\end{equation}

We state the following easy fact for later applications.
\begin{lemma}
Let $f_i, 1\leq i\leq m,$ be a basis of $(R)$. Then
$\{\bsu_x(f_i)\,|\, x\in \lin(f_i), 1\leq i\leq m\}$ is a linear spanning set of $(\bsu(R))$ and $\{\du_x(f_i)\,|\, x\in \lin(f_i), 1\leq i\leq m\}$ is a linear spanning set of $(\du(R))$.
\mlabel{lem:smod}
\end{lemma}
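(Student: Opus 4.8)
The plan is to reduce the statement to two facts essentially built into the definitions: the $\bfk$-linearity of $\bsu_x$ and $\du_x$ on homogeneous elements, and their $\BS_3$-equivariance. Since the relations of a binary quadratic operad live in arity three, it suffices to work inside $\mathcal{T}(\widetilde{V})(3)$ (resp. $\mathcal{T}(\du(V))(3)$): there the ideal $(\bsu(R))$ (resp. $(\du(R))$) agrees with the $\BS_3$-submodule generated by the set $\bsu(R)$ of Definition~\mref{de:bsu} (resp. the set $\du(R)$ of Definition~\mref{rule}), hence is the $\bfk$-span of the $\BS_3$-translates $\bsu_x(r_s)^{\sigma}$ (resp. of the members of $\du_x(r_s)^{\sigma}$), where $r_s$ runs over the chosen $\BS$-module generators of $(R)$, $x\in\lin(r_s)$ and $\sigma\in\BS_3$.

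The one point requiring care is the equivariance: for $\sigma\in\BS_3$ and homogeneous $g\in\mathcal{T}(V)(3)$,
$$\bsu_x(g^{\sigma})=\big(\bsu_{\sigma^{-1}(x)}(g)\big)^{\sigma},\qquad \du_x(g^{\sigma})=\big(\du_{\sigma^{-1}(x)}(g)\big)^{\sigma},$$
the second as an identity of subsets (with the convention~(\mref{eq:ssum})). I would prove this by tracing a single vertex along the relevant root-to-leaf paths. The content is exactly the bookkeeping that makes $\bsu(\opd)$ and $\du(\opd)$ well-defined operads: permuting the leaf labels can change the chosen planar representative of a tree, replacing a vertex decoration $\gop$ by $\gop^{(12)}$, but the $\BS_2$-actions $\svec{\gop}{\prec}^{(12)}=\svec{\gop^{(12)}}{\succ}$ and $\svec{\gop}{\dashv}^{(12)}=\svec{\gop^{(12)}}{\vdash}$ were chosen precisely so that swapping the two children of a vertex interchanges ``turns left'' with ``turns right'' and fixes the symmetric decorations $\svec{\gop}{\star}$ and $\svec{\gop}{\rep}$. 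This is the analogue of the computation behind \cite[Proposition~2.20]{BBGN}, and I would either cite it or include the short verification.

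Granting linearity and equivariance, the two inclusions are formal. For ``$\subseteq$'': a generator of $(\bsu(R))$ in arity three has the form $\bsu_x(r_s)^{\sigma}=\bsu_{\sigma(x)}(r_s^{\sigma})$; since $\{f_i\}$ is a $\bfk$-basis of the relation subspace $(R)\subseteq\mathcal{T}(V)(3)$, write $r_s^{\sigma}=\sum_i b_i f_i$ and apply $\bsu_{\sigma(x)}$ linearly to land in the $\bfk$-span of $\{\bsu_y(f_i)\}$. For ``$\supseteq$'': expand $f_i=\sum_{s,\sigma}a_{s,\sigma}r_s^{\sigma}$ in terms of the $\BS$-module generators of $(R)$, so that linearity and equivariance give $\bsu_x(f_i)=\sum_{s,\sigma}a_{s,\sigma}\big(\bsu_{\sigma^{-1}(x)}(r_s)\big)^{\sigma}\in(\bsu(R))$. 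Reading the sums with the subset convention~(\mref{eq:ssum}) and replacing $\bsu$ by $\du$, the same two computations handle the duplicator case. Thus the only real obstacle is stating the equivariance cleanly; everything else is formal bookkeeping.
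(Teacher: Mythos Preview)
Your proposal is correct and follows essentially the same approach as the paper: both arguments rest on the $\bfk$-linearity of $\bsu_x$ (resp.\ $\du_x$) and the $\BS_3$-equivariance identity you state, which the paper invokes by citing \cite[Lemma~2.6]{BBGN}. The only organizational difference is that the paper packages the equivariance as the single assertion that the span $L$ of $\{\bsu_x(f_i)\}$ is already an $\BS$-submodule, so that $\bsu(R)\subseteq L$ immediately gives $(\bsu(R))\subseteq L$, whereas you unfold the $\BS_3$-translates explicitly; the content is identical.
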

\begin{proof}
Let $L$ be the linear span of $\{\bsu_x(f_i)\,|\, x\in \lin(f_i), 1\leq i\leq m\}$. Then from $\bsu_x(f_i)\in (\bsu(R))$ we obtain $L\subseteq (\bsu(R))$. On the other hand, by \cite[Lemma 2.6]{BBGN}, $L$ is already an $\BS$-submodule. Thus from $\bsu(R)\subseteq L$ we obtain $(\bsu(R))_\BS\subseteq L.$
The proof for $(\du(R))$ is the same.
\end{proof}

For the finite dimensional $\mathbb{S}_{2}$-module $V$, we define its Czech dual $V^{\vee} = V^{\ast} \otimes sgn_{2}$. There is a natural pairing with respect to this duality given by:
$$
\langle,\rangle : \mathcal{T}(V^{\vee})(3) \otimes \mathcal{T}(V)(3) \longrightarrow \bfk,
$$
$$
\langle e_{i}^{\vee} \circ_{u} e_{j}^{\vee},e_{k} \circ_{v} e_{\ell} \rangle = \delta_{(i,k)}\delta_{(j,\ell)}\delta_{(u,v)} \in \bfk.
$$
We denote by $R^{\perp}$ the annihilator of $R$ with respect to this pairing. Given relations as in Eq.~(\ref{relation}), we can express a basis of $(R^{\perp})$ as
\begin{equation}
R^{\perp} = \left\{ g_{\ell} = \sum_{i,j}\alpha_{i,j}^{\ell} e_{i}^{\vee} \circ_{\rm I} e_{j}^{\vee} + \sum_{i,j}\beta_{i,j}^{\ell}e_{i}^{\vee} \circ_{\rm II} e_{j}^{\vee} +\sum_{i,j}\gamma_{i,j}^{\ell}e_{i}^{\vee} \circ_{\rm III} e_{j}^{\vee}~|~ 1\leq \ell \leq 3n^{2} - m \right\},
\mlabel{eq:prel}
\end{equation}
where, for all $k$ and $\ell$, we have
\begin{equation}
\sum_{i,j}a_{i,j}^{k} \alpha_{i,j}^{\ell} + \sum_{i,j}b_{i,j}^{k} \beta_{i,j}^{\ell}  + \sum_{i,j}c_{i,j}^{k}\gamma_{i,j}^{\ell} = 0.
\mlabel{perp}
\end{equation}
Further for any $(x_{i,j},y_{i,j},z_{i,j})\in \bfk^3, 1\leq i,j\leq n$, if
\begin{equation*}
\sum_{i,j}a_{i,j}^{k}x_{i,j}  + \sum_{i,j}b_{i,j}^{k} y_{i,j}  + \sum_{i,j}c_{i,j}^{k}z_{i,j} = 0 \ \text{ for all } 1\leq k\leq m,
\end{equation*}
then $\sum_{i,j} x_{i,j}e_{i}^{\vee} \circ_{\rm I} e_{j}^{\vee} + \sum_{i,j}y_{i,j}e_{i}^{\vee} \circ_{\rm II} e_{j}^{\vee} +\sum_{i,j}z_{i,j}e_{i}^{\vee} \circ_{\rm III} e_{j}^{\vee}$ is in $R^{\perp}$ and hence is of the form
$\sum\limits_{\ell=1}^{3n^2-m} d_\ell g_\ell$ for some $d_\ell\in \bfk$. Thus
\begin{equation}
(x_{i,j},y_{i,j},z_{i,j}) = \sum_{\ell=1}^{3n^2-m}d_{\ell}
\left(\alpha_{i,j}^{\ell},  \beta_{i,j}^{\ell}, \gamma_{i,j}^{\ell}\right).
\mlabel{eq:anninv}
\end{equation}

By Proposition~\ref{reppath}, we have
\begin{eqnarray}
&\du_x(e_i\circ_{\rm I} e_j)=\left\{ \svec{e_i}{\dashv}\circ_{\rm I} \svec{e_j}{\dashv}\right\},
\du_x(e_i\circ_{\rm II} e_j) =\left \{
\svec{e_i}{\vdash}\circ_{\rm II} \svec{e_j}{\rep}\right\},
\du_x(e_i\circ_{\rm III} e_j) = \left \{
\svec{e_i}{\dashv}\circ_{\rm III} \svec{e_j}{\vdash}\right\},& \notag\\
& \du_y(e_i\circ_{\rm I} e_j)=\left\{ \svec{e_i}{\dashv}\circ_{\rm I} \svec{e_j}{\vdash}\right\},
\du_y(e_i\circ_{\rm II} e_j) =\left \{
\svec{e_i}{\dashv}\circ_{\rm II} \svec{e_j}{\dashv}\right\},
\du_y(e_i\circ_{\rm III} e_j) = \left \{
\svec{e_i}{\vdash}\circ_{\rm III} \svec{e_j}{\rep} \right\},& \label{eq:dubase}\\
& \du_z(e_i\circ_{\rm I} e_j)=\left\{ \svec{e_i}{\vdash}\circ_{\rm I} \svec{e_j}{\rep}\right\},
\du_z(e_i\circ_{\rm II} e_j) =\left \{
\svec{e_i}{\dashv}\circ_{\rm II} \svec{e_j}{\vdash}\right\},
\du_z(e_i\circ_{\rm III} e_j) = \left \{
\svec{e_i}{\dashv}\circ_{\rm III} \svec{e_j}{\dashv} \right\},& \notag
\end{eqnarray}
where $\svec{e_{i}}{\vdash} \circ_{\rm u} \svec{e_{j}}{\rep} :=\left \{\svec{e_{i}}{\vdash} \circ_{\rm u} \svec{e_{j}}{\vdash},\svec{e_{i}}{\vdash} \circ_{\rm u} \svec{e_{j}}{\dashv}\right\}, u \in\{\rm I,II,III\}$.

Let $\bsu(\calp^!)$ be the bisuccessor of the dual operad $\calp^!$ recalled in Definition~\mref{de:bsu}. Then we also have
{\small
\begin{eqnarray}
&\bsu_x(e^{\vee}_i\circ_{\rm I} e^{\vee}_j)=\left\{ \svec{e^{\vee}_i}{\prec}\circ_{\rm I} \svec{e^{\vee}_j}{\prec}\right\},
\bsu_x(e^{\vee}_i\circ_{\rm II} e^{\vee}_j) =\left \{
\svec{e^{\vee}_i}{\succ}\circ_{\rm II} \svec{e^{\vee}_j}{\star}\right\},
\bsu_x(e^{\vee}_i\circ_{\rm III} e^{\vee}_j) = \left \{
\svec{e^{\vee}_i}{\prec}\circ_{\rm III} \svec{e^{\vee}_j}{\succ}\right\},& \notag\\
& \bsu_y(e^{\vee}_i\circ_{\rm I} e^{\vee}_j)=\left\{ \svec{e^{\vee}_i}{\prec}\circ_{\rm I} \svec{e^{\vee}_j}{\succ}\right\},
\bsu_y(e^{\vee}_i\circ_{\rm II} e^{\vee}_j) =\left \{
\svec{e^{\vee}_i}{\prec}\circ_{\rm II} \svec{e^{\vee}_j}{\prec}\right\},
\bsu_y(e^{\vee}_i\circ_{\rm III} e^{\vee}_j) = \left \{
\svec{e^{\vee}_i}{\succ}\circ_{\rm III} \svec{e^{\vee}_j}{\star} \right\},& \label{eq:bsubase}\\
& \bsu_z(e^{\vee}_i\circ_{\rm I} e^{\vee}_j)=\left\{ \svec{e^{\vee}_i}{\succ}\circ_{\rm I} \svec{e^{\vee}_j}{\star}\right\},
\bsu_z(e^{\vee}_i\circ_{\rm II} e^{\vee}_j) =\left \{
\svec{e^{\vee}_i}{\prec}\circ_{\rm II} \svec{e^{\vee}_j}{\succ}\right\},
\bsu_z(e^{\vee}_i\circ_{\rm III} e^{\vee}_j) = \left \{
\svec{e^{\vee}_i}{\prec}\circ_{\rm III} \svec{e^{\vee}_j}{\prec} \right\},& \notag
\end{eqnarray}
}
where $\star = \prec + \succ.$
\begin{theorem}
Let $\bfk$ be an infinite field. Let $\mathcal{P} = {\mathcal T}(V)/(R)$ be a binary quadratic operad. Then
$$
\du(\mathcal{P})^{!} =  \bsu(\mathcal{P}^{!})
$$
if and only if $R\neq 0$.
\mlabel{thm:dusu}
\end{theorem}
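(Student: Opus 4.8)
The plan is to compare the two binary quadratic operads through their spaces of arity-three relations, placed inside a single common ambient space, using the explicit formulas~\eqref{eq:dubase} and~\eqref{eq:bsubase} together with the genericity supplied by Proposition~\mref{pp:generic} and Lemma~\mref{lem:genbas}. First I would record that the $\BS_2$-action rules in Definition~\mref{rule} and Definition~\mref{de:bsu} make the Czech dual $\du(V)^\vee$ and $\widetilde{V^\vee}$ isomorphic as $\BS_2$-modules, and fix the isomorphism so that, under the pairing of the same shape for $\du(V)$ on $\mathcal{T}(\du(V)^\vee)(3)\otimes\mathcal{T}(\du(V))(3)$, the element $\svec{e_i^\vee}{\prec}$ (resp.\ $\svec{e_i^\vee}{\succ}$) pairs nontrivially exactly with $\svec{e_i}{\dashv}$ (resp.\ $\svec{e_i}{\vdash}$), up to the sign coming from $sgn_2$. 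Under this identification $\mathcal{T}(\du(V)^\vee)(3)=\mathcal{T}(\widetilde{V^\vee})(3)$ is a $12n^2$-dimensional space (with $n=\dim_\bfk V$), and the assertion $\du(\mathcal{P})^!=\bsu(\mathcal{P}^!)$ becomes the identity $(\du(R))^\perp=(\bsu(R^\perp))$ of $\BS$-submodules of this space.

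Suppose first that $R\neq 0$. By Proposition~\mref{pp:generic} I may choose the basis $\{e_1,\dots,e_n\}$ of $V$ so that every coordinate projection is onto on $(R)$; then Lemma~\mref{lem:genbas} guarantees that for every pair $(i,j)$ and every $u\in\{\mathrm{I,II,III}\}$ some basis relation $f_k$ has nonzero $e_i\circ_u e_j$-coefficient. Fix bases $f_1,\dots,f_m$ of $(R)$ and $g_1,\dots,g_{3n^2-m}$ of $(R^\perp)$ with coefficients as in~\eqref{relation} and~\eqref{eq:prel}, so that~\eqref{perp} and its converse~\eqref{eq:anninv} hold. I would write a general element $h\in\mathcal{T}(\du(V)^\vee)(3)$ in terms of its twelve ``decoration-pattern'' coefficient arrays --- four for each type $\circ_u$ --- and, using Lemma~\mref{lem:smod}, express membership $h\in(\du(R))^\perp$ as the vanishing of $\langle h,\du_x(f_k)\rangle$, $\langle h,\du_y(f_k)\rangle$ and $\langle h,\du_z(f_k)\rangle$ for all $k$ and all resolutions of the sets on the right. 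Reading off~\eqref{eq:dubase}, each $\rep$-slot contributes a term of the shape $c_{ij}(\text{array}_\prec-\text{array}_\succ)_{ij}$ that must vanish for every resolution, so by the genericity from Lemma~\mref{lem:genbas} three prescribed pairs of arrays are forced to coincide, cutting the space of admissible $h$ down to dimension $9n^2$. On that subspace the remaining conditions become three independent linear systems, each of the form $\sum_{ij}(a^k_{ij}x_{ij}+b^k_{ij}y_{ij}+c^k_{ij}z_{ij})=0$ for $1\le k\le m$, so by~\eqref{eq:anninv} the corresponding triple of arrays must equal $\sum_\ell d^{(s)}_\ell(\alpha^\ell_{ij},\beta^\ell_{ij},\gamma^\ell_{ij})$ for uniquely determined $d^{(s)}\in\bfk^{3n^2-m}$, $s=1,2,3$. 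This parametrizes $(\du(R))^\perp$ bijectively by triples $(d^{(1)},d^{(2)},d^{(3)})$.

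It then remains to match this with $(\bsu(R^\perp))$. By Lemma~\mref{lem:smod} the latter is spanned by $\bsu_x(g_\ell),\bsu_y(g_\ell),\bsu_z(g_\ell)$, and~\eqref{eq:bsubase} shows that the sum $\star=\prec+\succ$ occurs in $\bsu_\bullet$ precisely on the slots where $\du_\bullet$ carried a $\rep$; hence each $\bsu_\bullet(g_\ell)$ already satisfies the three array-equalities above, and its remaining arrays are exactly $\alpha^\ell,\beta^\ell,\gamma^\ell$, distributed in the pattern dictated by the parametrization. Thus $\bsu_x(g_\ell),\bsu_y(g_\ell),\bsu_z(g_\ell)$ correspond to $(e_\ell,0,0),(0,e_\ell,0),(0,0,e_\ell)$ and so form a basis of $(\du(R))^\perp$, giving $\du(\mathcal{P})^!=\bsu(\mathcal{P}^!)$; in passing this also yields $\dim(\du(R))=3n^2+3m$. (As a preliminary consistency check I would first verify the easy inclusion $(\bsu(R^\perp))\subseteq(\du(R))^\perp$ directly: the ``same-leaf'' pairings collapse to~\eqref{perp}, while the ``different-leaf'' pairings vanish by decoration mismatch.)

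For the converse, assume $R=0$ (and $V\neq 0$). Then $\du(\mathcal{P})=\mathcal{T}(\du(V))$ is free, so $(\du(\mathcal{P})^!)(3)=0$; but $\mathcal{P}^!=\mathcal{T}(V^\vee)/(\mathcal{T}(V^\vee)(3))$, and applying~\eqref{eq:bsubase} to the full relation space shows that the $\circ_{\mathrm I}$-component of $(\bsu(R^\perp))$ is the span of $\svec{e_i^\vee}{\prec}\circ_{\mathrm I}\svec{e_j^\vee}{\prec}$, $\svec{e_i^\vee}{\prec}\circ_{\mathrm I}\svec{e_j^\vee}{\succ}$ and $\svec{e_i^\vee}{\succ}\circ_{\mathrm I}(\svec{e_j^\vee}{\prec}+\svec{e_j^\vee}{\succ})$, which has dimension $3n^2$, strictly less than the ambient $4n^2$. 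Hence $(\bsu(\mathcal{P}^!))(3)\neq 0$ and the two operads differ, completing the ``only if'' direction. The step I expect to be the main obstacle is the bookkeeping in the second and third paragraphs: keeping track of the twelve coefficient arrays, pinning down exactly which pairs the $\rep$- and $\star$-slots force to agree, and confirming that Lemma~\mref{lem:genbas} is precisely what licenses those identifications at every index --- together with fixing the $sgn_2$ signs in the Czech pairing consistently.
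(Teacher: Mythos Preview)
Your proposal is correct and follows essentially the same approach as the paper: identify $\du(V)^\vee$ with $\widetilde{V^\vee}$, reduce the question to $(\du(R))^\perp=(\bsu(R^\perp))$, use Proposition~\mref{pp:generic} and Lemma~\mref{lem:genbas} to force the $\rep$-slot coefficient identifications, apply Eq.~\eqref{eq:anninv} to parametrize the remainder, and match with $\bsu_x(g_\ell),\bsu_y(g_\ell),\bsu_z(g_\ell)$ via Eq.~\eqref{eq:bsubase}; the ``only if'' direction is the same dimension count. The only differences are organizational: you package the argument in terms of twelve coefficient arrays and a parametrization by $(d^{(1)},d^{(2)},d^{(3)})$, whereas the paper carries out the identical computation element by element (e.g.\ exhibiting two explicit elements $F_1,F_2\in\du_x(f_k)$ whose difference isolates a single coefficient), and the paper proves the easy inclusion $(\bsu(R^\perp))\subseteq(\du(R))^\perp$ first rather than as a consistency check.
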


\begin{proof}
For the if part, let $\opd=\mathcal{T}(\gensp)/ (R)$ be a binary quadratic operad with $R\neq 0$. Take $W=(R)$ in Proposition~\mref{pp:generic} and fix a $\bfk$-basis $\{e_{1},e_{2},\cdots,e_{n}\}$ of $V$ as in the proposition.
Let $f_k, 1\leq k\leq m,$ be the basis of $(R)$ as defined in Eq.~(\mref{relation}).

By Eq.~(\ref{eq:dubase}), we have
\begin{eqnarray*}
\du_{x}(f_{k}) =\left\{\sum_{i,j}a_{i,j}^{k} \svec{e_{i}}{\dashv} \circ_{\rm I} \svec{e_{j}}{\dashv} + \sum_{i,j}b_{i,j}^{k}\svec{e_{i}}{\vdash} \circ_{\rm II} \svec{e_{j}}{\rep} +\sum_{i,j}c_{i,j}^{k}\svec{e_{i}}{\dashv} \circ_{\rm III} \svec{e_{j}}{\vdash}\right\},
\end{eqnarray*}
\begin{eqnarray*}
\du_{y}(f_{k})=\left\{\sum_{i,j}a_{i,j}^{k} \svec{e_{i}}{\dashv} \circ_{\rm I} \svec{e_{j}}{\vdash} + \sum_{i,j}b_{i,j}^{k}\svec{e_{i}}{\dashv} \circ_{\rm II} \svec{e_{j}}{\dashv} +\sum_{i,j}c_{i,j}^{k}\svec{e_{i}}{\vdash} \circ_{\rm III} \svec{e_{j}}{\rep}\right\},
\end{eqnarray*}
\begin{eqnarray*}
\du_{z}(f_{k})=\left\{\sum_{i,j}a_{i,j}^{k} \svec{e_{i}}{\vdash} \circ_{\rm I} \svec{e_{j}}{\rep} + \sum_{i,j}b_{i,j}^{k}\svec{e_{i}}{\dashv} \circ_{\rm II} \svec{e_{j}}{\vdash} +\sum_{i,j}c_{i,j}^{k}\svec{e_{i}}{\dashv} \circ_{\rm III} \svec{e_{j}}{\dashv}\right\}.
\end{eqnarray*}
From Eq.~(\ref{eq:bsubase}), we similarly obtain
$$
\bsu_{x}(g_{\ell}) = \left
\{\sum_{i,j}\alpha_{i,j}^{\ell} \svec{e^{\vee}_{i}}{\prec} \circ_{\rm I} \svec{e^{\vee}_{j}}{\prec} + \sum_{i,j}\beta_{i,j}^{\ell}\svec{e^{\vee}_{i}}{\succ} \circ_{\rm II} \svec{e^{\vee}_{j}}{\star} +\sum_{i,j}\gamma_{i,j}^{\ell}\svec{e^{\vee}_{i}}{\prec} \circ_{\rm III} \svec{e^{\vee}_{j}}{\succ}\right\},
$$

$$
\bsu_{y}(g_{\ell}) = \left\{\sum_{i,j}\alpha_{i,j}^{\ell} \svec{e^{\vee}_{i}}{\prec} \circ_{\rm I} \svec{e^{\vee}_{j}}{\succ} + \sum_{i,j}\beta_{i,j}^{\ell}\svec{e^{\vee}_{i}}{\prec} \circ_{\rm II} \svec{e^{\vee}_{j}}{\prec} +\sum_{i,j}\gamma_{i,j}^{\ell}\svec{e^{\vee}_{i}}{\succ} \circ_{\rm III} \svec{e^{\vee}_{j}}{\star}\right\},
$$

$$
\bsu_{z}(g_{\ell}) = \left\{\sum_{i,j}\alpha_{i,j}^{\ell} \svec{e^{\vee}_{i}}{\succ} \circ_{\rm I} \svec{e^{\vee}_{j}}{\star} + \sum_{i,j}\beta_{i,j}^{\ell}\svec{e^{\vee}_{i}}{\prec} \circ_{\rm II} \svec{e^{\vee}_{j}}{\succ} +\sum_{i,j}\gamma_{i,j}^{\ell}\svec{e^{\vee}_{i}}{\prec} \circ_{\rm III} \svec{e^{\vee}_{j}}{\prec}\right\}.
$$
By Lemma~\mref{lem:smod}, we have
$$
(\du(R)) = \sum_{k=1}^m\bfk\du(f_{k}) = \sum_{k}\left(\bfk\du_{x}(f_{k})+ \bfk\du_{y}(f_{k}) + \bfk\du_{z}(f_{k})\right),
$$
$$
\bsu(R^{\perp}) = \sum_{\ell=1}^{3n^2-m}\bfk \bsu(g_{\ell}) = \sum_{\ell=1}^{3n^2-m}\left(\bfk(\bsu_{x}(g_{\ell}) + \bfk\bsu_{y}(g_{\ell}) + \bfk\bsu_{z}(g_{\ell})\right).
$$

To reach our conclusion, it suffices to show the equality $(\du(R)^{\perp}) = (\bsu(R^{\perp}))$ of $\BS$-modules under the condition $R\neq 0$. For all $1\leq k\leq m$ and $1\leq \ell\leq 3n^2-m$, by Eq.~(\ref{perp}), we have
$$
\langle \bsu_{p}(g_{\ell}), \du_{q}(f_{k})\rangle = 0,~\mbox{where}~p,q \in \{x,y,z\} .
$$
Thus $\langle \bsu(g_{\ell}), \du(f_{k})\rangle = 0$ and hence $ \bsu(R^{\perp}) \subset \du(R)^{\perp}$, implying that $(\bsu(R^\perp))\subseteq (\du(R)^\perp)$. On the other hand, if
$$
h = \sum_{i,j,u,v}x_{i,j,u,v} \svec{e_{i}^{\vee}}{u} \circ_{\rm I} \svec{e_{j}^{\vee}}{v} + \sum_{i,j,u,v}y_{i,j,u,v}\svec{e_{i}^{\vee}}{u} \circ_{{\rm II}} \svec{e_{j}^{\vee}}{v} + \sum_{i,j,u,v}z_{i,j,u,v}\svec{e_{i}^{\vee}}{u} \circ_{{\rm III}} \svec{e_{j}^{\vee}}{v}
$$
is in $\du(R)^{\perp}$, where $u,v \in \{\prec,\succ\}$. Then for all $1\leq k\leq m$, we have
$$
\langle h, \du_{x}(f_{k})\rangle = 0, \langle h, \du_{y}(f_{k})\rangle = 0, \langle h, \du_{z}(f_{k})\rangle = 0.
$$

Since $R\neq 0$, by Proposition~\mref{pp:generic}, for any fixed $i_{0}, j_{0} \in \{1,2,\cdots,n\}$, there exists $1\leq k_{0}\leq m$, such that $b_{i_{0},j_{0}}^{k_{0}} \neq 0$. Then, for any $k$, by the definition of $\du_x$ we see that the relations
\begin{eqnarray*}
F_1:=\sum_{i,j}a_{i,j}^{k} \svec{e_{i}}{\dashv} \circ_{\rm I} \svec{e_{j}}{\dashv} + b_{i_{0},j_{0}}^{k} \svec{e_{i}}{\vdash} \circ_{\rm II} \svec{e_{j}}{\dashv} + \sum_{i\neq i_{0},j\neq j_{0}}b_{i,j}^{k}\svec{e_{i}}{\vdash} \circ_{\rm II} \svec{e_{j}}{\dashv} +\sum_{i,j}c_{i,j}^{k}\svec{e_{i}}{\dashv} \circ_{\rm III} \svec{e_{j}}{\vdash},\\
F_2:=\sum_{i,j}a_{i,j}^{k} \svec{e_{i}}{\dashv} \circ_{\rm I} \svec{e_{j}}{\dashv} + b_{i_{0},j_{0}}^{k} \svec{e_{i}}{\vdash} \circ_{\rm II} \svec{e_{j}}{\vdash} + \sum_{i\neq i_{0},j\neq j_{0}}b_{i,j}^{k}\svec{e_{i}}{\vdash} \circ_{\rm II} \svec{e_{j}}{\dashv} +\sum_{i,j}c_{i,j}^{k}\svec{e_{i}}{\dashv} \circ_{\rm III} \svec{e_{j}}{\vdash},\\
\end{eqnarray*}
are in $\du_{x}(f_{k})$.
Thus, for $1\leq k\leq m$, we obtain
\begin{eqnarray*}
\sum_{i,j}a_{i,j}^{k}x_{i,j,\prec,\prec} + b_{i_{0},j_{0}}^{k}y_{i_{0},j_{0},\succ,\prec} + \sum_{i\neq i_{0},j\neq j_{0}}b_{i,j}^{k}y_{i,j,\succ,\prec} +\sum_{i,j}c_{i,j}^{k}z_{i,j,\prec,\succ}=\langle h,F_1\rangle =0,\\
\sum_{i,j}a_{i,j}^{k}x_{i,j,\prec,\prec} + b_{i_{0},j_{0}}^{k}y_{i_{0},j_{0},\succ,\succ} +\sum_{i\neq i_{0},j\neq j_{0}}b_{i,j}^{k}y_{i,j,\succ,\prec} +\sum_{i,j}c_{i,j}^{k}z_{i,j,\prec,\succ}=\langle h,F_2\rangle =0.
\end{eqnarray*}
Comparing the two equations and applying $b_{i_0,j_0}^{k_0}\neq 0$, we obtain $y_{i_{0},j_{0},\succ,\succ} = y_{i_{0},j_{0},\succ,\prec}$ for all $1\leq i_0, j_0\leq n$.
From the second equation and Eq.~(\ref{eq:anninv}), we also have
$$
(x_{i,j,\prec,\prec}, y_{i,j,\succ,\prec},z_{i,j,\prec,\succ}) =\sum_{\ell=1}^{3n^2-m}d_\ell (\alpha_{i,j}^\ell, \beta_{i,j}^\ell, \gamma_{i,j}^\ell),
$$
for some $d_\ell\in \bfk$.
Thus we obtain
\begin{eqnarray*}
h_x&:=&\sum_{i,j}x_{i,j,\prec,\prec} \svec{e_{i}^{\vee}}{\prec} \circ_{\rm I} \svec{e_{j}^{\vee}}{\prec} + \sum_{i,j}y_{i,j,\succ,\prec}\svec{e_{i}^{\vee}}{\succ} \circ_{{\rm II}} \svec{e_{j}^{\vee}}{v} + \sum_{i,j}y_{i,j,\succ,\succ}\svec{e_{i}^{\vee}}{\succ} \circ_{{\rm II}} \svec{e_{j}^{\vee}}{v} +
\sum_{i,j}z_{i,j}\svec{e_{i}^{\vee}}{\prec} \circ_{{\rm III}} \svec{e_{j}^{\vee}}{\succ}\\
&=&\sum_{i,j}x_{i,j,\prec,\prec} \svec{e_{i}^{\vee}}{\prec} \circ_{\rm I} \svec{e_{j}^{\vee}}{\prec} + \sum_{i,j}y_{i,j,\succ,\prec}\left(\svec{e_{i}^{\vee}}{\succ} \circ_{{\rm II}} \svec{e_{j}^{\vee}}{v} +\svec{e_{i}^{\vee}}{\succ} \circ_{{\rm II}} \svec{e_{j}^{\vee}}{v}\right) +
\sum_{i,j}z_{i,j}\svec{e_{i}^{\vee}}{\prec} \circ_{{\rm III}} \svec{e_{j}^{\vee}}{\succ}\\
&=&\sum_{\ell=1}^{3n^2-m} d_\ell \left(\sum_{i,j}\alpha_{i,j}^{\ell} \svec{e_{i}^{\vee}}{\prec} \circ_{\rm I} \svec{e_{j}^{\vee}}{\prec}
 +\sum_{i,j}\beta_{i,j}^{\ell}\left(\svec{e_{i}^{\vee}}{\succ} \circ_{\rm II} \svec{e_{j}^{\vee}}{\prec}
+\svec{e_{i}^{\vee}}{\succ} \circ_{\rm II} \svec{e_{j}^{\vee}}{\succ}\right) +\sum_{i,j}\gamma_{i,j}^{\ell}\svec{e_{i}^{\vee}}{\prec} \circ_{\rm III} \svec{e_{j}^{\vee}}{\succ}\right).
\end{eqnarray*}
This is in $\sum\limits_{\ell=1}^{3n^2-m} \bfk \bsu_x(g_\ell).$
By the same argument, we find that
$$h_y:=\sum_{i,j}x_{i,j,\prec,\succ} \svec{e_{i}^{\vee}}{\prec} \circ_{\rm I} \svec{e_{j}^{\vee}}{\succ} + \sum_{i,j}y_{i,j,\prec,\prec}\svec{e_{i}^{\vee}}{\prec} \circ_{{\rm II}} \svec{e_{j}^{\vee}}{\prec} + \sum_{i,j,v}z_{i,j,\succ,\prec}\svec{e_{i}^{\vee}}{\succ} \circ_{{\rm III}} \svec{e_{j}^{\vee}}{\prec}  + \sum_{i,j,\succ,\succ} z_{i,j,\succ,\succ}\svec{e_{i}^{\vee}}{\succ} \circ_{{\rm III}} \svec{e_{j}^{\vee}}{\succ}$$
is in $\sum\limits_{\ell=1}^{3n^2-m} \bfk \bsu_y(g_\ell)$ and
$$h_z:=\sum_{i,j,\succ,\prec}x_{i,j,\succ,v} \svec{e_{i}^{\vee}}{\succ} \circ_{\rm I} \svec{e_{j}^{\vee}}{\prec} +
\sum_{i,j,\succ,\succ}x_{i,j,\succ,\succ} \svec{e_{i}^{\vee}}{\succ} \circ_{\rm I} \svec{e_{j}^{\vee}}{\succ} + \sum_{i,j}y_{i,j,\prec,\succ}\svec{e_{i}^{\vee}}{\prec} \circ_{{\rm II}} \svec{e_{j}^{\vee}}{\succ} + \sum_{i,j}z_{i,j,\prec,\prec}\svec{e_{i}^{\vee}}{\prec} \circ_{{\rm III}} \svec{e_{j}^{\vee}}{\prec}$$
is in $\sum\limits_{\ell=1}^{3n^2-m} \bfk \bsu_z(g_\ell)$.
Note that $h=h_x+h_y+h_z$. Thus in summary, we find that
$h$ is in
$$\sum_{\ell}\bfk \bsu_{x}(g_{\ell})+\sum_{\ell}\bfk \bsu_{y}(g_{\ell})+\sum_{\ell} \bfk \bsu_{z}(g_{\ell})
$$
and hence is in the $\BS$-module generated by $\bsu(R^{\perp})$.
Thus we have the equality $(\du(R)^{\perp}) = (\bsu(R^{\perp}))$ of $\BS$-modules. Therefore $$\du(\mathcal{P})^{!} =  \bsu(\mathcal{P}^{!}) ~\mbox{and}~ \du(\mathcal{P}) = \bsu(\mathcal{P}^{!})^{!}.$$

To prove the ``only if" part, suppose $R=0$. Then we have $\du(R)=0\subseteq \mathcal{T}(\du(V))$ and hence $\du(R)^\perp=\mathcal{T}(\bsu(V^\vee))(3)$. On the other hand, $R^\perp=\mathcal{T}(V^\vee)(3)$ which has a basis $e_i^\vee\circ_u e_j^\vee, 1\leq i, j\leq n, u\in \{\mathrm{I,II,III}\}$. Then a linear spanning set of $\bsu(\mathcal{T}(V^\vee)(3))$ is given by $\bsu_v(e_i^\vee\circ_u e_j^\vee), 1\leq i, j\leq n, v\in \{x, y, z\}, u\in \{\mathrm{I,II,III}\}$ in  Eq.~(\mref{eq:bsubase}). Thus the dimension of $\bsu(\mathcal{T}(V^\vee)(3))$ is at most $9n^2$, while the dimension of $\mathcal{T}(\bsu(V^\vee))(3)$ is
$$3 \dim(\bsu(V^\vee)^{\ot 2})=3 (2n)^2= 12 n^2.$$
Hence $\bsu(\mathcal{T}(V^\vee)(3))$ is a proper subspace of $\mathcal{T}(\bsu(V^\vee))(3)$ and thus
$\du(\mathcal{P})^{!} \neq  \bsu(\mathcal{P}^{!})$.
\end{proof}

\begin{theorem}\mlabel{thm:tdutsu}
Let $\bfk$ be an infinite field. Let $\mathcal{P} = {\mathcal T}(V)/(R)$ be a binary quadratic operad. Then
$$
\tdu(\mathcal{P})^{!} =  \tsu(\mathcal{P}^{!})
$$
if and only if $R\neq 0$.
\end{theorem}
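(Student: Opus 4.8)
The plan is to follow closely the proof of Theorem~\mref{thm:dusu}, replacing the duplicator $\du$ by the triplicator $\tdu$ and the bisuccessor $\bsu$ by the trisuccessor $\tsu$ throughout. The two new features are the presence of an extra flavor ($\perp$, resp. the third flavor of $\tsu$ in the notation of~\mcite{BBGN}), and the fact that a relation in arity three is now indexed by the $7$ nonempty subsets $J$ of its three leaves rather than by the $3$ single leaves. The first step is therefore to record the arity-three formulas that generalize Eq.~(\mref{eq:dubase}): by Proposition~\mref{treppath}, for each nonempty $J$ and each $u\in\{\rmi,\rmii,\rmiii\}$ one writes out $\tdu_J(e_i\circ_u e_j)$ explicitly. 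Three cases arise according to $|J|$. If $|J|=1$ one recovers the duplicator formulas verbatim, so exactly one slot carries $\svec{\gop}{\rep}$, namely the lower vertex of the $\circ_u$-tree whose subtree contains no leaf of $J$. If $|J|=2$ both vertices lie on a path to some leaf of $J$, so only the determinate flavors $\dashv,\vdash,\perp$ occur and no $\svec{\gop}{\rep}$ appears. If $J$ is the full leaf set one gets $\svec{e_i}{\perp}\circ_u\svec{e_j}{\perp}$ up to the cyclic twist. The corresponding formulas for $\tsu_J(e_i^\vee\circ_u e_j^\vee)$ follow from the definition of the trisuccessor in~\mcite{BBGN}, with the three flavors of $\tsu$ and their sum $\svec{\gop}{\star}$ playing the roles of $\dashv,\vdash,\perp$ and of $\svec{\gop}{\rep}$; note that $\svec{\gop}{\star}$ pairs to $1$ with each of $\dashv,\vdash,\perp$. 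By the triplicator, resp. trisuccessor, analogue of Lemma~\mref{lem:smod} --- proved just as there from the corresponding analogue of \cite[Lemma~2.6]{BBGN} --- $(\tdu(R))$ is linearly spanned by the $\tdu_J(f_k)$ and $(\tsu(R^\perp))$ by the $\tsu_J(g_\ell)$, where $f_k$ and $g_\ell$ are the bases of $(R)$ and $(R^\perp)$ fixed as in Eqs.~(\mref{relation}) and~(\mref{eq:prel}) relative to the generic basis of $V$ supplied by Proposition~\mref{pp:generic} applied to $W=(R)$, which is legitimate since $R\neq 0$.

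For the inclusion $(\tsu(R^\perp))\subseteq(\tdu(R)^\perp)$ I would prove $\langle\tsu_{J'}(g_\ell),\tdu_J(f_k)\rangle=0$ for all nonempty $J,J'$. The point is that $\tdu_J$ and $\tsu_J$ assign labels by the \emph{same} rule, viewed as a function of the paths to $J$: turn-left gives $\dashv$, resp. $\prec$; turn-right gives $\vdash$, resp. $\succ$; mixed gives $\perp$, resp. the third flavor; and a vertex off all paths gives $\svec{\gop}{\rep}$, resp. $\svec{\gop}{\star}$. Hence for $J=J'$ the pairing collapses vertex by vertex --- a factor $\svec{\gop}{\star}$ contributing $1$ against any of $\dashv,\vdash,\perp$ --- to $\sum_{i,j}(\alpha^\ell_{ij}a^k_{ij}+\beta^\ell_{ij}b^k_{ij}+\gamma^\ell_{ij}c^k_{ij})$, which vanishes by Eq.~(\mref{perp}). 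For $J\neq J'$ a short case analysis over the two vertices of each $\circ_u$-tree shows that there is always a vertex at which both $\tsu_{J'}$ and $\tdu_J$ carry determinate flavors (neither $\svec{\gop}{\star}$ nor $\svec{\gop}{\rep}$) that do not correspond to one another, so the pairing is $0$ for every choice of representatives. This gives $(\tsu(R^\perp))\subseteq(\tdu(R)^\perp)$.

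The converse inclusion is the main obstacle, as in Theorem~\mref{thm:dusu}; only the bookkeeping grows. Take $h\in\tdu(R)^\perp$ and expand it in the basis $\svec{e_i^\vee}{a}\circ_u\svec{e_j^\vee}{b}$ of $\mathcal{T}(\tsu(V^\vee))(3)$. Pairing $h$ against the elements of the \emph{sets} $\tdu_{\{x\}}(f_k),\tdu_{\{y\}}(f_k),\tdu_{\{z\}}(f_k)$ --- which are genuine sets of size $\geq 2$ because of the $\svec{\gop}{\rep}$-slot --- and, for each fixed $(i_0,j_0)$, using Proposition~\mref{pp:generic} to produce a relation whose coefficient at the basis vector over that $\svec{\gop}{\rep}$-slot is nonzero, one forces the three coordinates of $h$ differing only in the flavor of the $\svec{\gop}{\rep}$-slot to coincide; this is the three-flavor version of the identity $y_{i_0,j_0,\succ,\succ}=y_{i_0,j_0,\succ,\prec}$ in the proof of Theorem~\mref{thm:dusu}. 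Pairing $h$ next against the $\tdu_J(f_k)$ with $|J|\geq 2$, which carry no $\svec{\gop}{\rep}$ and hence give clean linear equations, and invoking the solvability statement around Eq.~(\mref{eq:anninv}) applied to $R^\perp$, one writes the relevant coordinate tuples of $h$ as combinations of the $(\alpha^\ell,\beta^\ell,\gamma^\ell)$. Collecting the pieces indexed by the $7$ subsets $J$ gives a decomposition $h=\sum_J h_J$ with each $h_J$ in $\sum_\ell\bfk\,\tsu_J(g_\ell)$, so $h\in(\tsu(R^\perp))$. Therefore $(\tdu(R)^\perp)=(\tsu(R^\perp))$; since moreover $\tdu(V)^\vee\cong\tsu(V^\vee)$ as $\BS_2$-modules, this proves $\tdu(\opd)^!=\tsu(\opd^!)$.

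For the ``only if'' direction, suppose $R=0$. Then $\tdu(R)=0$, so the relation space $\tdu(R)^\perp$ of $\tdu(\opd)^!$ is all of $\mathcal{T}(\tsu(V^\vee))(3)$, of dimension $3(3n)^2=27n^2$. On the other hand the relation space $(\tsu(R^\perp))$ of $\tsu(\opd^!)$ is spanned by the elements $\tsu_J(e_i^\vee\circ_u e_j^\vee)$ as $e_i^\vee\circ_u e_j^\vee$ runs over the $3n^2$ basis vectors and $J$ over the $7$ nonempty subsets of the leaves, hence has dimension at most $21n^2$. Since $21n^2<27n^2$ the two relation spaces differ and $\tdu(\opd)^!\neq\tsu(\opd^!)$. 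The step I expect to be delicate is the converse inclusion of the third paragraph: keeping track of which coordinates of $h$ are tied together by the $\svec{\gop}{\rep}$-slots across all seven subsets $J$, and then reassembling $h$ from the $\tsu_J(g_\ell)$, is appreciably more intricate than in the two-flavor setting of Theorem~\mref{thm:dusu}, even if it requires no genuinely new idea.
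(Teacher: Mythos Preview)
Your proposal is correct and follows exactly the approach the paper intends: the paper's own proof of Theorem~\mref{thm:tdutsu} consists of the single sentence ``The proof is similar to Theorem~\mref{thm:dusu},'' and what you have written is a faithful and accurate expansion of that similarity, including the correct dimension count $21n^2<27n^2$ for the ``only if'' direction.
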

\begin{proof}
The proof is similar to Theorem \mref{thm:dusu}.
\end{proof}

Taking $\mathcal{P}$ to be the operad of associative algebra in Theorem~\mref{thm:tdutsu}, we get the result of Loday and Ronco~\cite[theorem 3.1]{LR} that the triassociative algebra and the tridendriform algebra are in Koszul dual to each other.

More generally, Theorem~\mref{thm:dusu} and Theorem~\mref{thm:tdutsu} make it straightforward to compute the generating and relation spaces of the Koszul duals of the operads of some existing algebras. We give the following examples as illustrations.

\begin{enumerate}
\item
The operad $DualCTD$~\cite{Zi} is defined to be the Koszul dual of the operad $CTD$ of the commutative tridendriform algebra. Since the latter operad is $\tsu(\comm)$~\cite{BBGN}, we have
  $$DualCTD =\tsu(\comm)^!=\tdu(\comm^!)=\tdu(\lie),$$
which is precise is $\TriLeib$, the operad of the triLeibniz algebra in Proposition~\mref{prop:triLeib}. Thus we easily obtain the relations of $DualCTD$. See Eq.~(\ref{eq:trileib})
\item
The operad of the commutative quadri-algebra is the Kozul dual of $\bsu(\zinb)$ and hence is $\du(\leib)$. Thus its relations can be easily computed.
\item
The Kozul dual of $\bsu(\Prelie)$, the operad of the L-dendriform algebra, is $\du(\Perm)$ and hence can be easily computed.
\item
The operad $L$-$quad$~\cite{BLN} of the L-quadri-algebra is shown to be $\bsu(L\text{-}dend)=\bsu(\bsu(Lie))$ in~\cite{BBGN}. Thus the dual of $L\text{-}quad$ is $\du(\du(\Perm))$ and can be easily computed.
\end{enumerate}

\subsection{Replicators and Manin white products}

As a preparation for later discussions, we recall concepts and notations on Manin white product, most following~\mcite{Va}.

Ginzburg and Kapranov defined in \mcite{GK} a morphism of operads $\Phi: \mathcal{T}(V \otimes W) \rightarrowtail \mathcal{T}(V) \otimes \mathcal{T}(W)$. Let $\calp=\mathcal{T}(V)/(R)$ and $\calq=\mathcal{T}(W)/(S)$ be two binary quadratic operads with finite-dimensional generating spaces. Consider the composition of morphisms of operads
$$
\xymatrix{
\mathcal{T}(V \otimes W) \ar[r]^{\Phi}& \mathcal{T}(V) \otimes \mathcal{T}(W) \ar[rr]^{\pi_{\calp} \otimes \pi_{\calq}}&&\calp \otimes \calq,
}
$$
where $\pi_{\calp}: {\mathcal T}(V)\to {\mathcal P}$ and $\pi_{\calq}: {\mathcal T}(W)\to {\mathcal Q}$ are the natural projections.
Its kernel is $(\Phi^{-1}(R \otimes \mathcal{T}(W) + \mathcal{T}(V) \otimes S))$, the ideal generated by $\Phi^{-1}(R \otimes \mathcal{T}(W) + \mathcal{T}(V) \otimes S)$.

\begin{defn}(\mcite{GK,Va})
{\rm Let $\calp=\mathcal{T}(V)/(R)$ and $\calq=\mathcal{T}(W)/(S)$ be two binary quadratic operads with finite-dimensional generating spaces. The {\bf Manin white product} of $\mathcal P$ and $\mathcal Q$ is defined by
$$\calp \bigcirc \calq:=\mathcal{T}(V \otimes W )/(\Phi^{-1}(R \otimes \mathcal{T}(W) + \mathcal{T}(V) \otimes S)).$$}
\label{de:white}
\end{defn}

In general, the white Manin product difficult to compute  when the operads are given in terms of generators and relations.
Theorem~\mref{thm:whiteproduct} provides a convenient way to compute the white Manin product of a binary quadratic operad with the operad $\Perm$ or ${\it ComTrias}$ by relating them to the duplicator and triplicator.

\begin{theorem}\mlabel{thm:whiteproduct}
Let $\mathcal{P}=\mathcal{T}(V)/(R)$ be a binary quadratic operad with $R\neq 0$. We have the isomorphism of operads $$\du(\mathcal{P})\cong \Perm \bigcirc \mathcal{P}, \quad
\tdu(\mathcal{P})\cong {\it ComTrias} \bigcirc \mathcal{P}.
$$
\end{theorem}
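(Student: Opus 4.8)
The plan is to deduce both isomorphisms from the Koszul-duality theorems just established, rather than comparing relation spaces directly through Ginzburg--Kapranov's morphism $\Phi$ on arity three. Three inputs are used. First, for binary quadratic operads with finite-dimensional generating spaces the Manin white and black products are interchanged by Koszul duality, $(\mathcal{A}\bullet\mathcal{B})^{!}\cong\mathcal{A}^{!}\bigcirc\mathcal{B}^{!}$, together with reflexivity $(\mathcal{A}^{!})^{!}\cong\mathcal{A}$ \cite{GK,Va}. Second, the description from \cite{BBGN} of the successors as Manin black products: $\bsu(\mathcal{Q})\cong\Prelie\bullet\mathcal{Q}$ and $\tsu(\mathcal{Q})\cong\postlie\bullet\mathcal{Q}$ for every binary quadratic operad $\mathcal{Q}$ with nonzero relation space; equivalently, since $\lie$ is the unit for the Manin black product, $\bsu(\mathcal{Q})\cong\bsu(\lie)\bullet\mathcal{Q}$ and $\tsu(\mathcal{Q})\cong\tsu(\lie)\bullet\mathcal{Q}$. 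Third, Theorems~\ref{thm:dusu} and~\ref{thm:tdutsu}, whose hypothesis is precisely $R\neq0$.

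I would begin by recording the two auxiliary identities $\bsu(\lie)^{!}\cong\Perm$ and $\tsu(\lie)^{!}\cong\comtrias$. These follow from $\Perm\cong\du(\comm)$ (Proposition~\ref{pp:perm}) and the analogous fact $\comtrias\cong\tdu(\comm)$ by applying Theorems~\ref{thm:dusu} and~\ref{thm:tdutsu} to $\calp=\comm$, whose space of relations is nonzero: indeed $\Perm^{!}\cong\du(\comm)^{!}\cong\bsu(\comm^{!})\cong\bsu(\lie)$ and $\comtrias^{!}\cong\tdu(\comm)^{!}\cong\tsu(\comm^{!})\cong\tsu(\lie)$, using $\comm^{!}\cong\lie$, and one then passes to Koszul duals. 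The core of the proof is then a short chain of isomorphisms. Since $R\neq0$, Theorem~\ref{thm:dusu} gives $\du(\calp)^{!}\cong\bsu(\calp^{!})$, whence, using the second and first inputs and reflexivity,
\[
\du(\calp)\ \cong\ \bigl(\bsu(\calp^{!})\bigr)^{!}\ \cong\ \bigl(\bsu(\lie)\bullet\calp^{!}\bigr)^{!}\ \cong\ \bsu(\lie)^{!}\bigcirc(\calp^{!})^{!}\ \cong\ \Perm\bigcirc\calp.
\]
The same argument with $\du,\bsu,\Perm$ replaced by $\tdu,\tsu,\comtrias$ and Theorem~\ref{thm:tdutsu} in place of Theorem~\ref{thm:dusu} yields $\tdu(\calp)\cong\comtrias\bigcirc\calp$.

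Two points require care. Applying the second input to $\calp^{!}$ presupposes that the relation space $R^{\perp}$ of $\calp^{!}$ is nonzero; this holds whenever $R$ is a proper $\BS$-submodule of $\mathcal{T}(V)(3)$, so only the degenerate case $R=\mathcal{T}(V)(3)$ remains, and there $\calp$, $\du(\calp)$ and $\Perm\bigcirc\calp$ all vanish in arities $\geq 3$, reducing the claim to the evident isomorphism $\du(V)\cong P\otimes V$ of generating $\BS$-modules (with $P$ the generating $\BS$-module of $\Perm$), which I would check by hand. The finite-dimensionality needed for the Manin products is automatic, since $V$, and hence $\du(V)$ and $\tdu(V)$, are finite-dimensional. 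I expect the only real obstacle to be organizational: verifying that the isomorphisms supplied by the three inputs are mutually compatible so that the displayed chain is legitimate, and---should a self-contained account be wanted---reproving \cite{BBGN}'s black-product description of the successors together with the Koszul-duality compatibility of the Manin products. Given Theorems~\ref{thm:dusu} and~\ref{thm:tdutsu}, no genuinely new computation arises; trivializing this comparison with the Manin white product is precisely what those duality theorems are for.
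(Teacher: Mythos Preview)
Your approach is essentially identical to the paper's: both deduce the result from Theorems~\ref{thm:dusu} and~\ref{thm:tdutsu} together with the identification $\bsu(\mathcal{Q})\cong\Prelie\bullet\mathcal{Q}$ (equivalently $\bsu(\lie)\bullet\mathcal{Q}$) and $\tsu(\mathcal{Q})\cong\postlie\bullet\mathcal{Q}$ from \cite{BBGN}, the Koszul duality between black and white Manin products, and $\Prelie^{!}\cong\Perm$, $\postlie^{!}\cong\comtrias$. The paper's proof is exactly your displayed chain, only citing $\Prelie^{!}\cong\Perm$ directly rather than rederiving it as $\bsu(\lie)^{!}\cong\du(\comm)\cong\Perm$; your additional care about the degenerate case $R=\mathcal{T}(V)(3)$ (where $R^{\perp}=0$ and the \cite{BBGN} hypothesis on $\calp^{!}$ could fail) is a refinement the paper omits.
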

\begin{proof}
By~\mcite{BBGN}, we have the isomorphisms of operads
$$\bsu(\mathcal{P}^{!}) \cong \Prelie \bullet  \mathcal{P}^{!}, \quad \tsu(\mathcal{P}^{!})\cong \postlie \bullet \mathcal{P}^{!}.$$
Since $\Prelie^{!} \cong \Perm$, $\postlie^{!} = {\it ComTrias}$ and $(\mathcal{P} \bullet \mathcal{Q})^{!} \cong \mathcal{P}^{!} \bigcirc \mathcal{Q}^{!}$, we obtain
$$
\du(\mathcal{P}) \cong (\bsu(P^{!}))^{!} \cong (\Prelie \bullet \mathcal{P}^{!})^{!} \cong \Perm \bigcirc \mathcal{P}.
$$
Similarly
$\tdu(\mathcal{P})\cong {\it ComTrias} \bigcirc \mathcal{P}.$
\end{proof}

By taking replicators of suitable operads $\calp$, we immediately get
\begin{coro}
\begin{enumerate}
\item
(\mcite{Va})\;$\Perm \bigcirc \lie = \leib$ and $\Perm \bigcirc \ass = \diass$.
\mlabel{it:leib}
\item
(\mcite{Uc})\; $\Perm \bigcirc {\it Pois}= {\it DualPrePois}$.
\mlabel{it:dualprepois}
\item
${\it ComTriass} \bigcirc \ass={\it Triass}$.\mlabel{it:Ass}
\end{enumerate}
\end{coro}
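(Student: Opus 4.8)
The plan is to read off all three identities from Theorem~\mref{thm:whiteproduct}, which already identifies $\Perm\bigcirc\calp$ with $\du(\calp)$ and $\comtrias\bigcirc\calp$ with $\tdu(\calp)$ for any binary quadratic operad $\calp$ with $R\neq 0$. For each $\calp$ occurring here I would first note that $\calp$ is binary quadratic and has a nonzero space of relations, so that the hypothesis of Theorem~\mref{thm:whiteproduct} is met and the degenerate free-operad case is excluded; after that the only remaining task is to substitute the value of the replicator computed earlier in the paper.

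For part~(\mref{it:leib}), the operads $\lie$ and $\ass$ are binary quadratic with visibly nonzero relations (Jacobi, respectively associativity), so Theorem~\mref{thm:whiteproduct} yields $\Perm\bigcirc\lie\cong\du(\lie)$ and $\Perm\bigcirc\ass\cong\du(\ass)$; Proposition~\mref{pp:leib} gives $\du(\lie)=\leib$, and the computation of $\du(\ass)$ in Section~\mref{repnsopd} gives $\du(\ass)=\diass$. For part~(\mref{it:dualprepois}), the operad $\mathit{Pois}$ is binary quadratic with nontrivial relations (Jacobi, commutative associativity, and the Leibniz-rule compatibility of the two operations), so $\Perm\bigcirc\mathit{Pois}\cong\du(\mathit{Pois})$, and Proposition~\mref{pp:prepois} identifies this with $\mathit{DualPrePois}$. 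For part~(\mref{it:Ass}) I would instead use the triplicator half of Theorem~\mref{thm:whiteproduct}: since $\ass$ is binary quadratic with $R\neq 0$, $\comtrias\bigcirc\ass\cong\tdu(\ass)$, and the computation of the triplicator of $\ass$ in Section~\mref{repnsopd} gives $\tdu(\ass)=\mathit{Triass}$.

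Since each case merely combines Theorem~\mref{thm:whiteproduct} with a replicator already determined in Sections~\mref{repnsopd} and~\mref{ss:exam}, there is no real obstacle; the only mild point is to read the nonsymmetric computations of $\du(\ass)$ and $\tdu(\ass)$ as computations for the corresponding symmetric binary operad, and to check in each instance that the relation space is nonzero so that Theorem~\mref{thm:whiteproduct} applies. I would also remark that parts~(\mref{it:leib}) and~(\mref{it:dualprepois}) recover results of Vallette~\mcite{Va} and of Uchino~\mcite{Uc} respectively, so the corollary is primarily an illustration that these known white products are instances of replicating operations.
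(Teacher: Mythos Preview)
Your proposal is correct and follows exactly the paper's approach: the paper simply says ``By taking replicators of suitable operads $\calp$, we immediately get'' the corollary, which is precisely your strategy of combining Theorem~\mref{thm:whiteproduct} with the replicator computations from Sections~\mref{repnsopd} and~\mref{ss:exam}. Your additional care in verifying $R\neq 0$ for each operad and noting the passage from nonsymmetric to symmetric for $\ass$ is more explicit than the paper itself.
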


By a similar argument as for Theorem ~\mref{thm:whiteproduct} we obtain
\begin{prop}
Let $\calp=\mathcal{T}_{ns}(V)/(R)$ be a binary quadratic nonsymmetric operad with $R\neq 0$. There is an isomorphism of nonsymmetric operads
$$ \du(\calp) \cong {\it Dias } \ \square  \ \calp \ , \quad \tdu(\calp) \cong {\it Trias} \ \square \  \calp \ ,$$
where $\square$ denotes the white square product~\mcite{Va} while $Dias$ and $Trias$ denote the nonsymmetric operads for the diassociative and triassociative algebras.
\end{prop}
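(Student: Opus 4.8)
The plan is to run the argument of Theorem~\ref{thm:whiteproduct} in the nonsymmetric setting, with symmetric operads and the Manin white product $\bigcirc$ replaced throughout by nonsymmetric operads and the white square product $\square$. The ingredients are: the nonsymmetric analogues of Theorems~\ref{thm:dusu} and~\ref{thm:tdutsu}, i.e. $\du(\calp)^{!}\cong\bsu(\calp^{!})$ and $\tdu(\calp)^{!}\cong\tsu(\calp^{!})$ for a binary quadratic nonsymmetric operad $\calp=\mathcal{T}_{ns}(V)/(R)$ with $R\neq 0$; the nonsymmetric counterparts $\bsu(\calq)\cong\dend\bullet\calq$ and $\tsu(\calq)\cong\tridend\bullet\calq$ of the black-product identifications of~\mcite{BBGN}, where $\bullet$ is the nonsymmetric black square product, together with the duality $(\calq\bullet\calr)^{!}\cong\calq^{!}\,\square\,\calr^{!}$~\mcite{Va}; and the Koszul dualities $\dend^{!}\cong\mathit{Dias}$~\mcite{Lo2} and $\tridend^{!}\cong\mathit{Trias}$~\cite[Theorem~3.1]{LR}. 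Granting these and the fact that Koszul duality is an involution on binary quadratic operads, one obtains
$$\du(\calp)\cong\bigl(\du(\calp)^{!}\bigr)^{!}\cong\bsu(\calp^{!})^{!}\cong(\dend\bullet\calp^{!})^{!}\cong\dend^{!}\,\square\,\calp^{!!}\cong\mathit{Dias}\,\square\,\calp,$$
and, in the same way with $\tdu,\tsu,\tridend,\mathit{Trias}$ in place of $\du,\bsu,\dend,\mathit{Dias}$, the isomorphism $\tdu(\calp)\cong\mathit{Trias}\,\square\,\calp$.

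Thus the substance of the proof is the nonsymmetric version of Theorem~\ref{thm:dusu}, the version of Theorem~\ref{thm:tdutsu} being entirely parallel. One transcribes the proof of Theorem~\ref{thm:dusu} with the obvious modifications: $\mathcal{T}_{ns}(V)(3)=(V\circ_{\mathrm{I}}V)\oplus(V\circ_{\mathrm{II}}V)$ is now a direct sum of only two copies, indexed by the two planar binary trees with three leaves; $V$ carries no $\BS_{2}$-action; and the Koszul pairing on $\mathcal{T}_{ns}(V^{\vee})(3)\otimes\mathcal{T}_{ns}(V)(3)$ is the standard nonsymmetric one, with the customary sign attached to the two tree types. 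One then needs the nonsymmetric counterparts of Proposition~\ref{pp:generic} (a basis of $V$ making every coordinate projection of $(R)$ onto $\bfk\,e_{i}\circ_{u}e_{j}$ surjective), of Lemma~\ref{lem:genbas}, and of Lemma~\ref{lem:smod} (which in the nonsymmetric case is immediate, no $\BS$-module closure being required). Since the operators $\du_{x},\du_{y},\du_{z}$ (Proposition~\ref{reppath}) and $\bsu_{x},\bsu_{y},\bsu_{z}$ (Definition~\ref{de:bsu}) are already defined on planar trees, once the generic basis is available the same linear-algebra bookkeeping as in the proof of Theorem~\ref{thm:dusu} yields the equality $(\du(R)^{\perp})=(\bsu(R^{\perp}))$ of relation spaces, and hence $\du(\calp)^{!}\cong\bsu(\calp^{!})$; the ``only if'' part is again a dimension count, since for $R=0$ a spanning set of $\bsu(\mathcal{T}_{ns}(V^{\vee})(3))$ has at most $6n^{2}$ elements while $\dim\mathcal{T}_{ns}(\bsu(V^{\vee}))(3)=2(2n)^{2}=8n^{2}$.

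The main obstacle is precisely the nonsymmetric analogue of Proposition~\ref{pp:generic}. Its proof in the symmetric case makes essential use of the cyclic action of $(123)\in\BS_{3}$, which rotates the three copies $V\circ_{\mathrm{I}}V,\ V\circ_{\mathrm{II}}V,\ V\circ_{\mathrm{III}}V$ into one another and so propagates the surjectivity of a single coordinate projection to all of them; for nonsymmetric operads there is no such symmetry, and a nonzero $R$ need not have nonzero image under the projections onto both $V\circ_{\mathrm{I}}V$ and $V\circ_{\mathrm{II}}V$. I would therefore run the determinant--genericity argument separately within each of the two tree-type components, using those generators $f_{k}$ whose component there is nonzero, and treat the remaining case --- in which the relations of $\calp$ all lie in a single planar tree type --- by comparing $\du(R)$ directly with the defining relations of $\mathit{Dias}\,\square\,\calp$; equivalently, one checks that in the steps of the proof of Theorem~\ref{thm:dusu} where Proposition~\ref{pp:generic} is invoked, surjectivity of $p_{i,j,u}$ is needed only for those indices $(i,j,u)$ for which the $u$-component of $(R)$ does not vanish. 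Once this point is handled, the rest of the argument is a routine transcription of the symmetric proofs.
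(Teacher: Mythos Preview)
Your approach is exactly what the paper intends: its entire proof is the one-line remark ``By a similar argument as for Theorem~\ref{thm:whiteproduct}'', i.e. the chain
$\du(\calp)\cong(\bsu(\calp^{!}))^{!}\cong(\dend\bullet\calp^{!})^{!}\cong\dend^{!}\,\square\,\calp\cong\mathit{Dias}\,\square\,\calp$
(and similarly for $\tdu$), with the symmetric ingredients replaced by their nonsymmetric counterparts. You have in fact been more careful than the paper, correctly flagging that the nonsymmetric analogue of Proposition~\ref{pp:generic} does not go through verbatim because the $(123)$-rotation between the tree types $\mathrm{I},\mathrm{II},\mathrm{III}$ is unavailable; the paper does not address this point at all.
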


\section{Replicators and Average operators on operads}
\mlabel{sec:rb}
In this section we establish the relationship between the duplicator and triplicator of an operad on one hand and the actions of the di-average and tri-average operators on the operad on the other hand. We will work with symmetric operads, but all the results also hold for nonsymmetric operads.

\subsection{Duplicators and di-average operators}
Averaging operators have been studied for associative algebras since 1960 by Rota and for other algebraic structures more recently~\mcite{Ag2,Cao,R1,Uc}.

\begin{defn}
{\rm
Let $(A,\cdot)$ be a $\bfk$-module $A$ with a binary operation $\cdot$.
\begin{enumerate}
\item
A {\bf di-average operator} on $A$ is a $\bfk$-linear map $P: A \longrightarrow A$ such that
\begin{eqnarray}
P(x \cdot P(y)) &=& P(x) \cdot P(y) = P(P(x) \cdot y), \quad \text{ for all } x, y\in A.\mlabel{eq:diav}
\end{eqnarray}
\item
Let $\lambda \in \bfk$. A tri-average operator of weight $\lambda$ on $A$ is a $\bfk$-linear map $P: A \longrightarrow A$ such that Eq.~(\mref{eq:diav}) holds and
\begin{eqnarray}
P(x)\cdot P(y) &=&\lambda P(xy),
\quad \text{ for all } x, y\in A.\mlabel{eq:triav} \end{eqnarray}
\end{enumerate}
}
\end{defn}
We note that a tri-average operator of weight zero is not a di-average operator. So we cannot give a uniform definition of the average operators as in the case of  Rota-Baxter algebras of weight $\lambda$.

We next consider the operation of average operators on the level of operads.
\begin{defn}\mlabel{defn:av}
{\rm Let $\gensp=\gensp(2)$ be an $\BS$-module concentrated in arity 2.
\begin{enumerate}
\item
Let $\bvp$ denote the $\BS$-module concentrated in arity 1 and arity 2 with $\bvp(2)=V$ and $\bvp(1)=\bfk\,P$, where $P$ is a symbol. Let $\mathcal{T}(\bvp)$ be the free operad generated by binary operations $\gensp$ and an unary operation $P\neq \id$.
\item
Define $\du(\gensp)=\gensp \otimes (\bfk \dashv \oplus \,\bfk \vdash)$ as in Eq.~(\mref{eq:tsp}), regarded as an $\BS$-module concentrated in arity 2. Define a linear map of graded vector spaces from $\du(V)$ to $\bvp$ by the following correspondence:
$$\xi:\quad \svec{\gop}{\dashv}\mapsto \gop\circ({\rm id}\otimes P),\quad \svec{\gop}{\vdash}\mapsto \gop\circ (P\otimes {\rm id}), \text{ for all } \gop\in \gensp,$$
where $\circ$ is the operadic composition. By the universality of the free operad, $\xi$ induces a homomorphism of operads that we still denote by $\xi$:
$$\xi:\mathcal{T}(\su(V))\to \mathcal{T}(\bvp).$$
\item
Let $\calp=\mathcal{T}(\gensp)/(R_\calp)$ be a binary operad defined by generating operations $\gensp$ and
relations $R_\calp$.
Let
$$\da_{\mathcal{P}}:=\{\gop\circ(P\otimes P)-P\circ\gop\circ(P\otimes {\rm id}), \gop\circ(P\otimes P)- P\circ\gop\circ({\rm id}\otimes P)\ |\ \gop\in \gensp\}.$$
Define the {\bf operad of di-average $\calp$-algebras}
by
$$\da(\calp):=\mathcal{T}(\bvp)/( {\mathrm R}_\calp,{\mathrm DA}_{\calp}).$$
Let $p_1:\mathcal{T}(\bvp)\to
\da(\calp)$ denote the operadic projection.
\end{enumerate}
}
\end{defn}

\begin{theorem}
\begin{enumerate}
\item Let $\calp$ be a binary operad. There is a morphism of operads
$$
\du(\calp) \longrightarrow \da(\mathcal{P}),
$$
which extends the map $\xi$ given in  Definition~\ref{defn:av}.
\item Let $A$ be a $\mathcal{P}$-algebra. Let $P:A\to A$ be a di-average operator. Then the following operations make $A$ into a
$\du(\opd)$-algebra:
$$x\dashv_jy:=x\circ_jP(y),\quad x\vdash_jy:=P(x)\circ_jy,\quad \forall \circ_j\in\mathcal{P}(2), \text{ for all } x,y\in A.$$
\end{enumerate}
\mlabel{thm:biav}
\end{theorem}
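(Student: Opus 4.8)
The plan is to prove Part~(a) first and then obtain Part~(b) from it. Recall that $\du(\calp)=\mathcal{T}(\du(V))/(\du(R))$ while $\da(\calp)=\mathcal{T}(\bvp)/(R_{\calp},\da_{\calp})$, and that the map $\xi$ of Definition~\mref{defn:av} is already a morphism of operads $\mathcal{T}(\du(V))\to\mathcal{T}(\bvp)$, coming by the universal property of the free operad from the linear map $\svec{\gop}{\dashv}\mapsto\gop\circ(\id\otimes P)$, $\svec{\gop}{\vdash}\mapsto\gop\circ(P\otimes\id)$. First I would note that this underlying map is $\BS_2$-equivariant: interchanging the two inputs of $\gop\circ(\id\otimes P)$ produces $\gop^{(12)}\circ(P\otimes\id)$, which matches the $\BS_2$-action on $\du(V)$ given in Definition~\mref{rule}. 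Hence $\xi$ is a morphism of symmetric operads, and to produce the desired operad morphism $\du(\calp)\to\da(\calp)$ extending it (i.e.\ compatible with the projections $p_{\du(V)}$ and $p_1$), it suffices to show that $\xi(\du(R))$ lies in the operadic ideal $(R_{\calp},\da_{\calp})$ of $\mathcal{T}(\bvp)$; the induced morphism is then automatically unique.

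The crux is the following claim. For $\tau\in\calt(\genbas)$ and $x\in\lin(\tau)$, write $P\cdot_x\tau\in\mathcal{T}(\bvp)$ for the labeled tree obtained from $\tau$ by composing every leaf input other than $x$ with $P$. Then $\xi(w)\equiv P\cdot_x\tau\pmod{(\da_{\calp})}$ for every $w\in\du_x(\tau)$; in particular $\xi(\du_x(\tau))$ is a single class modulo $(\da_{\calp})$, independent of the resolutions of the symbols $\svec{\gop}{\rep}$. To prove this I would use Proposition~\mref{reppath}: in $w$ every vertex on the path from the root to $x$ carries $\svec{\gop}{\dashv}$ or $\svec{\gop}{\vdash}$ according as that path turns left or right there, and every off-path vertex carries one of $\svec{\gop}{\dashv},\svec{\gop}{\vdash}$; under $\xi$ an on-path vertex $\gop$ acquires a $P$ on the input \emph{not} leading to $x$, that is, on the output of the maximal subtree $T$ hanging off the path at $\gop$, and such a subtree has only off-path vertices. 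It therefore suffices to prove, by induction on the number of vertices, the sublemma: if every vertex of a subtree $T$ carries $\svec{\gop}{\dashv}$ or $\svec{\gop}{\vdash}$, then $P\circ\xi(T)$ is congruent modulo $(\da_{\calp})$ to the tree obtained from $T$ by composing each of its leaves with $P$. The base case ($T$ a single leaf) is trivial; if $T=T_\ell\vee_{\gop}T_r$ with label $\svec{\gop}{\dashv}$ at $\gop$, then $\xi(T)=\gop\circ\big(\xi(T_\ell)\otimes(P\circ\xi(T_r))\big)$, and the relation $\gop\circ(P\otimes P)-P\circ\gop\circ(\id\otimes P)\in\da_{\calp}$ gives $P\circ\xi(T)\equiv\gop\circ\big((P\circ\xi(T_\ell))\otimes(P\circ\xi(T_r))\big)$, whereupon the inductive hypothesis on $T_\ell$ and $T_r$ closes this case; the case of label $\svec{\gop}{\vdash}$ is symmetric, using the other generator of $\da_{\calp}$. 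Applying the sublemma to each subtree hanging off the path and reassembling along the path—where no $P$ ever sits on the output of an on-path vertex and the leaf $x$ is never composed with $P$—yields $\xi(w)\equiv P\cdot_x\tau$.

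Part~(a) then follows quickly: for an element $r=\sum_ic_i\tau_i$ of $R$ and $x\in\lin(r)$ one has $\du_x(r)=\sum_ic_i\du_x(\tau_i)$, hence $\xi(\du_x(r))\equiv\sum_ic_i\,(P\cdot_x\tau_i)=P\cdot_x r\pmod{(\da_{\calp})}$. But $P\cdot_x r$ is obtained from $r$, an element of $R_{\calp}\subseteq\mathcal{T}(V)\subseteq\mathcal{T}(\bvp)$, by operadic composition with copies of $P$, so it lies in the operadic ideal generated by $R_{\calp}$. Therefore $\xi(\du_x(r))\subseteq(R_{\calp},\da_{\calp})$ for all $x\in\lin(r)$ and all $r\in R$, i.e.\ $\xi(\du(R))\subseteq(R_{\calp},\da_{\calp})$, as required.

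For Part~(b), the $\calp$-algebra structure on $A$ together with the linear map $P$ assembles, by the universal property of the free operad, into an operad morphism $\mathcal{T}(\bvp)\to\End_A$ sending each $\gop\in V$ to the corresponding operation and the arity-one generator $P$ to $P$. Since $A$ is a $\calp$-algebra it annihilates $R_{\calp}$, and since $P$ is a di-average operator—Eq.~(\mref{eq:diav}) for each operation $\circ_j$ of $\calp$, hence for all of $\calp(2)$ by linearity—it annihilates $\da_{\calp}$; thus the morphism factors through $\da(\calp)$, so $A$ is a $\da(\calp)$-algebra. Composing with the morphism $\du(\calp)\to\da(\calp)$ from Part~(a) makes $A$ a $\du(\calp)$-algebra, and tracing $\svec{\circ_j}{\dashv}$ and $\svec{\circ_j}{\vdash}$ through $\xi$ into $\End_A$ shows its operations are precisely $x\dashv_jy=x\circ_jP(y)$ and $x\vdash_jy=P(x)\circ_jy$. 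I expect the main obstacle to be the bookkeeping in the claim of the second paragraph—verifying that all resolutions of the $\svec{\gop}{\rep}$ symbols collapse to the same class modulo $(\da_{\calp})$ and that this class is the transparent element $P\cdot_x\tau$; the inductive ``push $P$ down to the leaves'' argument is the real content, everything else being formal manipulation with free operads and their quotients.
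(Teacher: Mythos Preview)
Your proposal is correct and follows essentially the same strategy as the paper. The paper defers to the parallel tri-average case (Theorem~\mref{thm:triav}), whose heart is Lemma~\mref{lem:txidu}: part~(\mref{it:tpxi}) is exactly your ``sublemma'' (pushing $P$ through to the leaves by induction on the tree), and part~(\mref{it:txidu}) is your claim $\xi(w)\equiv P\cdot_x\tau$. The only organizational difference is that the paper proves both parts by separate inductions on $|\lin(\tau)|$ using the recursive decomposition $\tau=\tau_\ell\vee_\gop\tau_r$, whereas you prove the sublemma inductively and then invoke the explicit path description of Proposition~\mref{reppath} to apply it to each off-path subtree directly; the content is the same. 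Your derivation of Part~(b) from Part~(a) via the factorization $\mathcal{T}(\bvp)\to\da(\calp)\to\End_A$ is also what the paper means by ``the second statement is just the interpretation of the first statement on the level of algebras.''
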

The proof is parallel to the case of triplicators in Theorem~\mref{thm:triav} for which we will prove in full detail.

When we take $\mathcal{P}$ be the operad of the associative algebra, Lie algebra or Poisson algebra,
we obtain the following results of Aguiar \mcite{Ag2}.

\begin{coro}
\begin{enumerate}
\item Let $(A,\cdot)$ be an associative algebra and $P:A \longrightarrow A$ be a di-averaging operator. Define two new operations on $A$ by
    $
    x \vdash y = P(x)\cdot y ~\mbox{and}~x \dashv y = x \cdot P(y).
    $
    Then $(A,\vdash,\dashv)$ is an associative dialgebra.
\item Let $(A,[,])$ be a Lie algebra and $P:A \longrightarrow A$ be a di-averaging operator. Define a new operation on $A$ by $
    \{x,y\} = [P(x),y].$
Then $(A,\{,\})$ is a left Leibniz algebra.
\item Let $(A,\cdot,[,])$ be a Poisson algebra and let $P:A\to A$ be a di-averaging operator. Define two new products on $A$ by
$x\circ y:=P(x)\cdot y,~\mbox{and}~ \{x,y\}:=[P(x),y].$
Then $(A,\circ,\{,\})$ is a dual left prePoisson algebra.
\end{enumerate}
\end{coro}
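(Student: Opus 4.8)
The plan is to read all three assertions off Theorem~\ref{thm:biav}(2) by specializing the binary operad $\calp$ to $\ass$, $\lie$ and $Pois$ in turn, and then identifying the resulting $\du(\calp)$-algebra with the classical structure in question using the computations of duplicators already carried out in Sections~\ref{repnsopd} and~\ref{ss:exam}. In other words, the corollary is pure unwinding of definitions: Theorem~\ref{thm:biav}(2) tells us that a di-average operator $P$ turns a $\calp$-algebra into a $\du(\calp)$-algebra with operations $x\dashv_j y=x\circ_j P(y)$ and $x\vdash_j y=P(x)\circ_j y$, and it remains only to recognize $\du(\calp)$ in each case.

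For part~(a) I would apply Theorem~\ref{thm:biav}(2) with $\calp=\ass$, whose unique binary generator is the associative product $\cdot$. The theorem equips $A$ with a $\du(\ass)$-algebra structure whose two operations are exactly $x\dashv y=x\cdot P(y)$ and $x\vdash y=P(x)\cdot y$. Since $\du(\ass)=\diass$, the operad of Loday's associative dialgebra (Section~\ref{repnsopd}), and under this identification the generators $\svec{\cdot}{\dashv}$, $\svec{\cdot}{\vdash}$ correspond to the dialgebra products $\dashv$, $\vdash$, the five relations defining $\diass$ become the five associative dialgebra axioms for $(A,\vdash,\dashv)$. For part~(b) I would take $\calp=\lie$, with generator the bracket $[\,,\,]$; Theorem~\ref{thm:biav}(2) then yields operations $x\dashv y=[x,P(y)]$ and $x\vdash y=[P(x),y]$, and by Proposition~\ref{pp:leib} we have $\du(\lie)=\leib$. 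As in the proof of that proposition, skew-symmetry of the generator forces $\dashv=-\vdash^{(12)}$, so the $\dashv$-operation carries no independent information and the single Leibniz bracket is recovered as $\{x,y\}:=x\vdash y=[P(x),y]$; hence $(A,\{\,,\})$ is a left Leibniz algebra. For part~(c) I would take $\calp=Pois$, whose binary generators are the commutative product $\cdot$ and the bracket $[\,,\,]$; Theorem~\ref{thm:biav}(2) produces the operations $x\circ y:=P(x)\cdot y$ and $\{x,y\}:=[P(x),y]$ (the $\vdash$-operations attached to $\cdot$ and $[\,,\,]$), together with the corresponding $\dashv$-operations, which are again determined by $\circ$ and $\{\,,\}$ through the symmetry of $\cdot$ and the skew-symmetry of $[\,,\,]$. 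Since $\du(Pois)={\it DualPrePois}$ by Proposition~\ref{pp:prepois}, the pair $(A,\circ,\{\,,\})$ satisfies the defining relations of a dual left pre-Poisson algebra.

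The only step that requires attention — and the sole opportunity for a sign or transposition error — is the bookkeeping of conventions: one must verify that the labelling $\dashv\leftrightarrow\svec{\gop}{\dashv}$ and $\vdash\leftrightarrow\svec{\gop}{\vdash}$ used in Definition~\ref{rule} matches the assignment $x\dashv_j y=x\circ_j P(y)$, $x\vdash_j y=P(x)\circ_j y$ of Theorem~\ref{thm:biav}(2), and that when a generator of $\calp$ is symmetric or skew-symmetric the two replicated operations collapse to the single operation appearing in the classical definitions. Once these identifications are pinned down, the relations of the target structures are literally the relations of $\diass$, $\leib$ and ${\it DualPrePois}$ computed in Sections~\ref{repnsopd} and~\ref{ss:exam}, so no additional computation is needed.
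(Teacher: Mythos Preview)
Your proposal is correct and follows exactly the paper's approach: the paper simply states that the corollary is obtained by taking $\calp$ to be $\ass$, $\lie$ or $Pois$ in Theorem~\ref{thm:biav}, relying on the identifications $\du(\ass)=\diass$, $\du(\lie)=\leib$ and $\du(Pois)={\it DualPrePois}$ established earlier. If anything, your write-up is more detailed than the paper's, which gives no proof beyond the one-line remark preceding the corollary.
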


Combining Theorem~\mref{thm:biav} with Theorem~\mref{thm:whiteproduct}, we obtain the following relation between the Manin white product and the action of the di-average operator. It can be regarded as the interpretation of \cite[Theorem 3.2]{Uc} at the level of operads.
\begin{prop}\mlabel{diavprop}
For any binary quadratic operad $\mathcal{P} = \mathcal{T}(V)/(R)$, there is a morphism of operads
$$
\Perm \bigcirc \mathcal{P} \longrightarrow \da(\calp),
$$
defined by the following map
\begin{eqnarray*}
\Perm(2) \bigcirc \mathcal{P}(2) &\longrightarrow& \da(\calp),\\
\mu \otimes \omega &\longmapsto& \omega \circ (id \otimes P),\\
\mu' \otimes \omega &\longmapsto& \omega \circ (P \otimes id), \quad \omega \in \mathcal{P}(2),
\end{eqnarray*}
where $\mu$ denotes the generating operation of the operad $\Perm$.
\end{prop}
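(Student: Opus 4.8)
The plan is to obtain the asserted morphism as the composite of two maps already established in the paper. By Theorem~\ref{thm:whiteproduct} there is an isomorphism of operads $\Theta\colon \Perm\bigcirc\calp \xrightarrow{\ \cong\ } \du(\calp)$ (valid once $R\neq 0$), and by Theorem~\ref{thm:biav}(1) there is a morphism of operads $\Xi\colon \du(\calp)\to\da(\calp)$ extending the map $\xi$ of Definition~\ref{defn:av}, so that $\Xi(\svec{\omega}{\dashv})=\omega\circ(\id\otimes P)$ and $\Xi(\svec{\omega}{\vdash})=\omega\circ(P\otimes\id)$ for every $\omega\in V$. The map in the statement will then be $\Xi\circ\Theta$; being a composite of operad morphisms it is automatically an operad morphism, so the only real work is to identify it on generators.

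For that identification I would unwind the construction of $\Theta$ used in the proof of Theorem~\ref{thm:whiteproduct}, namely the chain $\du(\calp)\cong(\bsu(\calp^!))^!\cong(\Prelie\bullet\calp^!)^!\cong\Prelie^!\bigcirc\calp\cong\Perm\bigcirc\calp$, together with the generator dictionaries for bisuccessors and black products from~\cite{BBGN} and the identification $\Prelie^!\cong\Perm$, to see that $\Theta$ matches $\mu\otimes\omega$ with $\svec{\omega}{\dashv}$ and $\mu'\otimes\omega$ with $\svec{\omega}{\vdash}$ for $\omega\in V$, where $\mu$ is one of the two generators of $\Perm(2)\cong\bfk[\BS_2]$; this is exactly the convention for $\Perm$ used in the statement (fixed up to relabelling $\mu\leftrightarrow\mu'$). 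Composing with $\Xi$ then yields $\mu\otimes\omega\mapsto\omega\circ(\id\otimes P)$ and $\mu'\otimes\omega\mapsto\omega\circ(P\otimes\id)$, precisely the displayed map, which settles the case $R\neq 0$.

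It remains to treat the degenerate case $R=0$, where Theorem~\ref{thm:whiteproduct} does not apply (in fact $\du(\mathcal{T}(V))$ and $\Perm\bigcirc\mathcal{T}(V)$ need not be isomorphic then). Here I would construct the morphism by hand from the universal property of the Manin white product: by Definition~\ref{de:white} the relation space of $\Perm\bigcirc\mathcal{T}(V)$ is generated by $\Phi^{-1}(R_{\Perm}\otimes\mathcal{T}(V))$ alone, and one checks that the operad morphism $\mathcal{T}(\Perm(2)\otimes V)\to\mathcal{T}(\bvp)\to\da(\mathcal{T}(V))$ prescribed on generators by the displayed formula annihilates these arity-$3$ elements — a direct computation resting only on the di-average identity Eq.~(\ref{eq:diav}). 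I expect the genuine obstacle to be exactly this last point: since the two cited theorems cover only $R\neq 0$, the free case must be handled from scratch, and one has to verify directly that the prescribed assignment on generators respects the defining relations of $\Perm\bigcirc\mathcal{T}(V)$. The orientation bookkeeping of the second paragraph (matching $\dashv,\vdash$, equivalently $\mu,\mu'$, across $\Theta$ and $\xi$) is routine once the construction of $\Theta$ in Theorem~\ref{thm:whiteproduct} is made explicit.
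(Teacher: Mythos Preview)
Your approach is essentially the same as the paper's: the paper simply states (in the sentence preceding the proposition) that the result follows by combining Theorem~\ref{thm:whiteproduct} with Theorem~\ref{thm:biav}, i.e., by composing the isomorphism $\Perm\bigcirc\calp\cong\du(\calp)$ with the morphism $\du(\calp)\to\da(\calp)$, and gives no further details. You are in fact more careful than the paper in two respects: you spell out why the composite sends $\mu\otimes\omega$ and $\mu'\otimes\omega$ to the stated elements, and you correctly flag that Theorem~\ref{thm:whiteproduct} requires $R\neq 0$, so that the free case would need a direct verification---a caveat the paper silently ignores.
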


\subsection{Triplicators and tri-average operators}
\mlabel{ss:avtd}
In this section, we establish the relationship between the triplicator of an operad and the action of the tri-average operator with a nonzero weight on the operad. For simplicity, we assume that the weight of the tri-average operator is one.

\begin{defn}\mlabel{ta}
{\rm Let $\gensp=\gensp(2), \bvp$ and $\mathcal{T}(\bvp)$ as defined in Definition~\mref{defn:av}.
\begin{enumerate}
\item
Let $\tdu(\gensp)=\gensp \otimes (\bfk \dashv \oplus\,\bfk \vdash\oplus\,\bfk \perp)$ in Eq.~(\mref{eq:ttsp}), seen as an $\BS$-module concentrated in arity 2. Define a linear map of graded vector spaces from $\tdu(V)$ to $\bvp$ by the correspondence
$$
\eta:\quad \svec{\gop}{\dashv}\mapsto \gop\circ({\rm id}\otimes P),\quad \svec{\gop}{\vdash}\mapsto \gop\circ (P\otimes {\rm id}),\quad \svec{\gop}{\cdot}\mapsto \gop,
$$
where $\circ$ is the operadic composition. By the universality of the free operad, $\eta$ induces a homomorphism of operads:
$$
\eta :\mathcal{T}(\tdu(V))\to \mathcal{T}(\bvp).
$$
\item
Let $\calp=\mathcal{T}(\gensp)/(R_\calp)$ be a binary operad defined by generating operations $\gensp$ and
relations $R_\calp$.
Let
\begin{eqnarray*}
{\mathrm TA}_{\mathcal{P}}&:=&\{\gop\circ(P\otimes P)-P\circ\gop\circ(P\otimes {\rm id}), \gop\circ(P\otimes P)- P\circ\gop\circ({\rm id}\otimes P),\\
&&\gop\circ(P\otimes P)- P\circ\gop~|~ \gop\in \gensp\}.
\end{eqnarray*}
Define the {\bf operad of tri-average $\calp$-algebras of weight one} by
$${\mathrm TA}(\calp):=\mathcal{T}(\bvp)/({\mathrm R}_\calp,{\mathrm TA}_{\calp}).$$
Let $p_1:\mathcal{T}(\bvp)\to {\mathrm TA}(\calp)$
denote the operadic projection.
\end{enumerate}
}
\end{defn}

We first prove a lemma relating triplicators and tri-average operators.

\begin{lemma}
Let $\calp=\mathcal{T}(\gensp)/(R_\calp)$ be a binary operad and
let $\tau\in \calt(\gensp)$ with $\lin(\tau)$.
\begin{enumerate}
\item
For each $\bar{\tau} \in \tdu(\tau)$, we have
\begin{equation}
P\circ \eta(~\bar{\tau}~) \equiv \tau\circ P^{\ot n} \mod ({\mathrm R}_\calp, {\mathrm TA}_\calp).
\mlabel{eq:tpxi}
\end{equation}
\mlabel{it:tpxi}
\item
For $\varnothing \neq J\subseteq \lin(\tau)$, let $P^{\ot n,J}$ denote the $n$-th tensor power of $P$ but with the component from $J$ replaced by the identity map. So, for example, for the two inputs $x_1$ and $x_2$ of $P^{\ot 2}$, we have $P^{\ot 2, \{x_1\}}=P\ot {\rm id}$ and $P^{\ot 2, \{x_1,x_2\}}={\rm id}\ot {\rm id}$. Then for each $\bar{\tau}_{J} \in \tdu_{J}(\tau)$, we have
\begin{equation}
\eta(\bar{\tau}_{J}) \equiv \tau \circ (P^{\ot n, J}) \mod ({\mathrm R}_\calp, {\mathrm TA}_\calp)\,.
\mlabel{eq:txidu}
\end{equation}
\label{it:txidu}
\end{enumerate}
\label{lem:txidu}
\end{lemma}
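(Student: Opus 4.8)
The plan is to prove the two congruences by induction on $n:=|\lin(\tau)|$, establishing \eqref{eq:tpxi} first as a self-contained induction and then deducing \eqref{eq:txidu} from it. Two facts are used throughout. First, $\eta\colon\mathcal{T}(\tdu(\gensp))\to\mathcal{T}(\bvp)$ is a morphism of operads, so it carries the grafting decomposition $\bar{\tau}=\bar{\sigma}_\ell\vee_{\svec{\gop}{u}}\bar{\sigma}_r$ of a labeled tree to the operadic composite of $\eta(\bar{\sigma}_\ell),\eta(\bar{\sigma}_r)$ along $\eta(\svec{\gop}{u})$; since $\eta(\svec{\gop}{\dashv})=\gop\circ(\id\ot P)$, $\eta(\svec{\gop}{\vdash})=\gop\circ(P\ot\id)$ and $\eta(\svec{\gop}{\perp})=\gop$, associativity of the operad composition gives $\eta(\bar{\tau})$ equal to $\gop\circ(\eta(\bar{\sigma}_\ell),\,P\circ\eta(\bar{\sigma}_r))$, $\gop\circ(P\circ\eta(\bar{\sigma}_\ell),\,\eta(\bar{\sigma}_r))$, $\gop\circ(\eta(\bar{\sigma}_\ell),\,\eta(\bar{\sigma}_r))$ in the three cases $u=\dashv,\vdash,\perp$. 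Second, since the congruences are taken modulo the operadic ideal $({\mathrm R}_\calp,{\mathrm TA}_\calp)$, congruent elements may be freely substituted into the slots of an operadic composite. The base case $n=1$ is immediate: $\tau$ is the trivial tree, $\tdu(\tau)=\tdu_\varnothing(\tau)$ and $\tdu_{\{x\}}(\tau)$ each consist only of the trivial tree, $\eta$ sends it to $\id$, and both sides of \eqref{eq:tpxi} equal $P$ while both sides of \eqref{eq:txidu} equal $\id$.

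For the inductive step of \eqref{eq:tpxi} I would write $\tau=\tau_\ell\vee_{\gop}\tau_r$ with $n_\ell+n_r=n$ and decompose $\bar{\tau}\in\tdu(\tau)$ as $\bar{\sigma}_\ell\vee_{\svec{\gop}{u}}\bar{\sigma}_r$ with $\bar{\sigma}_\ell\in\tdu_\varnothing(\tau_\ell)$, $\bar{\sigma}_r\in\tdu_\varnothing(\tau_r)$, $u\in\{\dashv,\vdash,\perp\}$. In each case one composes on the left with $P$ and applies the matching generator of ${\mathrm TA}_\calp$ --- namely $P\circ\gop\circ(\id\ot P)\equiv\gop\circ(P\ot P)$ when $u=\dashv$, $P\circ\gop\circ(P\ot\id)\equiv\gop\circ(P\ot P)$ when $u=\vdash$, and $P\circ\gop\equiv\gop\circ(P\ot P)$ when $u=\perp$ --- to rewrite $P\circ\eta(\bar{\tau})$ as $\gop\circ(P\circ\eta(\bar{\sigma}_\ell),\,P\circ\eta(\bar{\sigma}_r))$ modulo $({\mathrm R}_\calp,{\mathrm TA}_\calp)$. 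Applying the induction hypothesis to $\tau_\ell$ and $\tau_r$ turns this into $\gop\circ(\tau_\ell\circ P^{\ot n_\ell},\,\tau_r\circ P^{\ot n_r})=\tau\circ P^{\ot n}$, which completes \eqref{eq:tpxi}.

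For the inductive step of \eqref{eq:txidu} I would again write $\tau=\tau_\ell\vee_{\gop}\tau_r$ and split according to the recursion for $\tdu_J(\tau)$: since $J\neq\varnothing$, either $J\subseteq\lin(\tau_\ell)$, or $J\subseteq\lin(\tau_r)$, or $J$ meets both $\lin(\tau_\ell)$ and $\lin(\tau_r)$. If $J\subseteq\lin(\tau_\ell)$, the root of $\bar{\tau}_J$ is $\svec{\gop}{\dashv}$ with $\bar{\sigma}_\ell\in\tdu_J(\tau_\ell)$, $\bar{\sigma}_r\in\tdu_\varnothing(\tau_r)$, so applying \eqref{eq:txidu} inductively to $\tau_\ell$ and the now-proved \eqref{eq:tpxi} to $\tau_r$ gives $\eta(\bar{\tau}_J)=\gop\circ(\eta(\bar{\sigma}_\ell),\,P\circ\eta(\bar{\sigma}_r))\equiv\gop\circ(\tau_\ell\circ P^{\ot n_\ell,J},\,\tau_r\circ P^{\ot n_r})=\tau\circ P^{\ot n,J}$, using that $P^{\ot n,J}$ restricts to $P^{\ot n_\ell,J}$ and $P^{\ot n_r}$ on the two subtrees. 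The case $J\subseteq\lin(\tau_r)$ is symmetric (root $\svec{\gop}{\vdash}$, branches swapped). If $J$ meets both branches the root is $\svec{\gop}{\perp}$, so $\eta(\bar{\tau}_J)=\gop\circ(\eta(\bar{\sigma}_\ell),\,\eta(\bar{\sigma}_r))$ with $\bar{\sigma}_\ell\in\tdu_{J\cap\lin(\tau_\ell)}(\tau_\ell)$, $\bar{\sigma}_r\in\tdu_{J\cap\lin(\tau_r)}(\tau_r)$; here both $J\cap\lin(\tau_\ell)$ and $J\cap\lin(\tau_r)$ are nonempty, so \eqref{eq:txidu} applies to each branch and grafting back yields $\gop\circ(\tau_\ell\circ P^{\ot n_\ell,J\cap\lin(\tau_\ell)},\,\tau_r\circ P^{\ot n_r,J\cap\lin(\tau_r)})=\tau\circ P^{\ot n,J}$.

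The real content is concentrated in the three generators of ${\mathrm TA}_\calp$, which are exactly what is needed to push a factor $P$ through $\gop$ onto both of its inputs; the remainder is the bookkeeping of matching the operad composition with tree grafting, as dictated by Proposition~\ref{treppath} and the recursion for $\tdu_J$. I expect the main nuisance to be keeping the left/right conventions consistent --- a vertex labeled $\dashv$ corresponds under $\eta$ to the factor $P$ on the \emph{right} input, which is consistent with the rule that the marked leaves propagate into the left subtree at such a vertex --- and the one genuinely substantive point to verify is that in the ``$\perp$'' case of \eqref{eq:txidu} both $J\cap\lin(\tau_\ell)$ and $J\cap\lin(\tau_r)$ are nonempty, so that the induction hypothesis for \eqref{eq:txidu} is legitimately applicable to each branch.
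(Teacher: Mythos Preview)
Your proposal is correct and follows essentially the same approach as the paper's own proof: induction on $|\lin(\tau)|$, decomposing $\tau=\tau_\ell\vee_\gop\tau_r$ at the root, case-splitting on the root label $u\in\{\dashv,\vdash,\perp\}$, invoking the corresponding generator of ${\mathrm TA}_\calp$ to push $P$ onto both inputs, and then applying the induction hypothesis (together with part~(\ref{it:tpxi}) on the $\tdu_\varnothing$ branch in part~(\ref{it:txidu})). Your remarks on the left/right convention for $\dashv$ and on the nonemptiness of $J\cap\lin(\tau_\ell)$ and $J\cap\lin(\tau_r)$ in the $\perp$ case are exactly the bookkeeping points the paper handles implicitly.
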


\begin{proof}
(\mref{it:tpxi}).
We prove by induction on $|\lin(\tau)|\geq 1$. When $|\lin(\tau)|=1$, $\tau$ is the tree with one leaf standing for the identity map. Then we have $\eta(~ \tdu(\tau)~)=\tau$, $P\circ \eta(~\tdu(\tau)~)=P=\tau\circ P$. Assume the claim has been proved for $\tau$ with $|\lin(\tau)|=k$ and consider a $\tau$ with $|\lin(\tau)|=k+1$. Then from the decomposition $\tau=\tau_\ell\vee_{\gop} \tau_r$, we have
$\tdu(\tau)=\tdu(\tau_\ell) \vee_{\svec{\gop}{\dagger}} \tdu(\tau_r)$. Recall that $\tdu(\tau)$ is a set of labeled trees. For each $\bar{\tau} \in \tdu(\tau)$, there exist $\bar{\tau}_{\ell} \in \tdu(\tau)_{\ell}$ and $\bar{\tau}_{r} \in \tdu(\tau_{r})$ such that
$$
\bar{\tau} \in \left\{\bar{\tau}_{\ell} \vee_{\svec{\gop}{\vdash}} \bar{\tau}_{r},\bar{\tau}_{\ell} \vee_{\svec{\gop}{\dashv}} \bar{\tau}_{r},\bar{\tau}_{\ell} \vee_{\svec{\gop}{\cdot}} \bar{\tau}_{r} \right\}.
$$
If $\bar{\tau} = \bar{\tau}_{\ell} \vee_{\svec{\gop}{\vdash}} \bar{\tau}_{r}$, then we have
\begin{eqnarray*}
P\circ \eta(\bar{\tau}) &=& P\circ \eta(\bar{\tau}_{\ell} \vee_{\svec{\gop}{\vdash}} \bar{\tau}_{r}) \\
&=& P \circ \omega \circ ((P \circ \eta (\bar{\tau}_{\ell})) \otimes \eta(\bar{\tau}_{r}))\\
&\equiv& \omega \circ ((P \circ \eta (\bar{\tau}_{\ell})) \otimes (P \circ \eta (\bar{\tau}_{r}))) \mod ({\mathrm R}_\calp, {\mathrm TA}_\calp) \\
&=& \omega \circ ((\bar{\tau}_{\ell} \circ P^{\otimes |\lin(\tau_{\ell})|})  \otimes (\bar{\tau}_{r} \circ P^{\otimes |\lin(\tau_{r})|})) \quad \text{(by induction hypothesis)}\\
&=& \omega \circ (\bar{\tau}_{\ell} \otimes \bar{\tau}_{r}) \circ P^{\otimes (k+1)}\\
&=&(\bar{\tau}_{\ell} \vee_{\svec{\gop}{\vdash}} \bar{\tau}_{r}) \circ P^{\otimes (k+1)}\\
&=& \bar{\tau} \circ P^{\otimes (k+1)}.
\end{eqnarray*}
Similarly, we have
\begin{eqnarray*}
P\circ \eta(~\bar{\tau}_{\ell} \vee_{\svec{\gop}{\dashv}} \bar{\tau}_{r}~) \equiv \bar{\tau} \circ P^{\otimes (k+1)} \mod ({\mathrm R}_\calp, {\mathrm TA}_\calp),\\
P\circ \eta(~\bar{\tau}_{\ell} \vee_{\svec{\gop}{\cdot}} \bar{\tau}_{r}~) \equiv \bar{\tau} \circ P^{\otimes (k+1)} \mod ({\mathrm R}_\calp, {\mathrm TA}_\calp).
\end{eqnarray*}

\noindent
(\mref{it:txidu}). We again prove by induction on $|\lin(\tau)|$. When $|\lin(\tau)|=1$, then $x$ is the only leaf label of $\tau$ and $|\tdu_{x}(\tau)| = 1$. Thus we have
$$ \eta(\bar{\tau}_{x})=\eta(x)=x = \tau \circ (P^{\ot 1, x}).$$
Assume that the claim has been proved for all $\tau$ with $|\lin(\tau)|=k$ and consider $\tau$ with $|\lin(\tau)|=k+1$. Write $\tau=\tau_\ell \vee_\gop \tau_r$. Let $J$ be a nonempty subset of $\lin(\tau)$. If $J\subseteq  \lin(\tau_\ell)$, then by the definition of $\tdu_J(\tau)$, for each $\bar{\tau}_{J} \in \tdu_{J}(\tau)$, there exist $\bar{\tau}_{J,\ell} \in \tdu_{J}{\tau}_{\ell}$ and $\bar{\tau}_{J,r} \in \tdu_{\varnothing}{\tau}_{r}$ such that $\bar{\tau}_{J} = \bar{\tau}_{J,\ell}  \vee_{\svec{\gop}{\dashv}}   \bar{\tau}_{J,r}$. Then we have
{\allowdisplaybreaks
\begin{eqnarray*}
\eta(\bar{\tau}_{J})&=& \eta(\bar{\tau}_{J,\ell}  \vee_{\svec{\gop}{\dashv}}   \bar{\tau}_{J,r})\\
&=& \gop \circ (\eta(\bar{\tau}_{J,\ell} \ot P\circ \eta(\bar{\tau}_{J,r}))\\
&\equiv & \gop \circ \left ( (\tau_\ell \circ P^{\ot |\lin(\tau_\ell)|,J})\ot (\tau_r\circ P^{\ot |\lin(\tau_r)|})\right) \mod ({\mathrm R}_\calp,{\mathrm TA}_\calp)\\
 && \quad \quad \quad \text{(by induction hypothesis and Item~(\mref{it:tpxi}))} \\
&=& \tau \circ P^{\ot (k+1),J}.
\end{eqnarray*}
}
When $J\subseteq \lin(\tau_r)$, the proof is the same.
When $J\not\subseteq \lin(\tau_\ell)$ and $J\not\subseteq \lin(\tau_r)$, for each $\bar{\tau}_{J} \in \tdu_{J}(\tau)$, there exist $\bar{\tau}_{J,\ell} \in \tdu_{J\cap\lin(\tau_\ell)}{\tau}_{\ell}$ and $\bar{\tau}_{J,r} \in \tdu_{J\cap\lin(\tau_r)}{\tau}_{r}$ such that $\bar{\tau}_{J} = \bar{\tau}_{J,\ell}  \vee_{\svec{\gop}{\cdot}}  \bar{\tau}_{J,r}$. Then by the same argument we have
{\allowdisplaybreaks
\begin{eqnarray*}
\eta(\bar{\tau}_{J})
&\equiv & \gop \circ \left ( (\tau_\ell \circ P^{\ot |\lin(\tau_\ell)|,J\cap\lin(\tau_\ell)})\ot (\tau_r\circ P^{\ot |\lin(\tau_r)|,J\cap\lin(\tau_r)})\right) \mod ({\mathrm R}_\calp, {\mathrm TA}_\calp) \\
&=& \tau \circ P^{\ot (k+1),J}.
\end{eqnarray*}
}
This completes the induction.

\end{proof}

\begin{theorem}\mlabel{thm:triav}
Let $\calp$ be a binary operad.
\begin{enumerate}
\item There is a morphism of operads
$$
\tdu(\mathcal{P}) \longrightarrow {\mathrm TA}(\calp),
$$
which extends the map $\eta$ given in Definition \mref{ta}.
\item Let $A$ be a $\calp$-algebra. Let $P: A \longrightarrow A$ be a tri-average operator of weight one. Then the following operations make $A$ into a $\tdu(\mathcal{P})$-algebra:
    $$
    x \dashv_{j} y = x \circ_{j} P(y), \quad x \vdash_{j} y = P(x) \circ_{j} y, \quad x \cdot_{j} y = x \circ_{j} y, \quad \text{for all } \circ_{j} \in \calp(2).
    $$
    \mlabel{it:triavb}
\end{enumerate}
\end{theorem}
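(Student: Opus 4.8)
The plan is to deduce both statements from Lemma~\mref{lem:txidu}, which already carries the computational weight. For part (1), I first check that the map $\eta$ of Definition~\mref{ta} is $\BS_2$-equivariant on generators: from the $\BS_2$-action $\svec{\gop}{\dashv}^{(12)}=\svec{\gop^{(12)}}{\vdash}$, $\svec{\gop}{\vdash}^{(12)}=\svec{\gop^{(12)}}{\dashv}$, $\svec{\gop}{\perp}^{(12)}=\svec{\gop^{(12)}}{\perp}$ on $\tdu(\gensp)$, one computes $\big(\gop\circ(\mathrm{id}\otimes P)\big)^{(12)}=\gop^{(12)}\circ(P\otimes\mathrm{id})$ and of course $\gop^{(12)}=\gop^{(12)}$, so $\eta$ commutes with the transposition action on generators. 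Hence $\eta$ extends to a morphism of free operads $\eta\colon\mathcal{T}(\tdu(\gensp))\to\mathcal{T}(\bvp)$, and since the target ideal $({\mathrm R}_\calp,{\mathrm TA}_\calp)$ is an operadic ideal, to prove (1) it suffices to show that $\eta$ carries the generating set $\tdu(\relsp)$ into $({\mathrm R}_\calp,{\mathrm TA}_\calp)$.

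Next I take a typical generator of $\tdu(\relsp)$, namely an element of $\tdu_J(r_s)=\sum_i c_{s,i}\tdu_J(\tau_{s,i})$ for some $1\le s\le k$ and $\varnothing\neq J\subseteq\lin(r_s)$, which by the notation of Eq.~(\mref{eq:ssum}) has the form $\sum_i c_{s,i}\bar\tau_{s,i}$ with $\bar\tau_{s,i}\in\tdu_J(\tau_{s,i})$. Writing $n=|\lin(r_s)|$ (the same for all $i$ by local homogeneity of $r_s$), Lemma~\mref{lem:txidu}(\mref{it:txidu}) gives $\eta(\bar\tau_{s,i})\equiv\tau_{s,i}\circ P^{\ot n,J}\pmod{({\mathrm R}_\calp,{\mathrm TA}_\calp)}$ for every $i$ and every admissible choice of representative $\bar\tau_{s,i}$. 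Summing against the $c_{s,i}$ yields $\eta\big(\sum_i c_{s,i}\bar\tau_{s,i}\big)\equiv\big(\sum_i c_{s,i}\tau_{s,i}\big)\circ P^{\ot n,J}=r_s\circ P^{\ot n,J}$, and since $r_s\in\relsp$ lies in the operadic ideal generated by ${\mathrm R}_\calp$ inside $\mathcal{T}(\bvp)$ and such ideals absorb operadic composition (here the grafting of copies of $P$ and of the identity onto the leaves of $r_s$), the right-hand side lies in $({\mathrm R}_\calp,{\mathrm TA}_\calp)$. Thus $\eta$ descends to a morphism of operads $\tdu(\calp)\to{\mathrm TA}(\calp)$ extending $\eta$ on generators, which is part (1). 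The only point that needs care is that the congruence class of $\eta(\bar\tau_{s,i})$ is independent of the chosen representative $\bar\tau_{s,i}\in\tdu_J(\tau_{s,i})$, but this independence is exactly the assertion of Lemma~\mref{lem:txidu}(\mref{it:txidu}).

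For part (2), I observe that a $\calp$-algebra $A$ equipped with a tri-average operator $P$ of weight one is canonically a ${\mathrm TA}(\calp)$-algebra: the operations $\circ_j\in\calp(2)$ satisfy the relations ${\mathrm R}_\calp$ because $A$ is a $\calp$-algebra, while the two identities of Eq.~(\mref{eq:diav}) together with the weight-one identity Eq.~(\mref{eq:triav}) are precisely the evaluations on $A$ of the three families of generators of ${\mathrm TA}_\calp$. Equivalently, there is an operad morphism ${\mathrm TA}(\calp)\to\End_A$; precomposing it with the morphism $\tdu(\calp)\to{\mathrm TA}(\calp)$ from part (1) equips $A$ with a $\tdu(\calp)$-algebra structure. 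Tracing the generators $\svec{\circ_j}{\dashv},\svec{\circ_j}{\vdash},\svec{\circ_j}{\perp}$ of $\tdu(\calp)$, which map under $\eta$ to $\circ_j\circ(\mathrm{id}\otimes P)$, $\circ_j\circ(P\otimes\mathrm{id})$ and $\circ_j$ respectively, through this composite shows that the induced binary operations on $A$ are exactly $x\dashv_j y=x\circ_j P(y)$, $x\vdash_j y=P(x)\circ_j y$ and $x\cdot_j y=x\circ_j y$, as claimed. (Alternatively, one can argue directly, reading Lemma~\mref{lem:txidu}(\mref{it:txidu}) inside $\End_A$: evaluating any element of $\tdu(\relsp)$ on $A$ via these operations equals the evaluation of its $\eta$-image via $(\circ_j)$ and $P$, which vanishes because $A$ with $(\circ_j,P)$ satisfies $({\mathrm R}_\calp,{\mathrm TA}_\calp)$.) Beyond Lemma~\mref{lem:txidu} and the routine verification that the generators of ${\mathrm TA}_\calp$ evaluate to Eqs.~(\mref{eq:diav}) and~(\mref{eq:triav}), no real obstacle arises; the delicate bookkeeping is entirely absorbed into that lemma.
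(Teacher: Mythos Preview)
Your proof is correct and follows essentially the same approach as the paper: both reduce part~(1) to Lemma~\mref{lem:txidu}(\mref{it:txidu}) by showing that $\eta$ sends each generator $\sum_i c_{s,i}\bar\tau_{s,i}$ of $\tdu(R)$ to $r_s\circ P^{\ot n,J}$ modulo $({\mathrm R}_\calp,{\mathrm TA}_\calp)$, and both treat part~(2) as the algebra-level reinterpretation of part~(1). Your write-up is somewhat more detailed than the paper's, explicitly verifying the $\BS_2$-equivariance of $\eta$ on generators and spelling out the factorization $\tdu(\calp)\to{\mathrm TA}(\calp)\to\End_A$ for part~(2), whereas the paper simply asserts that the second statement is the interpretation of the first on the level of algebras.
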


\begin{proof}
The second statement is just the interpretation of the first statement on the level of algebras. So we just need to prove the first statement. Let $R_{\tdu(\calp)}$ be the relation space of $\tdu(\calp)$.
By definition, the relations of $\tdu(\calp)$ are generated by $\tdu_J(r)$ for locally homogeneous $r=\sum_i c_i \tau_i \in R_\calp$, where $\varnothing\neq J\subseteq \lin (\tau_i)$.
By Eqs.(\mref{eq:tpxi}) and (\mref{eq:txidu}), we have
$$
\eta \left (\sum_{i} c_i \bar{(\tau_{i})}_{J}\right) = \sum_i c_i \eta(\bar{(\tau_{i})}_{J}) \equiv \sum_i c_i \tau_i \circ P^{\ot n,J}
 \equiv \left (\sum_i c_i \tau_i \right) \circ P^{\ot n, J} \mod ({\mathrm R}_\calp, {\mathrm TA}_\calp).
$$
Hence $\eta(R_{\tdu(\calp)}) \subseteq ({\mathrm R}_\calp, {\mathrm TA}_\calp)$ and $\eta$ induces a morphism of operads
$$
\bar{\eta}: \tdu(\calp) \longrightarrow {\mathrm TA}(\calp).
$$
This proves the first statement.
\end{proof}

\begin{coro}
\begin{enumerate}
\item
Let $A$ be an associative algebra and let $P: A \longrightarrow A$ be a tri-average operator on $A$.  Then the new operations defined in Theorem~\mref{thm:triav}(\ref{it:triavb}) makes it into an associative trialgebra.
\item
Let $L$ be a Lie algebra and let $P: L \longrightarrow L$ be a tri-average operator on $L$. Then the operations defined in Theorem~\mref{thm:triav}(\ref{it:triavb}) make it into a triLeibniz algebra.
\end{enumerate}
\end{coro}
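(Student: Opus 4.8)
The plan is to obtain both parts as immediate specializations of Theorem~\mref{thm:triav}(\ref{it:triavb}), combined with two triplicator computations already carried out in the paper. Recall from Section~\mref{repnsopd} that the triplicator of $\ass$ is the operad $\mathit{Trias}$ of the triassociative algebra of Loday and Ronco, and from Proposition~\mref{prop:triLeib} that the triplicator of $\lie$ is the operad $\TriLeib$ of the triLeibniz algebra. Once these identifications are in hand there is nothing substantial left to do.

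For part (a), apply Theorem~\mref{thm:triav}(\ref{it:triavb}) with $\calp=\ass$. Given an associative algebra $(A,\cdot)$ and a tri-average operator $P$ of weight one on $A$ (the standing normalization of Section~\mref{ss:avtd}), the theorem equips $A$ with the structure of a $\tdu(\ass)$-algebra through the operations $x\dashv y:=x\cdot P(y)$, $x\vdash y:=P(x)\cdot y$ and $x\perp y:=x\cdot y$; since $\tdu(\ass)=\mathit{Trias}$, this is exactly an associative trialgebra. For part (b), apply the same theorem with $\calp=\lie$: for a Lie algebra $(L,[-,-])$ and a tri-average operator $P$ of weight one, the operations of Theorem~\mref{thm:triav}(\ref{it:triavb}) (explicitly, $x\dashv y=[x,P(y)]$, $x\vdash y=[P(x),y]$ and $x\perp y=[x,y]$) make $L$ into a $\tdu(\lie)$-algebra, and $\tdu(\lie)=\TriLeib$ by Proposition~\mref{prop:triLeib}, which under the substitutions $\dashv\leftrightarrow\diamond$ and $\perp\leftrightarrow[-,-]$ of Section~\mref{ss:exam} is the asserted triLeibniz algebra.

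Since the corollary is a pure specialization, there is no real obstacle; the only point requiring attention is bookkeeping --- keeping the weight-one normalization of Theorem~\mref{thm:triav} aligned with the convention here, and quoting the correct operads for $\tdu(\ass)$ and $\tdu(\lie)$. For a reader who prefers a self-contained check, one can bypass the operad-level statement and verify the defining relations of $\mathit{Trias}$ (respectively $\TriLeib$) directly from the tri-average identities $P(x\cdot P(y))=P(x)\cdot P(y)=P(P(x)\cdot y)$ and $P(x)\cdot P(y)=P(x\cdot y)$ of Eqs.~(\mref{eq:diav}) and~(\mref{eq:triav}); this mirrors the computation in the proof of Proposition~\mref{prop:asleib}, and the only relations that take a moment to see are the mixed ones linking the three operations, each of which reduces to a single application of the tri-average axioms.
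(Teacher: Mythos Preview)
Your argument is correct and is exactly the intended one: the paper does not give a separate proof of this corollary at all, treating it as an immediate specialization of Theorem~\mref{thm:triav}(\ref{it:triavb}) together with the identifications $\tdu(\ass)=\mathit{Trias}$ and $\tdu(\lie)=\TriLeib$ from Section~\mref{sec:conc}. Your write-up simply makes these two steps explicit, so there is no discrepancy to discuss.
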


\smallskip

\noindent {\bf Acknowledgements: } C. Bai would like to thank the support by NSFC (10920161, 11271202).
L. Guo acknowledges support from NSF grant DMS 1001855. The authors thank Kolesnikov for informing them the related papers~\mcite{GK2,Ko,KV}.

\end{document}